\documentclass[10pt]{amsart}
\setlength{\textwidth}{16cm}
\setlength{\textheight}{22cm}
\hoffset=-55pt

\usepackage{latexsym}
\usepackage{amsmath}
\usepackage{amssymb}
\usepackage{mathrsfs}
\usepackage{graphicx}
\usepackage{color}
\usepackage{pgfpages}
\usepackage{ifthen}
\usepackage{leftidx,tensor}
\usepackage[T1]{fontenc}
\usepackage[latin1]{inputenc}
\usepackage{mathtools}
\usepackage{comment}
\usepackage{dsfont}
\usepackage[nocompress]{cite}

\usepackage[shortlabels]{enumitem}
\usepackage{aliascnt}
\usepackage[bookmarks=true,pdfstartview=FitH, pdfborder={0 0 0}, colorlinks=true,citecolor=red, linkcolor=blue]{hyperref}
\usepackage{bbm}
\usepackage{nicefrac}

\theoremstyle{plain}
\newtheorem{thm}{Theorem}[section]
\newaliascnt{cor}{thm}
\newaliascnt{prop}{thm}
\newaliascnt{lem}{thm}

\newtheorem{prop}[prop]{Proposition}
\newtheorem{lem}[lem]{Lemma}
\aliascntresetthe{cor}
\aliascntresetthe{prop}
\aliascntresetthe{lem} 
%

\theoremstyle{definition}
\newaliascnt{defn}{thm}
\newaliascnt{asu}{thm}
\newaliascnt{con}{thm}

\newtheorem{asu}[asu]{Assumption}

\aliascntresetthe{defn}
\aliascntresetthe{asu}
\aliascntresetthe{con}
%

\newcounter{stp}
\newcounter{stpi}
\newcounter{stpci}
\newcounter{stpiii}

 \setcounter{stp}{0}
 \setcounter{stpiii}{0}

\theoremstyle{thm}
\newaliascnt{rem}{thm}
\newaliascnt{exa}{thm}
\newaliascnt{masu}{thm}
\newaliascnt{nota}{thm}
\newaliascnt{sett}{thm}
\newtheorem{rem}[rem]{Remark}

\aliascntresetthe{rem}
\aliascntresetthe{exa}
\aliascntresetthe{masu}
\aliascntresetthe{nota}
\aliascntresetthe{sett}
%


%


\setcounter{tocdepth}{2}
\numberwithin{equation}{section}

\parindent=8pt
\labelindent=10pt

\setlist[enumerate]{font = \normalfont}

%

%

\newcommand{\rL}{\mathrm{L}}
\newcommand{\rLq}{\rL^q}
\newcommand{\rLp}{\rL^p}
\newcommand{\rD}{\mathrm{D}}
\newcommand{\rH}{\mathrm{H}}
\newcommand{\rW}{\mathrm{W}}
\newcommand{\rB}{\mathrm{B}}
\newcommand{\rC}{\mathrm{C}}
\newcommand{\rX}{\mathrm{X}}
\newcommand{\rY}{\mathrm{Y}}
\newcommand{\rF}{\mathrm{F}}
\newcommand{\rBUC}{\mathrm{BUC}}
\newcommand{\cL}{\mathcal{L}}
\newcommand{\cA}{\mathcal{A}}
\newcommand{\cR}{\mathcal{R}}
\newcommand{\N}{\mathbb{N}}
\newcommand{\R}{\mathbb{R}}
\newcommand{\E}{\mathbb{E}}
\newcommand{\F}{\mathbb{F}}
\DeclareMathOperator{\mdiv}{div}
\newcommand{\mre}{\mathrm{e}}
\newcommand{\del}{\partial}
\newcommand{\per}{\mathrm{per}}
\newcommand{\dz}{\del_z}
\newcommand{\dt}{\del_t}
\newcommand{\srd}{\, \mathrm{d}}
\newcommand{\eps}{\varepsilon}
\DeclareMathOperator{\Id}{Id}

\newcommand{\Gal}{\Gamma_l}
\newcommand{\Gau}{\Gamma_u}

\newcommand{\tand}{\enspace \text{and} \enspace}
\newcommand{\ton}{\enspace \text{on} \enspace}
\newcommand{\tfor}{\enspace \text{for} \enspace}
\newcommand{\tin}{\enspace \text{in} \enspace}
\newcommand{\tas}{\enspace \text{as} \enspace}
\newcommand{\twith}{\enspace \text{with} \enspace}

\newcommand{\rd}{\mathrm{d}}

\renewcommand{\rm}{\mathrm{m}}
\newcommand{\rr}{\mathrm{r}}
\newcommand{\rv}{\mathrm{v}}
\newcommand{\rvs}{\mathrm{vs}}
\newcommand{\rc}{\mathrm{c}}
\newcommand{\rev}{\mathrm{ev}}
\newcommand{\rcd}{\mathrm{cd}}
\newcommand{\rac}{\mathrm{ac}}
\newcommand{\rcr}{\mathrm{cr}}
\newcommand{\rth}{\mathrm{th}}
\newcommand{\rcp}{\mathrm{cp}}
\newcommand{\rref}{\mathrm{ref}}
\newcommand{\rpd}{\mathrm{pd}}
\newcommand{\rpv}{\mathrm{pv}}
\newcommand{\rcn}{\mathrm{cn}}

\newcommand{\trho}{\Tilde{\rho}}
\newcommand{\tu}{\Tilde{u}}

\newcommand{\tz}{\Tilde{z}}
\newcommand{\tT}{\Tilde{T}}
\newcommand{\tq}{\Tilde{q}}
\newcommand{\tp}{\Tilde{p}}
\newcommand{\tV}{\Tilde{V}}
\newcommand{\tQ}{\Tilde{Q}}

\begin{document}

\title[Moisture-compressible NSE]{Moisture dynamics with phase changes coupled to heat-conducting, compressible fluids}

\author{Felix Brandt}
\address{Department of Mathematics, University of California, Berkeley, Berkeley, CA 94720, USA.}
\email{fbrandt@berkeley.edu}

\author{Matthias Hieber}
\address{Technische Universit\"{a}t Darmstadt,
Schlo\ss{}gartenstra{\ss}e 7, 64289 Darmstadt, Germany.}
\email{hieber@mathematik.tu-darmstadt.de}

\author{Lin Ma}
\address{School of Mathematical Sciences, Capital Normal University, Beijing
100048, P. R. China}
\email{malin.cnu@foxmail.com}

\author{Tarek Z\"{o}chling}
\address{Technische Universit\"{a}t Darmstadt,
Schlo\ss{}gartenstra{\ss}e 7, 64289 Darmstadt, Germany.}
\email{zoechling@mathematik.tu-darmstadt.de}
\subjclass[2020]{35K59, 35Q35, 35Q86, 76N10}
\keywords{Strong well-posedness; Lagrangian coordinates; optimal $\rL^p$-$\rL^q$ estimates; moist atmospheric flows; micro-physics moisture model; phase changes; compressible non-isothermal flows.}

\begin{abstract}
It is shown that a model coupling the heat-conducting compressible Navier-Stokes equations to a micro-physics model of moisture in air is locally strongly well-posed for large data in suitable function spaces and strongly well-posed on $[0,\tau]$ for every $\tau > 0$ for small initial data.
This seems to be the first result on $[0,\tau]$ for arbitrary $\tau > 0$ for a model coupling moisture dynamics to heat-conducting, compressible Navier-Stokes equations.
A key feature of the micro-physics model is that it also includes phase changes of water in moist air.
These phase changes are associated with large amounts of latent heat and thus result in a strong coupling to the thermodynamic equation.
The well-posedness results are obtained by means of a Lagrangian approach, which allows to treat the hyperbolicity in the continuity equation.
More precisely, optimal $\rL^p$-$\rL^q$ estimates are shown for the linearized system, leading to the local well-posedness result by a fixed point argument and suitable nonlinear estimates.
For the well-posedness result on $[0,\tau]$ for arbitrary $\tau > 0$, a refined analysis of the linearized problem close to equilibria is carried out, and the roughness of the source term, induced by the phase changes, requires to establish delicate a priori bounds.
\end{abstract}

\maketitle

\section{Introduction}\label{sec:intro}

Modeling and understanding moisture dynamics in the atmosphere is important in fluid dynamics and atmospheric sciences.
Due to the complex nature of the moisture processes in the atmosphere, which in turn results from the coupling of the thermodynamics of water and dry air, this is a challenging task, and there are various approaches by different communities.
For background on moisture models, we refer for instance to the survey article of Stevens\cite{Ste:05}, the article by Pauluis, Czaja and Korty\cite{PCK:08} as well as the monograph by Khouider\cite{Kho:19}, and the references therein.
There is also vast literature on the mathematical analysis of moisture models.
For the case of only one moisture quantity, we refer to the works by Bousquet, Coti-Zelati, Fr\'emond, Huang, Kukavica, Temam, Tribbia and Ziane\cite{BCZT:14,CZFTT:13,CZHKTZ:15,CZT:12}. 
In \cite{LT:16}, Li and Titi established the global well-posedness and a relaxation limit of a variant of a model of large scale dynamics of precipitation fronts in the tropical atmosphere first introduced by Frierson, Majda and Pauluis\cite{FMP:04}.
A similar problem was also studied by Majda and Souganidis\cite{MS:10} in the weak setting.
Zhang, Smith and Stechmann\cite{ZSS:21} numerically investigated the effects of clouds and phase changes on fast-wave averaging.
Very recently, Remond-Tiedrez, Smith and Stechmann\cite{RTSS:24a} showed the well-posedness and regularity at the cloud edge for a nonlinear elliptic PDE arising in the context of the precipitating quasi-geostrophic model.
A moist Boussinesq system was then investigated by the same authors in \cite{RTSS:24b}.

For the underlying moisture model in this paper, the starting point is that water in the atmosphere is present in the form of vapor water, cloud water and rain water.
Importantly, by phase changes, these various forms can be transformed into one another.
More precisely, rain water evaporates to vapor water, condensation leads to the change of vapor water into cloud water, while accumulation and collection of cloud water by rain result in the formation of rain water.
The effects of these phase changes are twofold:
First, they lead to a strong coupling of the moisture quantities to the thermodynamic equation, also owing to the presence of different heat capacities for dry air, water vapor and liquid water.
Second, these phase changes cause significant mathematical challenges due to switch terms in the source terms, which thus lack Fr\'echet differentiability.
We can overcome this mathematical difficulty by our approach based on a combination of classical energy estimates with more modern tools as optimal data results for compressible fluids.
This allows us to regard the switch terms, which are non-differentiable, as forcing terms.

From a modeling point of view, based on the works of Kessler\cite{Kes:69} and Grabowski and Smolarkiewicz\cite{GS:96}, Klein and Majda\cite{KM:06} developed a moisture model for warm clouds, corresponding to a bulk micro-physics closure.
The model from \cite{KM:06} was later on refined by Hittmeir and Klein\cite{HK:18}, who considered a more sophisticated thermodynamic setting, giving rise to a significantly more strongly coupled problem.

The aim of this paper is to rigorously study a moist atmosphere dynamics model that is coupled to the heat-conducting compressible Navier-Stokes equations whose analysis was pioneered by Matsumura and Nishida\cite{MN:80,MN:83}.
We refer here also to the works of Lions\cite{Lio:98} and Feireisl\cite{Fei:04} in the weak setting.
For the existence of global weak solutions for viscous compressible and heat-conducting Navier-Stokes equations, we refer to the article by Bresch and Desjardins\cite{BD:07}.
A multi-scale analysis of compressible viscous and rotating fluids has been carried out by Feireisl, Gallagher, Gerard-Varet and Novotn\'y\cite{FGGVN:12}.
Danchin\cite{Dan:14} used a Lagrangian approach to the compressible Navier-Stokes equations.
For further background on Fourier analysis methods, including applications to the compressible Navier-Stokes equations, we also refer to the monograph of Bahouri, Chemin and Danchin\cite{BCD:11} and the references therein.

Let us emphasize that we do not consider a Boussinesq approximation, which would be simpler to analyze, but we take into account the full non-isothermal compressible Navier-Stokes equations, comprising a continuity equation for the density of dry air, a momentum balance as well as a diffusion equation for the temperature.

Next, we briefly describe the resulting model coupling the moisture model from \cite{HK:18} to the non-isothermal compressible Navier-Stokes equations.
We consider the coupled model of moisture dynamics and compressible Navier-Stokes equations on a domain of the shape $\Omega= G \times (0,1)$, with~$G = (0,1)^2$.
Moreover, we denote by $\tau \in (0,\infty]$ a positive time.
Then $\rho_\rd$, $\rho_\rv$, $\rho_\rc$ and $\rho_\rr$ represent the densities of dry air, vapor water, cloud water and rain water, respectively.
With these quantities, the mixing ratios for vapor water, cloud water and rain water are defined by $q_j \coloneqq \nicefrac{\rho_j}{\rho_\rd}$, where $j \in \{\rv,\rc,\rr\}$.
Another important variable is the air velocity of the atmosphere $u = (v,w)$, where $v$ is the horizontal velocity, while $w$ represents the vertical velocity.
In this context, $p$, $\theta$ and $T$ denote the pressure, the potential temperature and the thermodynamic temperature, respectively.
Besides, $V_\rr$ is the sedimentation velocity, which is assumed to be a smooth function, while $Q_\rm$, $Q_\rth$ and $Q_\rcp$ depend on the moisture quantities.
Furthermore, $Q_1$, $Q_2$, $\mu$, $\lambda$, $g$, $\kappa$, $c_1$, $R_\rd$ and $R_\rv$ are suitable constants.
Let us also briefly elaborate on the source terms.
By $S_\rev$, we denote the rate of evaporation of rain water, $S_\rcd$ represents the condensation of vapor water to cloud water and the inverse process, $S_\rac$ accounts for the auto-conversion of cloud water into rain water by accumulation of microscopic droplets, and $S_\rcr$ is the collection of cloud water by falling rain.
Note that these terms will be made precise in \autoref{sec:coupled model}.
For the convective derivative $\rD_t = \partial_t + u \cdot \nabla$, the model coupling the moisture dynamics and non-isothermal compressible Navier-Stokes equations then reads as 
\begin{equation}\label{eq:coupled moisture compr NSE comp}
    \left\{
    \begin{aligned}
        \rD_t\rho_{\rd}+\rho_{\rd} \mdiv u
        &=0,\\
        \rho_\rd Q_\rm\rD_t u-\mu\Delta u-(\mu+\lambda)\nabla\mdiv u+\nabla p
        &=\rho_{\rd}q_{\rr}V_{\rr}\partial_{z}u-\rho_{\rd}Q_{\rm}g \mre_{3},\\
        Q_{\rth} \rD_t T-\kappa \Delta T
        &=c_1 q_{\rr}V_{\rr}\partial_{z}T + Q_{\rcp} T \mdiv u- (Q_1 T + Q_2)(S_\rev - S_\rcd),\\
        \rD_t q_{\rv}-\Delta q_{\rv} 
        &= S_\rev - S_\rcd,\\
		\rD_t q_{\rc}-\Delta q_{\rc}
        &= S_\rcd - (S_\rac + S_\rcr),\\
		\rD_t q_{\rr}-\Delta q_{\rr}
        &= \del_z(q_\rr V_\rr) + q_\rr V_\rr \del_z \log \rho_\rd + (S_\rac + S_\rcr) - S_\rev,\\
		p
        &=\rho_{\rd}(R_\rd+R_\rv q_{\rv}) T.
    \end{aligned}
    \right.
\end{equation}
The model is complemented by suitable boundary and initial conditions, see Eqs.~\eqref{eq:bc fluid}--\eqref{eq:init conds}.
In this article, we establish the local strong well-posedness as well as the global strong well-posedness for small data of the model presented in \eqref{eq:coupled moisture compr NSE comp}.
In order to handle the hyperbolic effects from the continuity equation in \eqref{eq:coupled moisture compr NSE comp}$_1$, we first transform the coupled system to Lagrangian coordinates.
We then develop an $\rL^p$-$\rL^q$-theory to the linearized problem, showing the existence of a unique solution to the linearized problem, and providing a priori estimates in terms of the data.
Thanks to a careful analysis of the transformed terms, this leads to the local strong well-posedness by means of a fixed point procedure.
Concerning the global well-posedness for small data, we slightly adjust the transformation to Lagrangian coordinates in order to investigate the system close to equilibrium solutions.
A refined analysis of the linearized operator associated with the compressible Navier-Stokes moisture system, together with suitable nonlinear estimates to handle the rough source terms, then paves the way for the second main result of this paper.
Major difficulties here are the lack of Fr\'echet differentiability of the nonlinear terms, which is due to the switch terms that account for the phase changes, and the non-locality in time of the transformed terms.
The special shape of the source terms requires us to use an approach combining maximal $\rL^p$-regularity with energy estimates.
As far as we can see, only using either maximal $\rL^p$-regularity or energy estimates is insufficient to handle the switch terms for the global strong well-posedness for small data.
For more background on maximal $\rL^p$-regularity and optimal $\rL^p$-$\rL^q$ estimates, we refer e.g., to the monograph\cite{DHP:03} and the article\cite{DHP:07}, while details on the underlying function spaces can for instance be found in the monographs\cite{Ama:19,Tri:78}.

The rigorous analytical investigation of models as introduced in \cite{HK:18,KM:06} started only recently.
In fact, in \cite{HKLT:17}, Hittmeir, Klein, Li and Titi showed that for a prescribed velocity field, the model from~\cite{KM:06} is globally strongly well-posed.
The same authors extended the analysis to the situation of the moisture model coupled to the viscous, incompressible primitive equations in \cite{HKLT:20}, where, building on the groundbreaking global strong well-posedness result for the primitive equations due to Cao and Titi\cite{CT:07}, the global solvability of the resulting coupled model was obtained.
The refined model from \cite{HK:18} was then analyzed in \cite{HKLT:23}, and the authors established the global strong well-posedness in the situation of a prescribed fluid velocity.
In a very recent paper, Doppler, Klein, Liu and Titi\cite{DKLT:24} showed the local strong well-posedness of the model coupling the moisture model from \cite{HK:18} with the compressible Navier-Stokes equations as presented in short form in \eqref{eq:coupled moisture compr NSE comp} for initial data in $\rH^2$.
In this paper, we extend the result from \cite{DKLT:24} in two directions, namely we establish its local strong well-posedness for a larger class of initial data, and we prove the (almost) global strong well-posedness for small data, see also \autoref{sec:main results} for the precise results.

This article is organized as follows.
In \autoref{sec:coupled model}, we expand on the coupled model comprising the moisture dynamics as well as the compressible Navier-Stokes equations.
\autoref{sec:main results} is dedicated to the presentation of the main results in the $\rL^p$-$\rL^q$-setting.
We then transform the coupled model to Lagrangian coordinates in \autoref{sec:Lagrangian coords}.
In \autoref{sec:lin theory}, we discuss the solvability of the linearized problem and prove optimal $\rL^p$-$\rL^q$ estimates.
Together with nonlinear estimates, this leads to the proof of the local strong well-posedness result in \autoref{sec: local}.
\autoref{sec: global} is devoted to the proof of the global well-posedness for small data.

\section{Description of the Coupled Moisture-Fluid Model}\label{sec:coupled model}

The purpose of this section is to introduce the coupled model of the moisture dynamics and non-isothermal compressible Navier-Stokes equations in detail.
To this end, let us briefly recall that we consider the problem on a unit box with horizontal periodicity $\Omega = G \times (0,1)$, where $G = (0,1)^2$, and $\tau \in (0,\infty]$ represents a positive time.
Additionally, we recall the densities of dry air $\rho_\rd$, vapor water $\rho_\rv$, cloud water~$\rho_\rc$ and rain water $\rho_\rr$ as well as the resulting mixing ratios for vapor water $q_\rv = \nicefrac{\rho_\rv}{\rho_\rd}$, cloud water $q_\rc = \nicefrac{\rho_\rc}{\rho_\rd}$ and rain water $q_\rr = \nicefrac{\rho_\rr}{\rho_\rd}$.
The moist air density~$\rho$ then takes the shape
\begin{equation*}
    \rho \coloneqq \rho_\rd(1 + q_\rv + q_\rc + q_\rr).
\end{equation*}
For the potential temperature $\theta$ as well as positive constants $p_\rref$, $c_\rpd$ and $R_\rd$, with $c_\rpd > R_\rd$, the temperature $T$ is given by
\begin{equation*}
    T \coloneqq \theta \Bigl(\frac{p}{p_\rref}\Bigr)^{\frac{\gamma - 1}{\gamma}}, \enspace \text{where} \enspace \gamma = \frac{c_\rpd}{c_\rpd - R_\rd} > 1.
\end{equation*}
Moreover, for further constants $c_\rpv$, $c_1 > 0$, the mixed specific heat capacity $c_\nu$ is of the form
\begin{equation*}
    c_\nu \coloneqq c_\rpd + c_\rpv q_\rv + c_1(q_\rc + q_\rr),
\end{equation*}
while for another positive constant $R_\rv$, the mixed gas constant $\sigma$ is defined by
\begin{equation*}
    \sigma \coloneqq \Bigl(\frac{c_\rpv}{c_\rpd} R_\rd - R_\rv\Bigr) q_\rv + \frac{c_1}{c_\rpd} R_\rd(q_\rc + q_\rr),
\end{equation*} 
and for positive reference values $L_\rref$ and $T_\rref$, the latent heat of condensation is
\begin{equation*}
    L(T) \coloneqq L_\rref + (c_\rpv - c_1)(T - T_\rref).
\end{equation*}
Another important quantity is the saturation mixing ratio $q_\rvs \coloneqq q_\rvs(p,T)$, which is assumed to be a non-negative, uniformly bounded and Lipschitz continuous function of the pressure $p$ and the temperature~$T$, and we suppose that $0 \le q_\rvs \le q_\rvs^* < \infty$.
Here $f^+$ and $f^-$ represent the positive and the negative part of a function $f$, i.e., $f^+ \coloneqq \max(0,f)$ and $f^- \coloneqq \min(0,f)$.
With the above quantities, we have
\begin{equation*}
    \begin{aligned}
        Q_\rm
        &\coloneqq 1 + q_\rv + q_\rc + q_\rr,\\
        Q_\rth
        &\coloneqq \frac{c_\nu}{\gamma} + \sigma = \frac{c_\rpd}{\gamma} + \Bigl(\frac{c_\rpv}{\gamma} + \frac{c_\rpv}{c_\rpd} R_\rd - R_\rv\Bigr) q_\rv + \Bigl(\frac{c_1}{\gamma} + \frac{c_1}{c_\rpd}R_\rd\Bigr)(q_\rc + q_\rr),\\
        Q_\rcp
        &\coloneqq \sigma - \frac{R_\rd}{c_\rpd} c_\nu = - R_\rd - R_\rv q_\rv,\\
        Q_1
        &\coloneqq \frac{R_\rv}{R_\rd + R_\rv q_\rv}\Bigl(\sigma - \frac{R_\rd}{c_\rpd} c_\nu\Bigr) + c_\rpv - c_1 = c_\rpv - c_1 - R_\rv, \tand\\
        Q_2 
        &\coloneqq L_\rref - (c_\rpv - c_1) T_\rref.
    \end{aligned}
\end{equation*}

Next, we make precise the source terms in \eqref{eq:coupled moisture compr NSE comp}.
In fact, these are the rates of evaporation of rain water~$S_\rev$, the condensation of vapor water to cloud water and the inverse process $S_\rcd$, the auto-conversion of cloud water into rain water by accumulation of microscopic droplets $S_\rac$ and the collection of cloud water by falling rain $S_\rcr$.
They are given by
\begin{equation*}
    \begin{aligned}
        S_\rev
        &\coloneqq c_\rev \frac{p}{\rho}(q_\rvs - q_\rv)^+ q_\rr = c_\rev T \frac{R_\rd + R_\rv q_\rv}{1 + q_\rv + q_\rc + q_\rr}(q_\rvs - q_\rv)^+ q_\rr,\\
        S_\rcd
        &\coloneqq c_\rcd(q_\rv - q_\rvs) q_\rc + c_\rcn(q_\rv - q_\rvs)^+ q_\rcn,\\
        S_\rac
        &\coloneqq c_\rac(q_\rc - q_\rac)^+, \tand S_\rcr \coloneqq c_\rcr q_\rc q_\rr.
    \end{aligned}
\end{equation*}
Here $c_\rev$, $c_\rcd$, $c_\rcn$, $c_\rac$, $q_\rcn$ and $q_\rac$ denote further positive constants.
Let us remark that for simplicity of the presentation, and as it does not affect the analysis, we assume that the constants are such that
\begin{equation*}
    \begin{aligned}
        T
        &= \theta p^{\frac{\gamma-1}{\gamma}}, \enspace c_\nu = 1 + q_\rv + q_\rc + q_\rr, \enspace \sigma = q_\rv + q_\rc + q_\rr, \enspace L(T) = L_\rref + T - T_\rref,\\
        Q_\rm 
        &= Q_\rth = 1 + q_\rv + q_\rc + q_\rr, \enspace Q_\rcp = -(1 + q_\rv), \enspace c_1 = q_\rcn = q_\rac = Q_1 = Q_2 = 1,\\
        S_\rev
        &= T \cdot \frac{1 + q_\rv}{Q_\rm}(q_\rvs - q_\rv)^+ q_\rr, \enspace S_\rcd = (q_\rv - q_\rvs) q_\rc + (q_\rv - q_\rvs)^+,\\
        S_\rac 
        &= (q_\rc - 1)^+ \tand S_\rcr = q_\rc q_\rr,
    \end{aligned}
\end{equation*}
and we suppose that $\kappa = 1$.
For further details and an explanation of the constants, we also refer to~\cite[Sec.~1]{DKLT:24}.

Plugging in the terms resulting from the aforementioned simplifications, and defining the Lam\'e operator~$\rL$ by~$\rL u \coloneqq \mu\Delta u +(\mu+\lambda)\nabla\mdiv u$, where we assume~$\mu > 0$ and $2\mu + \lambda > 0$, we arrive at the following system of equations on~$(0,\tau) \times \Omega$:
\begin{equation}\label{eq:coupled moisture compr NSE detailed}
    \left\{
    \begin{aligned}
        \del_t \rho_{\rd} + \mdiv(\rho_\rd u)
        &=0,\\
        \rho_{\rd}Q_{\rm}[\del_t u + (u \cdot \nabla)u]-\rL u+\nabla p
        &=\rho_{\rd}q_{\rr}V_{\rr}\partial_{z}u -\rho_{\rd}Q_{\rm}g \mre_{3},\\
        Q_{\rm}[\del_t T + (u \cdot \nabla)T]-\Delta T
        &=q_{\rr}V_{\rr}\partial_{z}T-(1+q_{\rv})T \mdiv u-(T+1)\\
        &\quad \cdot \Bigl[T \cdot \frac{1 + q_\rv}{Q_\rm}(q_\rvs - q_\rv)^+ q_\rr- (q_\rv - q_\rvs) q_\rc - (q_\rv - q_\rvs)^+\Bigr],\\
        \del_t q_{\rv} + (u \cdot \nabla) q_{\rv}-\Delta q_{\rv} 
        &=T \cdot \frac{1 + q_\rv}{Q_\rm}(q_\rvs - q_\rv)^+ q_\rr- (q_\rv - q_\rvs) q_\rc - (q_\rv - q_\rvs)^+,\\
        \del_t q_{\rc} + (u \cdot \nabla) q_{\rc}-\Delta q_{\rc}
        &= (q_\rv - q_\rvs) q_\rc + (q_\rv - q_\rvs)^+ - (q_\rc - 1)^+ - q_\rc q_\rr ,\\
        \del_t q_{\rr} + (u \cdot \nabla) q_{\rr}-\Delta q_{\rr}
        &= \frac{1}{\rho_{\rd}}\partial_{z}(\rho_{\rd} q_{\rr}V_{\rr}) + (q_{\rc}-1)^{+}+q_{\rc}q_{\rr} - T \cdot \frac{1 + q_\rv}{Q_\rm}(q_\rvs - q_\rv)^+ q_\rr,\\
		p
        &=\rho_{\rd}(1+q_{\rv}) T.
    \end{aligned}
    \right.
\end{equation}
The model is completed by suitable boundary conditions.
More precisely, we assume that all quantities satisfy periodic boundary conditions on the lateral boundary.
Let us denote the upper and lower boundary by $\Gau \coloneqq G \times \{1\}$ and $\Gal \coloneqq G \times \{0\}$. For the fluid velocity, we assume no-slip boundary conditions, i.e.,
\begin{equation}\label{eq:bc fluid}
    u = 0, \ton \Gau \cup \Gal,
\end{equation}
and we assume homogeneous Neumann boundary conditions for the other quantities, so
\begin{equation}\label{eq:bc heat & moisture}
    \del_z T = 0 \tand \del_z q_j = 0, \tfor j \in \{\rv,\rc,\rr\}, \ton \Gau \cup \Gal.
\end{equation}
Finally, we also take into account initial conditions
\begin{equation}\label{eq:init conds}
    \begin{aligned}
        \rho_\rd(0) 
        &= \rho_{\rd,0}, \enspace u(0) = u_0, \enspace T(0) = T_0,\\
        q_\rv(0) 
        &= q_{\rv,0}, \enspace q_\rc(0) = q_{\rc,0} \tand q_\rr(0) = q_{\rr,0},
    \end{aligned}
\end{equation}
which are required to satisfy the boundary conditions \eqref{eq:bc fluid} and \eqref{eq:bc heat & moisture} provided they are assumed to be sufficiently regular as in \autoref{thm:localWP} or \autoref{thm: global WP}.
In this context, let us remark that we use $Q_{\rm,0}$ in order to denote $Q_\rm(0) = 1 + q_{\rv,0} + q_{\rc,0} + q_{\rr,0}$.

\section{Main Results}\label{sec:main results}

Having introduced the complete model in detail, we state the main results on the local strong well-posedness as well as the global strong well-posedness for small data in the $\rL^p$-$\rL^q$-setting in this section.
With regard to strong solutions for the compressible Navier-Stokes equations, it seems that there are basically two approaches:
One is based on a Hilbert space setting and relies on higher Sobolev regularity for the initial data.
In addition to more restrictive regularity requirements for the initial data, this also leads to more complicated compatibility conditions that need to be satisfied by the initial data.
The second approach, which we mainly pursue in this paper, does not require to consider higher regularity.
Instead, for sufficiently large~$q$, the ground space for the fluid velocity, temperature equation and mixing ratios is $\rL^q$, while we take into account $\rH^{1,q}$ for the continuity equation.
Instead, for sufficiently large $q$, we choose $\rL^q$ as the base space for the fluid velocity, temperature equation, and mixing ratios, while the continuity equation is treated in the space~$\rH^{1,q}$.

Before stating the main results, we need to clarify some of the function spaces used throughout this paper.
Let $s \in \R$ and $p,q \in (1,\infty)$. In the sequel, we
denote by $\rW^{s,q}(\Omega)$ the fractional Sobolev spaces, by~$\rH^{s,q}(\Omega)$ the Bessel potential spaces and  by $\rB^{s}_{q,p}(\Omega)$ the Besov spaces, where $\Omega \subset \R^3$ is a bounded domain.
We set $\rH^{0,q}(\Omega) \coloneqq \rL^q(\Omega)$ and note that $\rH^{s,q}(\Omega)$ coincides with the 
classical Sobolev space~$\rW^{m,q}(\Omega)$ provided  $s=m \in \N$. 
In the following, we need some terminology to describe periodic boundary conditions on $\Gamma_l = \partial G \times [0,1]$ as well as on $\partial G$. 
Given $s \in [0,\infty)$ and~$p,q \in (1,\infty)$, we define the spaces $\rH^{s,q}_\per (\Omega) \coloneqq \overline{\rC^\infty_\per (\overline{\Omega})}^{\| \cdot \|_{\rH^{s,q}(\Omega)}}$ and $\rH^{s,q}_\per (G) \coloneqq \overline{\rC^\infty_\per (\overline{G})}^{\| \cdot \|_{\rH^{s,q}(G)}}$, where horizontal periodicity is modeled by the function spaces $\rC^\infty_\per(\overline{\Omega})$ and $\rC^\infty_\per(\overline{G})$ defined e.g., in \cite{HK:16}. 
The Besov spaces $\rB^s_{q,p,\per}(\Omega)$ and~$\rB^s_{q,p,\per}(G)$ are defined likewise. 
In order to shorten notation, we abbreviate the norms by a subscript~$_\tau$ for the time component and a subscript $_x$ for the spatial component, i.e., instead of~$\| \cdot \|_{\rL^p(0,\tau;\rL^q(\Omega))}$, we write $\| \cdot \|_{\rL_{\tau}^p(\rL_x^q)}$, and likewise in the situation of Sobolev or Besov spaces.

We are now in the position to formulate the first main result on the local, strong well-posedness of \eqref{eq:coupled moisture compr NSE detailed} for large data.

\begin{thm}[Local strong well-posedness]  \mbox{} \\
    \label{thm:localWP}
    Let $\frac{2}{p}+\frac{3}{q}<1$.
    Assume that 
    \begin{equation*}
        \begin{aligned}
            (\rho_{\rd,0},u_0, T_0, q_{j,0}) \in \rH^{1,q}_\per(\Omega) \times  \rB_{q,p,\per}^{2(1-\nicefrac{1}{p})}(\Omega)^3 \times \rB_{q,p,\per}^{2(1-\nicefrac{1}{p})}(\Omega) \times \rB_{q,p,\per}^{2(1-\nicefrac{1}{p})}(\Omega) \eqqcolon \rY_\gamma, 
        \end{aligned}
    \end{equation*}
    for $j\in \{ \rv,\rc,\rr\}$, such that $u_0 = 0$ and $\dz T_0 = \dz q_j = 0$ on $\Gau \cup \Gal$, for $j \in \{\rv,\rc,\rr\}$, and suppose that $(\rho_{\rd,0},q_{j,0})$ fulfill
    \begin{equation*}
        M_1\leq \rho_{\rd,0},q_{j,0}\leq M_2 \text{ in }\Omega, \ \text{ for some constants }M_1,M_2>0.
    \end{equation*}
Then there exists $a = a(\rho_{\rd,0},u_0, T_0, q_{j,0}) > 0$, such that the system \eqref{eq:coupled moisture compr NSE detailed} admits a unique, strong solution $(\rho_{\rd},u,T,q_\rv,q_\rc,q_\rr)$ satisfying
  \begin{equation*}
        \begin{aligned}
            \rho_{\rd} \in \rH^{1,p}(0,a;\rH^{1,q}_\per(\Omega)), \ u 
            &\in \rH^{1,p}(0,a;\rLq(\Omega)^3) \cap \rLp(0,a;\rH^{2,q}_\per(\Omega)^3), \tand \\ T,q_j 
            &\in \rH^{1,p}(0,a;\rLq(\Omega)) \cap \rLp(0,a;\rH^{2,q}_\per(\Omega)), \tfor j \in \{ \rv,\rc,\rr\}.
        \end{aligned}
    \end{equation*}
    Moreover, there exist constants $M_1^*,M_2^*>0$ such that $M_1^*\leq \rho_{\rd}\leq M_2^*$ for all $t \in (0,a)$ and $j \in \{ \rv,\rc,\rr\}$.
\end{thm}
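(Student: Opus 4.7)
The plan is to follow the Lagrangian-coordinate strategy outlined in the introduction, combined with optimal $\rL^p$-$\rL^q$ maximal regularity for a carefully chosen linearization and a Banach fixed point argument on a short time interval.

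First, I would pass to Lagrangian coordinates via the flow map $X$ associated with $u$. The decisive benefit is that the continuity equation \eqref{eq:coupled moisture compr NSE detailed}$_1$ collapses to the algebraic identity $\rho_\rd \circ X = \rho_{\rd,0}/\det(\nabla X)$, so that $\rho_\rd$ is no longer governed by a hyperbolic PDE but a pointwise nonlinear function of the Jacobian of the flow. The convective derivatives $\rD_t$ become pure $\dt$'s, at the price of expressing spatial derivatives through $(\nabla X)^{-\top}\nabla_y$; crucially, the Lam\'e operator, the heat operator for $T$, and the diffusion operators for $q_\rv, q_\rc, q_\rr$ retain their principal parabolic structure, and all deformation-dependent corrections are absorbed into the right-hand side.

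Second, I would freeze coefficients at the initial state and study the resulting linear system, comprising a Lam\'e-type equation for the (transformed) velocity with no-slip plus horizontal-periodic boundary conditions and decoupled heat-type equations for $T, q_\rv, q_\rc, q_\rr$ with mixed Neumann/periodic boundary conditions. Under the index condition $\tfrac{2}{p}+\tfrac{3}{q}<1$, these problems enjoy maximal $\rL^p$-$\rL^q$ regularity on $\Omega$ (carried out in \autoref{sec:lin theory}), yielding a unique solution in the regularity class stated in the theorem together with the expected a priori estimate in terms of $\rY_\gamma$-norms of the data and $\rL^p_\tau \rL^q_x$-norms of the forcing. The $\rH^{1,p}_\tau(\rH^{1,q}_x)$-regularity of $\rho_\rd$ is then read off $\rho_{\rd,0}/\det(\nabla X)$ once $\nabla X$ is controlled in a sufficiently smooth space, which is granted by the embedding $\rH^{1,p}(0,a;\rL^q_x) \cap \rL^p(0,a;\rH^{2,q}_x) \hookrightarrow \rC([0,a];\rC^1(\overline\Omega))$ provided by the index condition.

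Third, the full nonlinear problem would be recast as a fixed point in a closed ball of this maximal regularity space: given a tentative state, plug it into the quadratic convective residues, the pulled-back pressure gradient $(\nabla X)^{-\top}\nabla_y[\rho_\rd(1+q_\rv)T]$, the coupling terms $\rho_\rd q_\rr V_\rr \dz u$, the pressure-work term $(1+q_\rv)T\mdiv u$, and the source terms $S_\rev, S_\rcd, S_\rac, S_\rcr$, solve the linearized system, and show that the resulting solution map is a contraction on some $[0,a]$. Multiplicative product estimates in $\rH^{s,q}$-spaces, controlled through the index condition, make the quadratic nonlinearities tractable, while the time-derivative component of the regularity class supplies smallness as $a\to 0$ via standard trace and interpolation inequalities. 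Self-mapping and contraction both follow, and uniqueness is a byproduct. The positivity bounds $M_1^*\le\rho_\rd\le M_2^*$ come from the explicit representation, the identity $\det(\nabla X)|_{t=0}=1$, and continuity in time.

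The main obstacle I expect is the rough source terms $(q_\rvs-q_\rv)^+$, $(q_\rv-q_\rvs)^+$ and $(q_\rc-1)^+$, which are merely Lipschitz and not Fr\'echet differentiable; these cannot be linearized in the classical sense and must be carried entirely on the right-hand side of the fixed point map. Fortunately, the positive part is a bounded, Lipschitz Nemytskii operator on $\rL^q$ (and on $\rH^{s,q}$ for $0\le s\le 1$), so the contraction estimate goes through using the Lipschitz bound on the switches together with the $\rC([0,a];\rC^0)$-regularity of the remaining factors (such as $q_\rr$ or $q_\rc$), and H\"older in time provides the smallness in $a$ required for both self-mapping and contraction.
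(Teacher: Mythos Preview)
Your overall strategy---Lagrangian coordinates, maximal $\rL^p$-$\rL^q$ regularity for a linearization frozen at the initial data, then a Banach fixed point on a short interval with the Lipschitz switch terms carried as forcing---matches the paper's. One genuine difference: you eliminate $\rho_\rd$ via the algebraic identity $\rho_\rd\circ X=\rho_{\rd,0}/\det(\nabla X)$, whereas the paper keeps the transformed continuity equation $\del_t\trho_\rd+\rho_{\rd,0}\mdiv\tu=G_\rd$ as part of the linear system and includes $\trho_\rd\in\rH^{1,p}_\tau(\rH^{1,q}_x)$ among the fixed-point unknowns. Both work; your route is slightly more economical, the paper's keeps the structure closer to the Eulerian system and avoids nonlinear composition estimates for $1/\det(\nabla X)$ in $\rH^{1,q}_x$.

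There is, however, a nontrivial point you do not address. After the Lagrangian change of variables the homogeneous Neumann conditions $\dz T=\dz q_j=0$ on $\Gau\cup\Gal$ become \emph{inhomogeneous} in the new variable, namely $\del_{y_3}\tT=(1-Z_{3,3})\del_{y_3}\tT-Z_{1,3}\del_{y_1}\tT-Z_{2,3}\del_{y_2}\tT$ and likewise for $\tq_j$. These boundary data must be estimated in the sharp trace space $\F_{h,\tau}=\rF_{p,q}^{\nicefrac{1}{2}-\nicefrac{1}{2q}}(0,\tau;\rL^q(G))\cap\rL^p(0,\tau;\rB_{q,q}^{1-\nicefrac{1}{q}}(G))$ dictated by the optimal-data result, and this is where the paper spends real effort: a bilinear paraproduct-type estimate in the time-Triebel--Lizorkin component (\autoref{lem:est of bilin term on boundary}) combined with a normal-trace estimate (\autoref{lem:est of normal derivative}) is needed to show $\|B_i(\tz)\|_{\F_\tau}\le C(\tau)\to 0$ and the corresponding Lipschitz estimate for the contraction. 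Your proposal treats the linearized heat problems as having ``mixed Neumann/periodic boundary conditions'' without flagging that the Neumann data is nontrivial and must itself be controlled in a fractional-in-time space; without this ingredient the fixed point does not close.
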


In order to state the well-posedness result on $[0,\tau]$ for arbitrary $\tau > 0$, we assume that $q_\rvs \equiv 0$. 
It is straightforward to check that $(\overline{\rho}_\rd, 0, \overline{T}, 0, 1, 0 )$ is an equilibrium solution to the coupled problem.

\begin{thm}[\text{Well-posedness on $[0,\tau]$ for large $\tau > 0$ for small data}] \mbox{} \\
    \label{thm: global WP}
     Let $\tau>0$ be an arbitrary given time, and let $\frac{2}{p}+\frac{3}{q}<1$.
     Moreover, suppose that $(\rho_{\rd,0},u_0, T_0, q_{j,0}) \in \rY_\gamma$, for $j\in \{ \rv,\rc,\rr\}$, so that $u_0 = 0$ and $\dz T_0 = \dz q_j = 0$ on~$\Gau \cup \Gal$, for $j \in \{\rv,\rc,\rr\}$, and in addition that
     \begin{equation*}
         \| ( \rho_{\rd,0} - \overline{\rho}_\rd, u_0, T_0 - \overline{T}, q_{\rv,0}, q_{\rc,0}-1, q_{\rr,0} ) \|_{\rH^{1,q}(\Omega) \times  \rB_{q,p}^{2(1-\nicefrac{1}{p})}(\Omega)^5} \leq \delta,
     \end{equation*}
     for some $\delta>0$ and constants $\overline{\rho}_\rd>0$, $\overline{T} \geq0$. 
     Then the system \eqref{eq:coupled moisture compr NSE detailed} admits a unique, strong solution $(\rho_{\rd},u,T,q_\rv,q_\rc,q_\rr)$ on $[0,\tau]$ satisfying
    \begin{equation*}
        \begin{aligned}
            &\| \rho_\rd - \overline{\rho}_\rd \|_{\rL^\infty(0,\tau;\rH^{1,q}(\Omega))} + \| \nabla \rho_\rd \|_{\rH^{1,p}(0,\tau;\rLq(\Omega))} + \| \dt \rho_\rd \|_{\rLp(0,\tau;\rH^{1,q}(\Omega))} \\
            &+ \| u \|_{\rLp(0,\tau;\rH^{2,q}(\Omega))}+ \| \dt u \|_{\rLp(0,\tau;\rLq(\Omega))}+ \| u \|_{\rL^\infty(0,\tau;\rB^{2(1-\nicefrac{1}{p})}_{q,p}(\Omega))} \\ 
            &+\| T-\overline{T} \|_{\rLp(0,\tau;\rH^{2,q}(\Omega))}+ \| \dt T \|_{\rLp(0,\tau;\rLq(\Omega))}+ \| T-\overline{T} \|_{\rL^\infty(0,\tau;\rB^{2(1-\nicefrac{1}{p})}_{q,p}(\Omega))} \\ 
            & + \sum_{j \in \{ \rv,\rr\}} \| q_j \|_{\rLp(0,\tau;\rH^{2,q}(\Omega))}+ \| \dt q_j \|_{\rLp(0,\tau;\rLq(\Omega))}+ \| q_j \|_{\rL^\infty(0,\tau;\rB^{2(1-\nicefrac{1}{p})}_{q,p}(\Omega))} \\
            & +\| q_\rc-1 \|_{\rLp(0,\tau;\rH^{2,q}(\Omega))}+ \| \dt q_\rc \|_{\rLp(0,\tau;\rLq(\Omega))}+ \| q_\rc-1 \|_{\rL^\infty(0,\tau;\rB^{2(1-\nicefrac{1}{p})}_{q,p}(\Omega))}  \leq C\delta,
        \end{aligned}
    \end{equation*}
    for a constant $C>0$. Moreover, $\rho_\rd(t,x) \geq \nicefrac{\overline{\rho}_\rd}{2}$ for all $t \in [0,\tau]$ and $x \in \Omega$.
\end{thm}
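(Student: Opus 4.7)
My plan is to combine the local strong well-posedness from \autoref{thm:localWP} with a delicate $\tau$-uniform a priori estimate for small data, and then continue the local solution to all of $[0,\tau]$ by a standard continuation argument. The setup is as follows: I would redo the Lagrangian change of variables of \autoref{sec:Lagrangian coords}, but using $\overline{\rho}_\rd$ as the reference density instead of $\rho_{\rd,0}$, so that the resulting linearization at the equilibrium $(\overline{\rho}_\rd,0,\overline{T},0,1,0)$ is autonomous. Writing $\sigma \coloneqq \rho_\rd - \overline{\rho}_\rd$, $\vartheta \coloneqq T - \overline{T}$, $\eta \coloneqq q_\rc - 1$, the transformed system has principal part equal to this constant-coefficient linearization, and is perturbed by (a) smooth quasilinear terms whose coefficients vanish at the equilibrium, (b) convective and Piola-cofactor corrections generated by the Lagrangian transformation, and (c) the only-Lipschitz switch-type sources such as $\overline{T}(-q_\rv)^+ q_\rr$, $q_\rv^+$ and $\eta^+$. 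Since $q_\rvs\equiv 0$ and the equilibrium lies on the switching surface, each switch term is $1$-Lipschitz in the perturbation and vanishes at equilibrium.

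The key ingredient is an optimal $\rL^p$-$\rL^q$ estimate for the linearized problem on $[0,\tau]$ with constants \emph{independent} of $\tau$. The linearization decouples into a compressible Navier-Stokes block for $(\sigma,u)$ coupled to four scalar reaction-diffusion blocks for $\vartheta$, $q_\rv$, $\eta$, $q_\rr$ with Neumann, no-slip and lateral-periodic boundary conditions. For the compressible block I would carry out a resolvent analysis around $(\overline{\rho}_\rd,0)$, splitting $\sigma$ into its conserved spatial mean (which is controlled directly by the $O(\delta)$ datum) and its mean-free part, on which Matsumura-Nishida-type spectral analysis yields a spectral gap and hence exponential decay of the semigroup. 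A standard argument then promotes the finite-interval maximal regularity of \autoref{sec:lin theory} to maximal regularity on $[0,\tau]$ with a constant independent of $\tau$. The scalar blocks are handled by the same principle, using the spectral gap of the Neumann Laplacian on mean-free functions and, where relevant, treating the conserved spatial averages separately.

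With the $\tau$-uniform linear estimate in hand, the global a priori bound is obtained by combining maximal $\rL^p$-regularity with energy estimates, as announced in the introduction. Applying the linear MR estimate to a given solution and estimating the nonlinearity on the right-hand side yields the top-order norms appearing in the statement. The smooth perturbations (a) and (b) contribute $O(\delta^2)$ via the embedding $\rH^{1,p}(0,\tau;\rLq)\cap\rLp(0,\tau;\rH^{2,q})\hookrightarrow\rBUC([0,\tau];\rBUC(\overline{\Omega}))$ guaranteed by $\frac{2}{p}+\frac{3}{q}<1$. The switch terms (c) are pointwise bounded by $|q_\rv|+|\eta|+|q_\rr|$, so their $\rL^p(0,\tau;\rLq)$-norm is controlled, but not in a way that yields contraction directly at the top regularity level. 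Energy estimates based on the monotonicity of $x\mapsto x^+$ then provide lower-order $\rL^\infty(0,\tau;\rL^2)$ bounds that, together with the MR bound, close a bootstrap showing that if the solution norm is bounded by $C\delta$ on a maximal interval $[0,T^*) \subset [0,\tau]$, then it is in fact bounded by $C\delta/2$, ruling out blow-up of the norm. The local solution from \autoref{thm:localWP} is then extended to $[0,\tau]$ by standard continuation, and the uniform lower bound $\rho_\rd\ge \overline{\rho}_\rd/2$ follows from the explicit Lagrangian formula for $\rho_\rd$ in terms of $\mdiv u$ combined with the $\rBUC$-embedding.

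The main obstacles I anticipate are twofold. The first is the $\tau$-uniformity of the linear estimate: any $\tau$-growth cannot be compensated by smallness of $\delta$ on a fixed arbitrary interval, so the spectral-gap analysis for the linearized compressible Navier-Stokes-moisture operator---including a careful isolation of the one-dimensional neutral mode associated with conservation of total mass, and a matching treatment of the averaged moisture quantities---must deliver genuine exponential decay on the appropriate complement. The second, more specific to this problem, is the interplay between this exponential decay and the non-differentiable switch sources: since these are only Lipschitz, a contraction argument at the top regularity level is unavailable, and one must carefully orchestrate energy estimates, which exploit the monotonicity of the positive-part nonlinearities, together with maximal $\rL^p$-regularity, which recovers the top-order norms. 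This is precisely the MR-plus-energy combination emphasized in the introduction and, in my view, the delicate technical core of the proof.
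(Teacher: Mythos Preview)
Your proposal has a genuine gap rooted in a misreading of the theorem and a resulting wrong linear strategy. The statement fixes $\tau>0$ first and then asks for $\delta>0$; nothing prevents $\delta$ and the constant $C$ from depending on $\tau$. Your claim that ``any $\tau$-growth cannot be compensated by smallness of $\delta$ on a fixed arbitrary interval'' is therefore incorrect, and your plan to prove $\tau$-\emph{uniform} maximal regularity via a spectral gap and exponential decay of the linearized semigroup is neither needed nor available. In fact the linearization you write down is unstable: in the paper's equilibrium linearization the $\tq_\rr$-equation reads $\partial_t\tq_\rr=(\Delta+1)\tq_\rr$ with Neumann conditions, so the constant mode has eigenvalue $+1$ and the semigroup grows like $e^{t}$. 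There is no ``neutral mode'' to isolate here; the mode is genuinely unstable, and no Matsumura--Nishida type analysis will produce exponential decay on a complement that still contains it. Your programme therefore breaks at its first step.

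The paper proceeds quite differently. It accepts $\tau$-dependent constants throughout: one shows only that the linearized compressible Navier--Stokes--moisture operator is, after a shift $\omega>0$, $\mathcal R$-sectorial of angle $<\pi/2$ on $\rX_0$ (a lower-order perturbation argument), which gives maximal $\rL^p$-regularity on $[0,\tau]$ with a constant depending on $\tau$. The smooth nonlinearities and Lagrangian correction terms are estimated by $C(\tau)\eps^2$ for $\tz\in\mathbb B_{\eps,\tau}$, exactly as you suggest. The only-Lipschitz switch terms are \emph{not} fed into a contraction at top regularity; instead they are moved to the left-hand side of an auxiliary scalar problem of the form $\partial_t Q-\Delta Q=f-Q^+$ with Neumann data, for which an $\rL^q$-energy estimate (multiply by $Q|Q|^{q-2}$, use $|Q^+\,Q|Q|^{q-2}|\le |Q|^q$, Gronwall with factor $e^{C\tau}$) yields $\|Q\|_{\rL^\infty_\tau(\rL^q_x)}\le C(\tau)\eps^2$, and then maximal regularity upgrades this to $\|Q\|_{\E_{1,h,\tau}}\le C(\tau)\eps^2$. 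A fixed point on $\mathbb B_{\eps,\tau}$ then closes for $\eps$ small depending on $\tau$. In short: replace your spectral-gap/decay step by the shifted $\mathcal R$-sectoriality plus the $\rL^q$-energy/Gronwall treatment of the auxiliary switch problem, and allow all constants to depend on $\tau$.
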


\begin{rem}\label{rem_ assu}
    { Concerning the assumptions on $q_\rvs$ and the sedimentation velocity~$V_\rr$, we note that any constant $q_\rvs^*\geq0$ is permissible, and we can consider any given function $V_\rr$ which is sufficiently smooth.
    However, for convenience of notation, we focus on the case $q_\rvs \equiv 0$ and $V_\rr \equiv 1$.} 
\end{rem}

\section{The Model in Lagrangian Coordinates}\label{sec:Lagrangian coords}

In order to circumvent the hyperbolic effects that arise in the continuity equation, we introduce Lagrangian coordinates.
The present section is dedicated to the calculation of the associated change of variables.

For this purpose, we introduce the characteristics $X$ that corresponds to the fluid velocity $u$, i.e., $X$ solves the Cauchy problem
\begin{equation}\label{eq:CP Lagrange}
    \left\{
    \begin{aligned}
        \del_t X(t,y)
        &= u(t,X(t,y)), &&\tfor t \in (0,\tau),\\
        X(0,y)
        &= y, &&\tfor y \in \Omega.
    \end{aligned}
    \right.
\end{equation}
Assume that $X(t,\cdot)$ is a $\rC^1$-diffeomorphism from $\Omega$ onto $\Omega$ for all $t \in (0,\tau)$, and for all $t \in (0,\tau)$, denote by $Y(t,\cdot) = [X(t,\cdot)]^{-1}$ the inverse of $X(t,\cdot)$.
The (local-in-time) invertibility of $X$ will be discussed in \autoref{lem:ests of trafo}(i) below.
For~$(t,y) \in (0,\tau) \times \Omega$, we consider the change of variables given by
\begin{equation*}
    \begin{aligned}
        \rho_\rd(t,X(t,y))
        &= \trho_\rd(t,y), \enspace u(t,X(t,y)) = \tu(t,y), \enspace T(t,X(t,y)) = \tT(t,y),\\
        q_j(t,X(t,y))
        &= \tq_j(t,y), \tfor j \in \{\rv,\rc,\rr\},\\
        p(t,X(t,y))
        &= \rho_\rd(t,X(t,y))(1 + q_\rv(t,X(t,y)))T(t,X(t,y))\\
        &= \trho_\rd(t,y)(1 + \tq_\rv(t,y))\tT(t,y) = \tp(t,y),\\
        V_\rr(t,X(t,y))
        &= \tV_\rr(t,y), \enspace q_\rvs(p(t,X(t,y)),T(t,X(t,y))) = q_\rvs(\tp(t,y),\tT(t,y)) \eqqcolon \tq_\rvs.
    \end{aligned}
\end{equation*}
In particular, for $(t,x) \in (0,\tau) \times \Omega$, we have
\begin{equation*}
    \begin{aligned}
        \rho_\rd(t,x)
        &= \trho_\rd(t,Y(t,x)), \enspace u(t,x) = \tu(t,Y(t,x)), \enspace T(t,x) = \tT(t,Y(t,x)),\\
        q_j(t,x)
        &= \tq_j(t,Y(t,x)), \tfor j \in \{\rv,\rc,\rr\},\\
        p(t,x)
        &= \rho_\rd(t,x)(1 + q_\rv(t,x))T(t,x)\\ 
        &= \trho_\rd(t,Y(t,x))(1 + \tq_\rv(t,Y(t,x)))\tT(t,Y(t,x))= \tp(t,Y(t,x)),\\
        V_\rr(t,x)
        &= \tV_\rr(t,Y(t,x)), \enspace q_\rvs(p(t,x),T(t,x)) = q_\rvs(\tp(t,Y(t,x)),\tT(t,Y(t,x))) \eqqcolon \tq_\rvs.
    \end{aligned}
\end{equation*}
We denote by $Z$ the inverse of the gradient of $X$, i.e., $Z(t,y) = Z_{i,j} = [\nabla X]^{-1}(t,y)$ for $t \in (0,\tau)$ and $y \in \Omega$.
Moreover, we will also write $\tQ_\rm = 1 + \tq_\rv + \tq_\rc + \tq_\rr$.

With this change of coordinates, lengthy but straightforward calculations lead to the following system of equations, where we write the original linear terms on the left-hand side in order to prepare for the nonlinear estimates.
In fact, the transformed system of equations associated to \eqref{eq:coupled moisture compr NSE detailed} and in Lagrangian coordinates reads
\begin{equation}\label{eq:Lagrangian coords}
	\left\{	
    \begin{aligned}
        \del_t\trho_\rd+\rho_{\rd,0}\mdiv\tu
        &=G_\rd, &&\tin (0,\tau) \times \Omega,\\
		\del_t\tu-\frac{1}{\rho_{\rd,0}Q_{\rm,0}}\rL\tu 
        &=G_u, &&\tin (0,\tau) \times \Omega,\\
        \del_t\tT-\frac{1}{Q_{\rm,0}}\Delta\tT
        &=G_T, &&\tin (0,\tau) \times \Omega,\\
        \del_t\tq_\rv-\Delta\tq_\rv
        &=G_\rv, &&\tin (0,\tau) \times \Omega,\\
        \del_t\tq_\rc-\Delta\tq_\rc
        &=G_\rc, &&\tin (0,\tau) \times \Omega,\\
        \del_t\tq_\rr-\Delta \tq_\rr
        &=G_\rr, &&\tin (0,\tau) \times \Omega.
		\end{aligned}\right.
\end{equation}
For brevity, we introduce the transformed Laplacian $\cL_1$ as well as the resulting difference $\cL_1 - \Delta$.
In fact, given sufficiently regular $f$, these terms are given by~$\cL_1 f \coloneqq \sum_{j,k,l=1}^3 \frac{\del^2 f}{\del y_k \del y_l} Z_{k,j} Z_{l,j} + \sum_{j,k,l=1}^3 \frac{\del f}{\del y_l} \frac{\del Z_{l,j}}{\del y_k} Z_{k,j}$ and
\begin{equation}\label{eq:transformed Laplacian minus Laplacian}
    \begin{aligned}
        (\cL_1 - \Delta) f 
        &= \sum_{j,k,l=1}^3\frac{\partial^{2}f}{\partial{y_{k}}\partial{y_{l}}}(Z_{k,j}-\delta_{k,j})Z_{l,j} + \sum_{k,l=1}^3\frac{\partial^{2}f}{\partial{y_{k}}\partial{y_{l}}}(Z_{l,k}-\delta_{l,k}) + \sum_{j,k,l=1}^3\frac{\partial f}{\partial{y_{l}}} \frac{\partial Z_{l,j}}{\partial{y_{k}}}Z_{k,j}.
    \end{aligned}
\end{equation}
In the sequel, we denote the transformed term associated with $\nabla \mdiv \tu$ by $\cL_2$.
This leads to
\begin{equation}\label{eq:transformed second comp Lame}
    \begin{aligned}
        \bigl((\cL_2 - \nabla \mdiv) \tu\bigr)_i 
        &= \sum_{j,k,l=1}^3\frac{\partial^{2}\tu_{j}}{\partial{y_{k}}\partial{y_{l}}}(Z_{k,j}-\delta_{k,j})Z_{l,i} + \sum_{j,l=1}^3\frac{\partial^{2}\tu_{j}}{\partial{y_{j}}\partial{y_{l}}}(Z_{l,i}-\delta_{l,i})\\
        &\quad + \sum_{j,k,l=1}^3\frac{\partial\tu_{j}}{\partial{y_{k}}} \frac{\partial Z_{k,j}}{\partial{y_{l}}}Z_{l,i}.
    \end{aligned}
\end{equation}
The transformed terms $G_\rd$, $G_u$, $G_T$, $G_\rv$, $G_\rc$ and $G_\rr$ on the right-hand side of \eqref{eq:Lagrangian coords} are then given by
\begin{equation*}
    G_\rd = -(\trho_\rd-\rho_{\rd,0})\mdiv\tu-\trho_\rd\nabla\tu:[Z^\top-\mathrm{Id}_3],
\end{equation*}
\begin{equation*}
    \begin{aligned}
        (G_{u})_{i}
        &= \Bigl(1-\frac{\trho_\rd \tQ_\rm}{\rho_{\rd,0}Q_{\rm,0}}\Bigr)\del_t\tu_{i} + \frac{\mu}{\rho_{\rd,0}Q_{\rm,0}} (\cL_1 - \Delta) \tu_i + \frac{\mu+\lambda}{\rho_{\rd,0}Q_{\rm,0}} \bigl((\cL_2 - \nabla \mdiv) \tu\bigr)_i\\
        &\quad -\frac{1}{\rho_{\rd,0}Q_{\rm,0}} \cdot \Bigl(\tT(1+\tq_\rv)(Z^\top\nabla \trho_\rd)_{i}+\trho_\rd\tT(Z^\top\nabla\tq_\rv)_{i}\\
        &\qquad+\trho_\rd(1+\tq_\rv)(Z^\top\nabla \tT)_{i} - \trho_\rd\tq_\rr\tV_\rr\sum_{l=1}^3\frac{\del \tu_i}{\del y_l}Z_{l,3} + \trho_\rd\tQ_\rm g \mre_{3}\Bigr),
    \end{aligned}
\end{equation*}
\begin{equation*}
    \begin{aligned}
        G_T
        &= \Bigl(1-\frac{\tQ_\rm}{Q_{\rm,0}}\Bigr)\del_t\tT + \frac{1}{Q_{\rm,0}} \biggl((\cL_1 - \Delta) \tT + \tq_\rr\tV_\rr\sum_{l=1}^3 \frac{\del\tT}{\del y_{l}} Z_{l,3} - (1+\tq_\rv)\tT\nabla \tu:Z^\top\\
        &\qquad + (1+\tT)\Bigl[(\tq_\rv-\tq_\rvs)\tq_\rc+(\tq_\rv-\tq_\rvs)^{+}-\tT\tq_\rr\frac{1+\tq_\rv}{\tQ_\rm}(\tq_\rvs-\tq_\rv)^{+}\Bigr]\biggr),
    \end{aligned}
\end{equation*}
\begin{equation*}
    \begin{aligned}
        G_\rv
        &= (\cL_1 - \Delta) \tq_\rv +\tT\tq_\rr\frac{1+\tq_\rv}{\tQ_\rm}(\tq_\rvs-\tq_\rv)^{+}-(\tq_\rv-\tq_\rvs)\tq_\rc-(\tq_\rv-\tq_\rvs)^{+},\\
        G_\rc
        &= (\cL_1 - \Delta) \tq_\rc +(\tq_\rv-\tq_\rvs)\tq_\rc+(\tq_\rv-\tq_\rvs)^{+}-(\tq_\rc-1)^{+}-\tq_\rc\tq_\rr, \tand\\
        G_\rr
        &= (\cL_1 - \Delta) \tq_\rr +(\tq_\rc-1)^{+}+\tq_\rc\tq_\rr-\tT\tq_\rr\frac{1+\tq_\rv}{\tQ_\rm}(\tq_\rvs-\tq_\rv)^{+}\\
        &\quad +\tV_\rr\sum_{l=1}^3 \frac{\del \tq_\rr}{\del y_{l}}Z_{l,3} + \tq_\rr\sum_{l=1}^3 \frac{\del \tV_\rr}{\del y_{l}} Z_{l,3}
        +\frac{1}{\trho_\rd}\tq_\rr \tV_\rr\sum_{l=1}^3 \frac{\del \trho_\rd}{\del y_{l}} Z_{l,3}.
    \end{aligned}
\end{equation*}
In order to ease notation, we will also write $G = (G_\rd,G_{u},G_T,G_\rv,G_\rc,G_\rr)$ in the sequel.

In view of \eqref{eq:bc fluid} and \eqref{eq:bc heat & moisture}, we also need to transform the boundary conditions.
The periodic boundary conditions on the lateral boundary as well as the Dirichlet boundary conditions are not affected, while for the Neumann boundary conditions, we have to take into account the chain rule.
In fact, using the tilde notation for the respective functions in Lagrangian coordinates, we obtain 
\begin{equation}\label{eq:transformed bc fluid}
    \tu = 0, \ton \Gau \cup \Gal
\end{equation}
for the fluid part as well as 
\begin{equation}\label{eq:transformed bc heat & moisture}
    \begin{aligned}
        \del_{y_3} \tT
        &= (1-Z_{3,3}) \del_{y_3} \tT - Z_{1,3} \del_{y_1} \tT - Z_{2,3} \del_{y_2} \tT \eqqcolon B_{T,1}, &&\ton \Gau,\\
        \del_{y_3} \tT
        &=  (1-Z_{3,3}) \del_{y_3} \tT - Z_{1,3} \del_{y_1} \tT - Z_{2,3} \del_{y_2} \tT \eqqcolon B_{T,2}, &&\ton \Gal,\\
        \del_{y_3} q_j
        &=(1-Z_{3,3}) \del_{y_3} q_j - Z_{1,3} \del_{y_1} q_j - Z_{2,3} \del_{y_2} q_j \eqqcolon B_{q_j ,1}, &&\ton \Gau,\\
        \del_{y_3} q_j
        &= (1-Z_{3,3}) \del_{y_3} q_j - Z_{1,3} \del_{y_1} q_j - Z_{2,3} \del_{y_2} q_j \eqqcolon B_{q_j ,2}, &&\ton \Gal, 
    \end{aligned}
\end{equation}
where $j \in \{\rv,\rc,\rr\}$.
We will also use $B_i = (B_{T,i},B_{\rv,i},B_{\rc,i},B_{\rr,i})$ in the following.
The initial conditions do not change when transforming to Lagrangian coordinates, so we have
\begin{equation}\label{eq:init conds Lagrange}
    \begin{aligned}
        \trho_\rd(0) 
        &= \rho_{\rd,0}, \enspace \tu(0) = u_0, \enspace \tT(0) = T_0,\\
        \tq_\rv(0) 
        &= q_{\rv,0}, \enspace \tq_\rc(0) = q_{\rc,0} \tand \tq_\rr(0) = q_{\rr,0}.
    \end{aligned}
\end{equation}

The aim now is to show that the transformed system of equations in Lagrangian coordinates \eqref{eq:Lagrangian coords}--\eqref{eq:transformed bc heat & moisture} admits a unique solution.
This will be addressed in the following sections.
The fundamental idea is to use a fixed point procedure based on a good understanding of the underlying linearized problem.
The study of the problem is the topic of the following section.

\section{Linear Theory}\label{sec:lin theory}

In this section, we tackle the linearized problem that is associated with the coupled problem in Lagrangian coordinates \eqref{eq:Lagrangian coords}--\eqref{eq:init conds Lagrange}.
The shape of the boundary conditions as revealed in \eqref{eq:transformed bc fluid} and \eqref{eq:transformed bc heat & moisture} calls for an {\em optimal data result}, i.e., we are interested in finding the suitable spaces for the data and establishing estimates of the solution to the linear problem by the data.

As before, we consider the domain $\Omega= G \times (0,1)$ where $G = (0,1)^2$. 
We start by analyzing the linearized equation that corresponds to \eqref{eq:Lagrangian coords}$_1$, namely
\begin{equation}
	\left\{
	\begin{aligned}
        \dt \trho_\rd + \rho_{\rd,0} \mdiv \tu &=f_\rd, &&\tin (0,\tau) \times \Omega, \\
		  \trho_\rd(0)  &= \rho_{\rd,0}  , &&\tin \Omega,  
	\end{aligned}
	\right. 
	\label{eq:conteq3}
\end{equation}
the Lam\'{e} system with homogeneous Dirichlet boundary conditions given by
\begin{equation}
	\left\{
	\begin{aligned}
		\dt \tu - \frac{1}{\rho_{\rd,0} Q_{\rm,0}} \rL \tu &=  f_u, &&\tin (0,\tau) \times \Omega,  \\ 
         \tu 
         &= 0, &&\ton (0,\tau) \times (\Gau \cup \Gal),\\
        \tu(0)  
        &=u_0 ,  &&\tin \Omega,
	\end{aligned}
	\right. 
	\label{eq:Lame1}
\end{equation}
as well as the non-homogeneous heat equation with Neumann boundary conditions
\begin{equation}
	\left\{
	\begin{aligned}
		\dt \tq - \Delta \tq
        &=  f_h, &&\tin (0,\tau) \times \Omega,  \\
        \dz \tq   
        &= g_{h,1}  , &&\ton (0,\tau) \times \Gau, \\ 
		\dz \tq    
        &= g_{h,2}  , &&\ton (0,\tau) \times \Gal, \\ 
        \tq(0)  
        &= q_{h,0},  &&\tin \Omega.
	\end{aligned}
	\right. 
	\label{eq:Heat1}
\end{equation}
As indicated above, all variables are subject to periodic boundary conditions on the lateral boundary. 

The following assumptions are crucial for solving the boundary value problems~\eqref{eq:Lame1} and \eqref{eq:Heat1} subject to inhomogeneous data.

\begin{asu}\label{assu:data}
Given $p$, $q \in (1,\infty)$ such that $\nicefrac{2}{p} + \nicefrac{1}{q} \notin \{1,2\}$ and $\nicefrac{2}{p}+ \nicefrac{3}{q}<1$. Moreover, suppose that $f_\rd$, $f_u$, $f_h$, $b_1$, $b_2$, $g_{h,1}$, $g_{h,2}$, $\rho_{\rd,0}$, $u_0$ and $q_{h,0}$, satisfy
\begin{enumerate}[(i)]
    \item $(\rho_{\rd,0}, u_0, q_{h,0}) \in \rH_\per^{1,q}(\Omega) \times \rB_{q,p,\per}^{2(1-\nicefrac{1}{p})}(\Omega)^3 \times \rB_{q,p,\per}^{2(1-\nicefrac{1}{p})}(\Omega)$, 
    and $\rho_{\rd,0}$, $q_{h,0}$ satisfy $M_1 \leq \rho_{\rd,0}, q_{h,0} \leq M_2$ in $\Omega$ for  some constants $M_1, M_2 >0$, and we have $u_0 = 0$ on $\Gau \cup \Gal$ provided $\nicefrac{2}{p} + \nicefrac{1}{q} < 2$,
    \item $(f_\rd,f_u,f_h) \in \rLp(0,\tau;\rH_\per^{1,q}(\Omega)) \times  \rLp(0,\tau;\rLq(\Omega)^3) \times \rLp(0,\tau;\rLq(\Omega)) \eqqcolon \E_{0,\rd,\tau} \times \E_{0,u,\tau} \times \E_{0,h,\tau}$,
    \item $g_{h,i} \in \rF_{p,q}^{\nicefrac{1}{2}-\nicefrac{1}{2q}}(0,\tau;\rLq(G)) \cap \rLp(0,\tau;\rB_{q,q, \per}^{1 - \nicefrac{1}{q}}(G))=:\F_{h,\tau},\ $  and \ $\dz q_{h,0} = g_{h,i}(0)$ if $\nicefrac{2}{p} + \nicefrac{1}{q} < 1$, for $i\in \{ 1,2\}$.
\end{enumerate}
\end{asu}

Under these assumptions, we have
$\rB^{2\left(1-\nicefrac{1}{p}\right)}_{q,p,\per}(\Omega) \hookrightarrow \rH^{1,q}_\per(\Omega)$ and $\rho_{\rd,0} Q_{m,0} > 0$.
In particular, $\rho_{\rd,0} Q_{m,0} \in \rH^{1,q}_\per(\Omega)$ by virtue of the conditions on $p,q$ and $\rH^{1,q}$ behaving like a Banach algebra with respect to multiplication. Sobolev embedding further implies that
$\rho_{\rd,0} Q_{m,0} \in \mathrm{BUC}^\gamma(\bar{\Omega})$.
Hence, by \cite{DHP:07}, optimal data results concerning \eqref{eq:Lame1} and \eqref{eq:Heat1} are known for the half-space and can be adapted to the layer $\Omega_L = \R^2 \times (0,1)$ via the same localization procedure.
Finally, these results carry over to the domain
$\Omega = G \times (0,1)$
by extending periodic functions on $G$ to the full space $\R^2$, as demonstrated on p.~1082 in \cite{HK:16}.

\begin{lem}\label{lem:lin opt data}

Let $\tau>0$, and assume $p,q$ and that the data $(f_\rd,f_u,f_h,g_{h,1},g_{h,2},\rho_{\rd,0},u_0,q_{h,0})$ satisfy \autoref{assu:data}.

\begin{enumerate}
    \item Eq.~\eqref{eq:Lame1} has a unique solution
    \begin{equation*}
        \tu \in \rH^{1,p}(0,\tau;\rLq(\Omega)^3) \cap \rLp(0,\tau;\rH_\per^{2,q}(\Omega)^3) \eqqcolon \E_{1,u,\tau},
    \end{equation*}
    satisfying $\| \tu \|_{\E_{1,u,\tau}} \le C \bigl(\| f_u\|_{\E_{0,u,\tau}}  + \| u_0 \|_{\rB_{q,p}^{2(1-\nicefrac{1}{p})}(\Omega)}\bigr)$ for some constant $C > 0$.
    \item Let $\tu \in \E_{1,u,\tau}$ be the unique solution to \eqref{eq:Lame1} resulting from~(i).
    Then \eqref{eq:conteq3} has a unique solution 
    $\trho_\rd \in \rH^{1,p}(0,\tau; \rH_\per^{1,q}(\Omega)) \eqqcolon \E_{1,\rd,\tau}$ such that the estimate 
    \begin{equation*}
        \| \trho_\rd \|_{\E_{1,\rd,\tau}} \le C_1\bigl(\| f_\rd \|_{\E_{0,\rd,\tau}} + \| \tu \|_{\E_{1,u,\tau}} + \| \rho_{\rd,0} \|_{\rH^{1,q}(\Omega)} \bigr )
    \end{equation*}
    holds for some $C_1 > 0$.
    \item The linear problem \eqref{eq:Heat1} has a unique solution 
    \begin{equation*}
        \tq \in \rH^{1,p}(0,\tau;\rLq(\Omega)) \cap \rLp(0,\tau;\rH_\per^{2,q}(\Omega)) \eqqcolon \E_{1,h,\tau},
    \end{equation*}
    with $\| \tq \|_{\E_{1,h,\tau}} \le C\bigl(\| f_h \|_{\E_{0,h,\tau}} + \| g_{h,1} \|_{ \F_{h,\tau}} +\| g_{h,2} \|_{ \F_{h,\tau}} + \| q_{h,0} \|_{\rB_{q,p}^{2(1-\nicefrac{1}{p})}(\Omega) }\bigr)$ for a constant $C > 0$.
\end{enumerate}
\end{lem}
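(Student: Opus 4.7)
The overall strategy is to establish each of the three statements by combining the optimal $\rL^p$-$\rL^q$ maximal regularity theory from \cite{DHP:07} on the half-space with a standard localization/periodic extension scheme, transferring half-space results first to the layer $\Omega_L = \R^2 \times (0,1)$ and then to $\Omega = G \times (0,1)$ as indicated in \cite{HK:16}. Uniqueness in each case will follow by linearity once the a priori estimates are in place.

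For part~(i), the key observation is that $\nicefrac{2}{p}+\nicefrac{3}{q}<1$ forces $q>3$, so $\rH^{1,q}_\per(\Omega)$ is a Banach algebra and embeds into $\rBUC^\gamma(\overline{\Omega})$ for some $\gamma\in(0,1)$. Consequently the scalar coefficient $1/(\rho_{\rd,0}Q_{\rm,0})$ is H\"older continuous, bounded, and bounded away from zero thanks to the pointwise lower bound on $\rho_{\rd,0}$ and $q_{h,0}$. I would first invoke the constant-coefficient maximal regularity for the Lam\'e system on a half-space, available via the $\cR$-boundedness techniques of \cite{DHP:03, DHP:07}. Freezing the scalar coefficient at reference points of a partition of unity and using the H\"older modulus of continuity to produce a small perturbation on each patch, a Neumann series together with standard commutator estimates then yields the variable-coefficient result on the half-space. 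A finite cover adapted to the two flat boundary components transfers this to the layer, and horizontal periodic extension to $\Omega$, delivering the bound in $\E_{1,u,\tau}$.

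For part~(ii), once $\tu \in \E_{1,u,\tau}$ is fixed, \eqref{eq:conteq3} decouples into a pure time-ODE with values in $\rH^{1,q}_\per(\Omega)$. Direct integration yields
\begin{equation*}
\trho_\rd(t)=\rho_{\rd,0}+\int_0^t\bigl(f_\rd(s)-\rho_{\rd,0}\mdiv\tu(s)\bigr)\srd s,
\end{equation*}
and $\dt \trho_\rd = f_\rd - \rho_{\rd,0}\mdiv\tu$; the Banach algebra property of $\rH^{1,q}_\per(\Omega)$ combined with the bound of $\tu$ from~(i) gives the desired control of $\|\trho_\rd\|_{\E_{1,\rd,\tau}}$. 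Part~(iii) then follows by the same localization/extension scheme as in~(i), applied now to the constant-coefficient heat equation with inhomogeneous Neumann data; here no variable-coefficient perturbation is required, and the trace space $\F_{h,\tau}$ is precisely the one dictated by the half-space theorem of \cite{DHP:07}, consistent with the stated compatibility condition $\dz q_{h,0} = g_{h,i}(0)$ when $\nicefrac{2}{p}+\nicefrac{1}{q}<1$.

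The principal obstacle is the variable-coefficient perturbation argument in~(i). One must verify that the commutators and lower-order terms produced by multiplication with a smooth partition of unity are absorbed uniformly in $\tau$, and that the oscillation of $1/(\rho_{\rd,0}Q_{\rm,0})$ on each localization patch can be made strictly smaller than the inverse of the maximal regularity constant of the frozen operator. Both points rely crucially on the embedding $\rH^{1,q}_\per(\Omega) \hookrightarrow \rBUC^\gamma(\overline{\Omega})$ and on the strict positivity of $\rho_{\rd,0} Q_{\rm,0}$; in contrast, the heat operator in~(iii) and the ODE in~(ii) are either of constant-coefficient or of purely algebraic nature and so require no comparable care.
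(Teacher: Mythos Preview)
Your proposal is correct and follows essentially the same route as the paper: the paper's argument (given in the paragraph preceding the lemma rather than in a formal proof environment) also verifies that $\rho_{\rd,0}Q_{\rm,0}\in\rH^{1,q}_\per(\Omega)\hookrightarrow\rBUC^\gamma(\overline{\Omega})$ is strictly positive, invokes the half-space optimal data results of \cite{DHP:07}, transfers them to the layer via localization, and then to $\Omega$ via the periodic extension procedure of \cite{HK:16}. Your treatment is in fact more detailed than the paper's---you spell out the freezing-of-coefficients/Neumann series mechanism in~(i) and the explicit integration formula in~(ii), neither of which the paper writes out---but the underlying strategy is identical.
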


Let us emphasize that the constant $C > 0$ in the estimates of the respective solutions by the data in \autoref{lem:lin opt data} can be chosen independent of $\tau$ provided we consider homogeneous initial data, i.e., $u_0 = q_{h,0}=0$. 

The above preparations pave the way for the main result of this section on the solvability of the linearized problem that corresponds to \eqref{eq:Lagrangian coords}--\eqref{eq:init conds Lagrange}.
In fact, for~$j \in \{\rv,\rc,\rr\}$, the linearized problem under consideration reads as 
\begin{equation}\label{eq:complete lin syst in Lagrangian coords}
    \left\{
    \begin{aligned}
        \dt \trho_\rd + \rho_{\rd,0} \mdiv \tu 
        &=f_\rd, &&\tin (0,\tau) \times \Omega,\\
        \dt \tu - \frac{1}{\rho_{\rd,0} Q_{\rm,0}} \rL \tu 
        &=  f_u, &&\tin (0,\tau) \times \Omega,\\
        \dt \tT - \frac{1}{Q_{\rm,0}} \Delta \tT
        &= f_T, &&\tin (0,\tau) \times \Omega,\\
        \dt \tq_\rv - \Delta \tq_\rv
        &= f_\rv, &&\tin (0,\tau) \times \Omega,\\
        \dt \tq_\rc - \Delta \tq_\rc
        &= f_\rc, &&\tin (0,\tau) \times \Omega,\\
        \dt \tq_\rr - \Delta \tq_\rr
        &= f_\rr, &&\tin (0,\tau) \times \Omega,\\
        \tu = 0, \enspace \dz \tT = g_{T,1}, \enspace \dz \tq_j
        &= g_{j,1}, &&\ton (0,\tau) \times \Gau,\\
        \tu = 0, \enspace \dz \tT = g_{T,2}, \enspace \dz \tq_j
        &= g_{j,2}, &&\ton (0,\tau) \times \Gal,\\
        \trho(0) = \rho_{\rd,0}, \enspace \tu(0) = u_0, \enspace \tT(0) = T_0, \enspace \tq_j(0)
        &= q_{j,0}, &&\tin \Omega.
    \end{aligned}
    \right.
\end{equation}

The following proposition on the solvability of \eqref{eq:complete lin syst in Lagrangian coords} and an estimate of the resulting solution by the data is a consequence of \autoref{lem:lin opt data}, where for $\tu$ and $\trho$, we employ~(i) and~(ii), respectively, while for $\tT$ as well as $\tq_j$, $j \in \{\rv,\rc,\rr\}$, we make use of part~(iii).

\begin{prop}\label{prop:opt data result moisture compre NSE}
Consider $\tau > 0$ as well as $\nicefrac{2}{p}+ \nicefrac{3}{q}<1$, and suppose that 
\begin{enumerate}[(i)]
    \item $f_\rd \in \E_{0,\rd}$, $f_u \in \E_{0,u}$, $f_T$, $f_\rv$, $f_\rc$, $f_\rr \in \E_{0,h}$,
    \item $g_{T,i}$, $g_{j,i} \in \F_h$, for $i=1,2$ and $j \in \{\rv,\rc,\rr\}$, as well as
    \item $\rho_{\rd,0} \in \rH_\per^{1,q}(\Omega)$, $u_0 \in \rB_{q,p,\per}^{2(1-\nicefrac{1}{p})}(\Omega)^3$, $T_0$, $q_{j,0} \in \rB_{q,p,\per}^{2(1-\nicefrac{1}{p})}(\Omega)$, for $j \in \{\rv,\rc,\rr\}$, also satisfying $u_0 = 0$, $\dz T_0 = g_{T,i}(0)$ and $\dz q_{j,0} = g_{j,i}(0)$, for $i=1,2$, $j \in \{\rv,\rc,\rr\}$, on the respective boundaries.
\end{enumerate}
Then there is a unique solution $\trho_\rd \in \E_{1,\rd,\tau}$, $\tu \in \E_{1,u,\tau}$, $\tT$, $\tq_j \in \E_{1,h,\tau}$, $j \in \{\rv,\rc,\rr\}$, to \eqref{eq:complete lin syst in Lagrangian coords}.
Furthermore, for some constant $C > 0$, this solution admits the estimate
\begin{equation}\label{eq:opt data est}
    \begin{aligned}
        &\quad \| \trho_\rd \|_{\E_{1,\rd,\tau}} + \| \tu \|_{\E_{1,u,\tau}} + \| \tT \|_{\E_{1,h,\tau}} + \| \tq_\rv \|_{\E_{1,h,\tau}} + \| \tq_\rc \|_{\E_{1,h,\tau}} + \| \tq_\rr \|_{\E_{1,h,\tau}}\\
        &\le C\Bigl(\| f_\rd \|_{\E_{0,\rd,\tau}} + \| f_u \|_{\E_{0,u,\tau}} + \| f_T \|_{\E_{0,h,\tau}} + \| f_\rv \|_{\E_{0,h,\tau}} + \| f_\rc \|_{\E_{0,h,\tau}}\\
        &\quad + \| f_\rr \|_{\E_{0,h,\tau}} + \sum_{i=1}^2 \bigl ( \| g_{T,i} \|_{\F_{h,\tau}}+ \| g_{\rv,i} \|_{\F_{h,\tau}} + \| g_{\rc,i} \|_{\F_{h,\tau}} + \| g_{\rr,i} \|_{\F_{h,\tau}}\bigr)\\
        &\quad + \| \rho_{\rd,0} \|_{\rH^{1,q}(\Omega)} + \| (u_0,T_0,q_{\rv,0},q_{\rc,0},q_{\rr,0}) \|_{\rB_{q,p}^{2(1-\nicefrac{1}{p})}(\Omega)}\Bigr).
    \end{aligned}
\end{equation}
\end{prop}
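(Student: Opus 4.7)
The plan is to exploit the triangular structure of the linear system \eqref{eq:complete lin syst in Lagrangian coords} and solve it equation by equation using the three parts of \autoref{lem:lin opt data}. Although written as a $6$-component system, the linearization is effectively decoupled: the Lam\'e equation for $\tu$ involves only $\tu$ and the data $(f_u,u_0)$; the continuity equation for $\trho_\rd$ is driven by $f_\rd$ together with the already-solved $\tu$; and each of $\tT,\tq_\rv,\tq_\rc,\tq_\rr$ satisfies an independent parabolic equation with Neumann data on $\Gau\cup\Gal$, without any cross-coupling at the linearized level.

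Concretely, I would first invoke \autoref{lem:lin opt data}(i) to obtain the unique $\tu \in \E_{1,u,\tau}$ with the estimate $\|\tu\|_{\E_{1,u,\tau}} \le C(\|f_u\|_{\E_{0,u,\tau}} + \|u_0\|_{\rB^{2(1-\nicefrac{1}{p})}_{q,p}(\Omega)})$; admissibility of the coefficient $\frac{1}{\rho_{\rd,0} Q_{\rm,0}}$ rests on the uniform lower and upper bounds for $\rho_{\rd,0}$ and $q_{j,0}$, the Banach algebra structure of $\rH^{1,q}_\per$, and the embedding into $\rBUC^\gamma(\overline{\Omega})$ coming from $\nicefrac{2}{p}+\nicefrac{3}{q}<1$. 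Feeding this $\tu$ into the continuity equation and applying \autoref{lem:lin opt data}(ii) produces $\trho_\rd \in \E_{1,\rd,\tau}$ together with a bound that contains $\|\tu\|_{\E_{1,u,\tau}}$; substituting the estimate from the first step eliminates this intermediate quantity. Finally, I would apply \autoref{lem:lin opt data}(iii) four times, once for each of $\tT,\tq_\rv,\tq_\rc,\tq_\rr$, using the respective forcing, the two Neumann boundary functions $g_{\cdot,1},g_{\cdot,2}$ and the initial datum. The compatibility assumptions $u_0=0$, $\dz T_0=g_{T,i}(0)$, $\dz q_{j,0}=g_{j,i}(0)$ in hypothesis~(iii) of the proposition match exactly the conditions required by \autoref{assu:data}. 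Summing the six individual estimates and invoking the uniqueness clause in each part of \autoref{lem:lin opt data} delivers the full statement, including \eqref{eq:opt data est}.

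The one non-routine point is the variable coefficient $\frac{1}{Q_{\rm,0}}$ in front of $\Delta$ in the $\tT$ equation, since \autoref{lem:lin opt data}(iii) as stated concerns the pure Laplacian. The natural remedy is to observe that the localization/perturbation argument of \cite{DHP:07} behind (iii) is insensitive to multiplying the principal part by any positive H\"older-continuous scalar, and $\frac{1}{Q_{\rm,0}}$ enjoys the required H\"older regularity for exactly the same reason as $\frac{1}{\rho_{\rd,0}Q_{\rm,0}}$ in the first step. Thus the estimate of (iii) applies verbatim after this trivial extension. Beyond this bookkeeping, I expect no serious obstacle: the whole proof is a sequential assembly of the optimal $\rL^p$-$\rL^q$ results already in hand, with the decoupled structure of the linearized system doing all the essential work.
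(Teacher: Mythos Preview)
Your proposal is correct and follows exactly the approach indicated in the paper: the proposition is obtained by applying \autoref{lem:lin opt data}(i) for $\tu$, then (ii) for $\trho_\rd$, and (iii) for each of $\tT,\tq_\rv,\tq_\rc,\tq_\rr$, summing the resulting estimates. Your remark on the variable coefficient $\frac{1}{Q_{\rm,0}}$ is a reasonable elaboration of a point the paper leaves implicit in the discussion preceding \autoref{lem:lin opt data}.
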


Again, we remark that the constant $C$ in the optimal data estimate \eqref{eq:opt data est} can be chosen independent of the time $\tau$ if one considers homogeneous initial data.

\section{Local Strong Well-Posedness}\label{sec: local}

In this section, we prove the first main result of this paper, namely \autoref{thm:localWP} on the local strong well-posedness.
The section is further divided into two subsections, namely one on the nonlinear estimates, and one on the actual proof of \autoref{thm:localWP}.

\subsection{Nonlinear estimates}\label{ssec:nonlin ests}
\ 

This subsection is dedicated to estimating the nonlinear terms $G_u$, $G_T$ and $G_j$, $j \in \{\rv,\rc,\rr\}$ as made precise after \eqref{eq:Lagrangian coords}.
To this end, we first collect some auxiliary estimates.
In the following, we denote by~$\tau \in (0,\infty)$ a strictly positive, finite time.

The lemma below allows for a uniform-in-time estimate of a function in its trace space norm by the initial value and the maximal regularity norm, where the constant in the estimate is {\em independent} of $\tau > 0$.
Such an estimate is desirable for the fixed point argument, as it helps to choose time sufficiently small to get the self-map and contraction property.
For a proof in the situation of a smooth and bounded domain, which also carries over to the present situation of a unit box with horizontal periodicity, we refer e.g., to \cite[Thm.~2.4]{Shi:15}.

\begin{lem}\label{lem:est in Linfty Besov}
Consider $p$, $q \in (1,\infty)$, $\tau > 0$ and
\begin{equation*}
    f \in \E_{1,h,\tau} = \rH^{1,p}(0,\tau;\rLq(\Omega)) \cap \rLp(0,\tau;\rH_\per^{2,q}(\Omega)).
\end{equation*}
Then there is a constant $C$ which can be chosen independent of $\tau$ such that
\begin{equation*}
    \sup_{t \in (0,\tau)} \| f(t) \|_{\rB_{q,p}^{2(1-\nicefrac{1}{p})}(\Omega)} \le C\bigl(\| f(0) \|_{\rB_{q,p}^{2(1-\nicefrac{1}{p})}(\Omega)} + \| f \|_{\E_{1,h,\tau}}\bigr).
\end{equation*}
\end{lem}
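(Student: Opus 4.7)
The plan is to identify $\rB^{2(1-\nicefrac{1}{p})}_{q,p}(\Omega)$ with the real-interpolation / trace space $(\rL^q(\Omega), \rH^{2,q}_\per(\Omega))_{1-\nicefrac{1}{p},p}$ and to reduce the claim to the whole-line trace theorem, whose embedding constant is universal. The essential point is to effect this reduction through an extension procedure whose norm bound is independent of $\tau$. First I would separate the contribution of $f(0)$: let $A$ denote the $\rL^q$-realization of $-\Delta$ with domain $\rH^{2,q}_\per(\Omega)$, which is a sectorial operator generating a bounded analytic semigroup on $\rL^q(\Omega)$ and enjoying maximal $\rL^p$-regularity. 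Setting $f_1(t) \coloneqq \mre^{-tA} f(0)$ for $t \ge 0$ and $f_2 \coloneqq f - f_1$ on $[0,\tau]$, the converse of the trace theorem for maximal regularity (cf.\ \cite{DHP:03}) yields $f_1 \in \rH^{1,p}(\R_+; \rL^q(\Omega)) \cap \rL^p(\R_+; \rH^{2,q}_\per(\Omega))$ together with
\begin{equation*}
    \sup_{t \ge 0} \| f_1(t) \|_{\rB^{2(1-\nicefrac{1}{p})}_{q,p}(\Omega)} + \| f_1 \|_{\rH^{1,p}(\R_+; \rL^q) \cap \rL^p(\R_+; \rH^{2,q})} \le C_1 \| f(0) \|_{\rB^{2(1-\nicefrac{1}{p})}_{q,p}(\Omega)},
\end{equation*}
with $C_1$ independent of $\tau$. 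It thus remains to control $f_2 \in \E_{1,h,\tau}$, which by construction satisfies $f_2(0) = 0$.

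For $f_2$ I would first extend by zero to $t < 0$, which preserves $\rH^{1,p}$-regularity precisely because $f_2(0) = 0$, and then extend across $t = \tau$ by pure reflection, setting
\begin{equation*}
    \tilde f_2(t) \coloneqq f_2(2\tau - t) \tfor t \in (\tau, 2\tau), \tand \tilde f_2(t) \coloneqq 0 \tfor t \ge 2\tau.
\end{equation*}
Since $2\tau - t$ reaches $0$ as $t \to 2\tau$ and $f_2(0) = 0$, no cutoff is needed: $\tilde f_2$ is continuous on $\R$, and a substitution in time shows $\| \tilde f_2 \|_{\rH^{1,p}(\R; \rL^q) \cap \rL^p(\R; \rH^{2,q})} \le 2\| f_2 \|_{\E_{1,h,\tau}}$ with a purely numerical constant. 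The classical whole-line trace embedding $\rH^{1,p}(\R; \rL^q) \cap \rL^p(\R; \rH^{2,q}) \hookrightarrow \rBUC(\R; \rB^{2(1-\nicefrac{1}{p})}_{q,p}(\Omega))$, whose constant depends only on $p$ and $q$, then yields $\sup_{t \in (0,\tau)} \| f_2(t) \|_{\rB^{2(1-\nicefrac{1}{p})}_{q,p}(\Omega)} \le C \| f_2 \|_{\E_{1,h,\tau}}$. Combined with the estimate for $f_1$, this gives the desired inequality with a $\tau$-independent constant.

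The main obstacle is exactly this $\tau$-independence: any naive smooth extension of $f$ beyond $(0,\tau)$ relying on a fixed-support cutoff $\phi$ would produce factors of $\| \phi' \|_\infty$ that scale like $\tau^{-1}$ as $\tau \to 0$, rendering the estimate useless for the small-time fixed-point argument later. The above construction bypasses this entirely: the semigroup split absorbs the initial-data contribution into an infinite-horizon problem whose bounds are automatically uniform, while the resulting homogeneous trace $f_2(0) = 0$ enables both the zero-extension to the left and the cutoff-free reflection to the right, so that every constant appearing in the argument is purely numerical.
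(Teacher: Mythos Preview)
The paper does not prove this lemma itself but refers to \cite[Thm.~2.4]{Shi:15}, noting that the argument there for smooth bounded domains carries over to the periodic box. Your proof is correct and is in fact the standard mechanism behind such results: split off the initial datum via an analytic semigroup so that the remainder has zero trace at $t=0$, then extend the remainder to the whole line by zero and reflection with purely numerical constants, and invoke the whole-line trace embedding.

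One minor point to make precise: for the converse-trace estimate $\| f_1 \|_{\rH^{1,p}(\R_+;\rL^q) \cap \rL^p(\R_+;\rH^{2,q})} \le C_1 \| f(0) \|_{\rB_{q,p}^{2(1-\nicefrac{1}{p})}}$ to hold on the \emph{full} half-line $\R_+$, the generator must be invertible. Since $-\Delta$ with Neumann (or fully periodic) boundary conditions has $0$ in its spectrum, $\mre^{-tA}f(0)$ need not lie in $\rL^p(\R_+;\rH^{2,q})$ as written. The fix is immediate---take $A = -\Delta + 1$ (same domain, exponentially stable semigroup)---and with that adjustment every constant in your argument is indeed independent of $\tau$.
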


We remark that the estimate in \autoref{lem:est in Linfty Besov} is also valid in the vector-valued situation, i.e., for the fluid velocity $u$, and with $\E_{1,h,\tau}$ replaced by $\E_{1,u,\tau}$ accordingly.

With regard to \autoref{lem:lin opt data} and \autoref{prop:opt data result moisture compre NSE}, the following lemma on estimates of functions in the Triebel-Lizorkin space proves useful.
Again, let us observe that the result on domains with smooth boundaries as e.g., shown in \cite[Prop.~6.4]{DHP:07} transfers to the present setting of a unit box with horizontal periodicity upon using the procedure sketched before \autoref{lem:lin opt data}.

\begin{lem}\label{lem:est of normal derivative}
Let $p$, $q \in (1,\infty)$ and $\tau > 0$, and consider
\begin{equation*}
    f \in \E_{1,h,\tau} = \rH^{1,p}(0,\tau;\rLq(\Omega)) \cap \rLp(0,\tau;\rH_\per^{2,q}(\Omega)).
\end{equation*}
Then it follows that $\left.\dz f\right|_G \in \F_{h,\tau} = \rF_{p,q}^{\nicefrac{1}{2}-\nicefrac{1}{2q}}(0,\tau;\rLq(G)) \cap \rLp(0,\tau;\rB_{q,q, \per}^{1 - \nicefrac{1}{q}}(G))$, and we obtain the estimate
\begin{equation*}
    \| \dz f \|_{\F_{h,\tau}} \le C \bigl(\| f(0) \|_{\rB_{q,p}^{2(1-\nicefrac{1}{p})}(\Omega)} + \| f \|_{\E_{1,h,\tau}}\bigr)
\end{equation*}
for some constant $C > 0$ which can be chosen independent of $\tau$.
\end{lem}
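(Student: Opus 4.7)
My plan is to reduce the claim to the half-space parabolic trace theorem \cite[Prop.~6.4]{DHP:07} through the localization procedure recalled before \autoref{lem:lin opt data} (see also \cite[p.~1082]{HK:16}). After localization, the only genuine task is to produce an extension $\tilde f$ of $f$ to the time interval $\R_+$ that belongs to $\rH^{1,p}(\R_+;\rL^q(\Omega)) \cap \rL^p(\R_+;\rH^{2,q}_\per(\Omega))$ and whose norm is dominated by $\|f(0)\|_{\rB^{2(1-\nicefrac{1}{p})}_{q,p}(\Omega)} + \|f\|_{\E_{1,h,\tau}}$ with a constant \emph{independent of $\tau$}. Once such a $\tilde f$ is available, the universal-constant version of \cite[Prop.~6.4]{DHP:07} on $\R_+$, combined with the bounded restriction $\F_{h,\R_+} \to \F_{h,\tau}$, yields the asserted estimate.

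To construct $\tilde f$, I would split the initial value off. Let $v$ be a maximal $\rL^p$-regularity realization of $f(0)$ on $\R_+$, obtained, for instance, by using the lateral periodicity to view $f(0)$ as an element of $\rB^{2(1-\nicefrac{1}{p})}_{q,p}(\mathbb{T}^2 \times (0,1))$, Stein-extending in the vertical direction to $\mathbb{T}^2 \times \R$, applying the analytic semigroup generated by $\Delta - 1$ on the enlarged domain, and restricting back to $\Omega$. This yields $v \in \rH^{1,p}(\R_+;\rL^q(\Omega)) \cap \rL^p(\R_+;\rH^{2,q}_\per(\Omega))$ with $v(0) = f(0)$ and $\|v\| \le C\|f(0)\|_{\rB^{2(1-\nicefrac{1}{p})}_{q,p}(\Omega)}$ for universal $C$. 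The remainder $w \coloneqq f - v|_{(0,\tau)}$ then lies in $\E_{1,h,\tau}$ with vanishing initial trace, so I would define
\begin{equation*}
\tilde w(t) \coloneqq \begin{cases} w(t), & t \in [0,\tau],\\ w(2\tau - t), & t \in [\tau, 2\tau],\\ 0, & t > 2\tau,\end{cases}
\end{equation*}
which belongs to $\rH^{1,p}(\R_+;\rL^q(\Omega)) \cap \rL^p(\R_+;\rH^{2,q}_\per(\Omega))$ precisely because $w(0) = 0 = \tilde w(2\tau)$ and the reflection preserves both time and space derivatives, with $\|\tilde w\| \le 2^{\nicefrac{1}{p}}\|w\|_{\E_{1,h,\tau}}$. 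Setting $\tilde f \coloneqq v + \tilde w$ then delivers the required extension.

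The main obstacle is precisely the $\tau$-uniformity of the constant. A one-piece Stein-type extension of $f$ past $\tau$ would require a smooth time cutoff whose derivative scales like $1/\tau$, which destroys the bound, and controlling $f(\tau)$ directly in $\rB^{2(1-\nicefrac{1}{p})}_{q,p}(\Omega)$ by $\|f\|_{\E_{1,h,\tau}}$ would itself bring in a $\tau$-dependent constant. The splitting $f = v + w$ circumvents both issues: the semigroup realization $v$ is a global object on $\R_+$ carrying universal maximal regularity constants, while the remainder $w$ vanishes at $t = 0$ and may therefore be reflected across $\tau$ and extended by zero beyond $2\tau$ without any cutoff. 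This is exactly the mechanism producing the $\tau$-independent constant asserted in \autoref{lem:est in Linfty Besov} as well.
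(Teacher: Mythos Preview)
Your proposal is correct and follows the same route as the paper: both reduce the lemma to \cite[Prop.~6.4]{DHP:07} via the periodic localization procedure recalled before \autoref{lem:lin opt data}. The paper does not spell out any further argument, whereas you supply the explicit extension-to-$\R_+$ construction (split off the initial value by a semigroup realization, reflect the zero-trace remainder) that secures the $\tau$-independent constant; this is precisely the standard mechanism behind such uniform trace estimates and is the level of detail the paper leaves to the reader.
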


A proof of the lemma below on an estimate of a bilinear boundary term can be found in \cite[Prop.~2.7]{HMTT:19}.

\begin{lem}\label{lem:est of bilin term on boundary}
Consider $p \in (2,\infty)$, $q \in (3,\infty)$ and $s \in (0,1)$ with $s + \nicefrac{1}{p} < 1$, let $\tau > 0$, and take into account Banach spaces $U_1$, $U_2$, $U_3$ as well as a bounded bilinear map $\Phi \colon U_1 \times U_2 \to U_3$.
Moreover, let~$f \in \rF_{p,q}^s(0,\tau;U_1)$ and $g \in \rH^{1,p}(0,\tau;U_2)$ with $g(0) = 0$.
Then there exist $\delta = \delta(p,q,s) > 0$ and a constant $C > 0$, which can be chosen independent of $\tau$, such that
\begin{equation*}
    \| \Phi(f,g) \|_{\rF_{p,q}^s(0,\tau;U_3)} \le C \tau^{\delta} \cdot \| f \|_{\rF_{p,q}^s(0,\tau;U_1)} \cdot \| g \|_{\rH^{1,p}(0,\tau;U_2)}.
\end{equation*}
\end{lem}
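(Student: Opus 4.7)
The plan is to reduce, via bilinearity, to a scalar pointwise multiplier estimate on $\rF_{p,q}^s$, and to exploit $g(0) = 0$ in order to harvest the positive power of~$\tau$. Boundedness of $\Phi$ yields the pointwise bound $\|\Phi(f,g)\|_{U_3} \le \|\Phi\|\,\|f\|_{U_1}\|g\|_{U_2}$, so matters reduce to the scalar product $fg$ valued in~$U_3$; I abbreviate $\Phi(f,g)$ by~$fg$ in the sequel.

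First I would upgrade $g$ to a Hölder class. Since $p > 2 > 1$, the Sobolev embedding in time gives $\rH^{1,p}(0,\tau;U_2) \hookrightarrow \rC^{1-\nicefrac{1}{p}}([0,\tau];U_2)$ with embedding constant independent of~$\tau$ (obtained by extending to the full line via a scale-invariant reflection operator). The vanishing $g(0) = 0$ together with the fundamental theorem of calculus and Hölder's inequality further yields $\|g(t)\|_{U_2} \le t^{1-\nicefrac{1}{p}}\|\dt g\|_{\rL^p(0,t;U_2)}$, hence in particular
$$\|g\|_{\rL^\infty(0,\tau;U_2)} \le \tau^{1-\nicefrac{1}{p}}\|g\|_{\rH^{1,p}(0,\tau;U_2)}.$$
This is the seed of the factor $\tau^\delta$.

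Next comes the pointwise multiplier theorem in $\rF_{p,q}^s$. Since $s + \nicefrac{1}{p} < 1$, one can fix an intermediate Hölder exponent $\alpha_0$ with $s < \alpha_0 < 1-\nicefrac{1}{p}$; then multiplication by any fixed $h \in \rC^{\alpha_0}([0,\tau];U_2)$ is bounded as an operator $\rF_{p,q}^s(0,\tau;U_1) \to \rF_{p,q}^s(0,\tau;U_3)$ with operator norm controlled by a constant times $\|h\|_{\rC^{\alpha_0}}$, the constant being uniform in~$\tau$. Applying this with $h = g$ and interpolating $\rC^{\alpha_0}$ between $\rL^\infty$ and $\rC^{1-\nicefrac{1}{p}}$, with $\theta = \alpha_0/(1-\nicefrac{1}{p}) \in (0,1)$, yields
$$\|g\|_{\rC^{\alpha_0}([0,\tau];U_2)} \le C\|g\|_{\rL^\infty(0,\tau;U_2)}^{1-\theta}\|g\|_{\rC^{1-\nicefrac{1}{p}}([0,\tau];U_2)}^{\theta} \le C\tau^{(1-\theta)(1-\nicefrac{1}{p})}\|g\|_{\rH^{1,p}(0,\tau;U_2)},$$
so that setting $\delta := (1-\theta)(1-\nicefrac{1}{p}) > 0$ closes the estimate.

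The main obstacle is the $\tau$-uniformity of the multiplier constant. For the Besov analogue this is transparent from the intrinsic Slobodeckij seminorm, but the Triebel-Lizorkin scale requires more care: one has to rely either on a reflection-type extension operator to the full line whose norm does not blow up as $\tau \to 0$, or on the atomic/Triebel decomposition of $\rF_{p,q}^s$ combined with paraproduct estimates in the Runst–Sickel framework. Once this uniform multiplier bound is in place, the interpolation above supplies the desired positive power of~$\tau$ cleanly.
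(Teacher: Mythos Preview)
The paper does not prove this lemma itself; it simply cites \cite[Prop.~2.7]{HMTT:19} for a proof, so there is no in-paper argument to compare against. Your sketch is a correct and standard route to the result: the Morrey embedding $\rH^{1,p}\hookrightarrow\rC^{1-\nicefrac{1}{p}}$ together with $g(0)=0$ supplies the factor $\tau^{1-\nicefrac{1}{p}}$ on $\|g\|_{\rL^\infty}$, the $\rC^{\alpha_0}$-pointwise-multiplier bound on $\rF^s_{p,q}$ for $s<\alpha_0<1-\nicefrac{1}{p}$ is the right tool, and the H\"older-seminorm interpolation yields $\delta=(1-\nicefrac{1}{p})-\alpha_0>0$. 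You also correctly isolate the only genuinely delicate point, the $\tau$-uniformity of the multiplier constant on the interval; handling this via zero-extension of $g$ (legitimate since $g(0)=0$) and a $\tau$-independent extension of $f$ to the half-line is exactly how the cited reference proceeds.
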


Next, we collect estimates of the unknowns.
For this purpose, let $0<\tau< \infty$. 
We define the full solution space
\begin{equation}\label{eq:full sol space}
    \E_{1,\tau} \coloneqq \E_{1,\rd,\tau} \times \E_{1,u,\tau} \times \E_{1,T,\tau} \times \E_{1,\rv,\tau} \times \E_{1,\rc,\tau} \times \E_{1,\rr,\tau}.
\end{equation}

For brevity, we also introduce dedicated pieces of notation for the full data space and space for the boundary data, namely $\E_{0,\tau} \coloneqq \E_{0,\rd,\tau} \times \E_{0,u,\tau} \times (\E_{0,h,\tau})^4$ and $\F_{\tau} \coloneqq  (\F_{h,\tau})^4$.
For later reference, we will also use 
\begin{equation*}
    \rX_\gamma \coloneqq \rH_\per^{1,q}(\Omega) \times \rB_{q,p,\per}^{2(1-\nicefrac{1}{p})}(\Omega)^3 \times \rB_{q,p,\per}^{2(1-\nicefrac{1}{p})}(\Omega)^4
\end{equation*}
for the trace space, which is the suitable space for the initial data.

The auxiliary lemma on estimates of the variables then reads as follows.
The assertions of~(i) can be deduced from using H\"older's inequality in time.
The first estimate in (ii) is implied by \autoref{lem:est in Linfty Besov} together with the Sobolev embedding $\rB_{q,p}^{2(1-\nicefrac{1}{p})}(\Omega) \hookrightarrow \rH^{1,q}(\Omega)$ thanks to $p > 2$.
The second and third estimate in~(ii) follow from the first one together with complex interpolation and the embedding $\rH^{s,q}(\Omega) \hookrightarrow \rL^\infty(\Omega)$ for $s > \nicefrac{3}{q}$, respectively.

\begin{lem}\label{lem:ests of vars}
Consider $p \in (2,\infty)$, $q \in (3,\infty)$, and let $(\trho_\rd,\tu,\tT,\tq_\rv,\tq_\rc,\tq_\rr) \in \E_{1,\tau}$ be a solution to \eqref{eq:complete lin syst in Lagrangian coords} with data as made precise in \autoref{prop:opt data result moisture compre NSE}.
Then there is $C > 0$, independent of $\tau$, such that
\begin{enumerate}[(i)]
    \item the density $\trho_\rd$ satisfies $\| \trho_\rd -\rho_{\rd,0} \|_{\rL_\tau^\infty(\rH_x^{1,q})} \le C \tau ^{\nicefrac{1}{p^{'}}}$, $\| \trho_\rd \|_{\rL_\tau^\infty(\rH_x^{1,q})} \le C$ and $\| \trho_\rd \|_{\rL_\tau^p(\rH_x^{1,q})} \le C \tau^{\nicefrac{1}{p}}$,
    \item the fluid velocity $\tu$ fulfills $\| \tu \|_{\rL_\tau^\infty(\rH_x^{1,q})} \le C$ as well as $\| \tu \|_{\rL_\tau^p(\rH_x^{1+s,q})} \le C \tau^{\nicefrac{(1-s)}{2p}}$ for $s \in (0,1)$, and $\| \tu \|_{\rL_\tau^p(\rL_x^\infty)} + \| \nabla \tu \|_{\rL_\tau^p(\rL_x^\infty)} \le C \tau^{\nicefrac{(1-s)}{2p}}$ for $s \in (\nicefrac{3}{q},1)$,
    \item for the temperature $\tT$, it holds that $\| \tT \|_{\rL_\tau^\infty(\rH_x^{1,q})} \le C$ and
    \begin{equation*}
        \begin{aligned}
            \| \tT \|_{\rL_\tau^p(\rH_x^{1+s,q})} 
            &\le C \tau^{\nicefrac{(1-s)}{2p}} \tfor s \in (0,1), \tand\\
            \| \nabla \tT \|_{\rL_\tau^p(\rL_x^\infty)} 
            &\le C \tau^{\nicefrac{(1-s)}{2p}} \tfor s \in (\nicefrac{3}{q},1), \tand
        \end{aligned}
    \end{equation*}    
    \item the estimates for $\tT$ as stated in~(iii) remain valid for the mixing ratios $\tq_\rv$, $\tq_\rc$ and $\tq_\rr$.
\end{enumerate}
\end{lem}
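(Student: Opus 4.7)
The plan is to derive every bound by combining the $\tau$-uniform a priori estimate in $\E_{1,\tau}$ given by \autoref{prop:opt data result moisture compre NSE} with the $\tau$-uniform trace-space embedding of \autoref{lem:est in Linfty Besov}, and to extract the positive power of $\tau$ via H\"older's inequality in time together with complex interpolation in space. Throughout, I restrict to $\tau \in (0,T_0]$ for a fixed $T_0 > 0$, so that the constants arising from \autoref{prop:opt data result moisture compre NSE} can be chosen independent of $\tau$.

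For part (i), I would start from the identity $\trho_\rd(t) - \rho_{\rd,0} = \int_0^t \dt \trho_\rd(s) \srd s$, take the $\rH^{1,q}$-norm, and apply H\"older in time with conjugate exponents $p,p'$ to obtain
\[
    \sup_{t \in (0,\tau)} \| \trho_\rd(t) - \rho_{\rd,0} \|_{\rH^{1,q}} \le \tau^{1/p'} \| \dt \trho_\rd \|_{\rL^p_\tau(\rH^{1,q})} \le C \tau^{1/p'},
\]
where the final inequality uses \autoref{prop:opt data result moisture compre NSE}. The second estimate then follows from the triangle inequality with $\| \rho_{\rd,0} \|_{\rH^{1,q}}$, and the third from applying H\"older once more to pass from $\rL^\infty_\tau$ to $\rL^p_\tau$.

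For part (ii), the bound $\| \tu \|_{\rL^\infty_\tau(\rH^{1,q})} \le C$ is obtained by applying the vector-valued version of \autoref{lem:est in Linfty Besov} to $\tu$, which gives $\sup_{t \in (0,\tau)} \| \tu(t) \|_{\rB^{2(1-1/p)}_{q,p}} \le C$, together with the Sobolev embedding $\rB^{2(1-1/p)}_{q,p}(\Omega) \hookrightarrow \rH^{1,q}(\Omega)$, valid since $2(1-1/p) > 1$ for $p > 2$. For the intermediate Sobolev bound, complex interpolation in the form $\rH^{1+s,q}(\Omega) = [\rL^q(\Omega), \rH^{2,q}(\Omega)]_{(1+s)/2}$ yields pointwise in time $\| \tu(t) \|_{\rH^{1+s,q}} \le \| \tu(t) \|_{\rL^q}^{(1-s)/2} \| \tu(t) \|_{\rH^{2,q}}^{(1+s)/2}$. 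Raising to the $p$-th power, integrating over $(0,\tau)$, and applying H\"older with conjugate exponents $2/(1+s)$ and $2/(1-s)$ produce the factor $\tau^{(1-s)/(2p)}$ in front of $\| \tu \|_{\rL^\infty_\tau(\rL^q)}^{(1-s)/2} \| \tu \|_{\rL^p_\tau(\rH^{2,q})}^{(1+s)/2}$, both of which are controlled by $C$ using the first bound of (ii) (via $\rH^{1,q} \hookrightarrow \rL^q$) and \autoref{prop:opt data result moisture compre NSE}, respectively. The $\rL^p_\tau(\rL^\infty)$-bound for $s \in (3/q,1)$ finally follows from the Sobolev embedding $\rH^{s,q}(\Omega) \hookrightarrow \rL^\infty(\Omega)$ applied to both $\tu$ and $\nabla \tu$.

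Parts (iii) and (iv) are proved by an identical argument to (ii), since $\tT$ and the $\tq_j$ lie in the scalar maximal regularity space $\E_{1,h,\tau}$ and inherit the same $\tau$-uniform bounds from \autoref{prop:opt data result moisture compre NSE} and \autoref{lem:est in Linfty Besov}. The main subtlety I anticipate is the choice of interpolation anchor in part (ii): interpolating $\rH^{1+s,q}$ between $\rH^{1,q}$ and $\rH^{2,q}$ would only yield the weaker factor $\tau^{(1-s)/p}$. Anchoring at $\rL^q$ instead, and using the $\tau$-uniform control of $\tu$ in $\rL^\infty_\tau(\rL^q)$ coming from the first bound of (ii), is what produces the sharper exponent $\tau^{(1-s)/(2p)}$ claimed in the lemma and required to close the nonlinear estimates in \autoref{ssec:nonlin ests}.
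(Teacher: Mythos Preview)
Your proposal is correct and follows essentially the same approach as the paper, which only sketches the proof by pointing to H\"older's inequality in time for~(i), to \autoref{lem:est in Linfty Besov} together with the Sobolev embedding $\rB^{2(1-\nicefrac{1}{p})}_{q,p}(\Omega) \hookrightarrow \rH^{1,q}(\Omega)$ for the first estimate in~(ii), and to complex interpolation plus the embedding $\rH^{s,q}(\Omega) \hookrightarrow \rL^\infty(\Omega)$ for $s > \nicefrac{3}{q}$ for the remaining estimates. Your explicit choice of interpolation anchor $\rL^q$ (rather than $\rH^{1,q}$) and the accompanying H\"older-in-time computation are exactly the details needed to produce the sharper exponent $\tau^{\nicefrac{(1-s)}{2p}}$, which the paper's sketch leaves implicit.
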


We complete the preparation of the nonlinear estimates with the subsequent lemma on estimates of the coordinate transform to Lagrangian coordinates.
First, we deduce from \eqref{eq:CP Lagrange} and the change of coordinates as specified in \autoref{sec:Lagrangian coords} that the diffeomorphism $X$ takes the shape
\begin{equation}\label{eq:shape of X}
    X(t,y) = y + \int_0^t \tu(s,y) \srd s.
\end{equation}
Furthermore, let us recall the pieces of notation $Y(t,\cdot) = [X(t,\cdot)]^{-1}$ as well as~$Z(t,y) = Z_{i,j} = [\nabla X]^{-1}(t,y)$.
In the following lemma, we will also justify the existence of $Z$ for sufficiently small time.
The proof of the lemma follows by standard arguments, and we refer for instance to \cite[Sec.~2.3]{HMTT:19} for further details.

\begin{lem}\label{lem:ests of trafo}
Let $p \in (2,\infty)$, $q \in (3,\infty)$, and recall that $X$ is associated with $\tu \in \E_{1,u,\tau}$.
\begin{enumerate}[(i)]
    \item There is $C > 0$, independent of $\tau$, such that
    \begin{equation*}
        \sup_{t \in (0,\tau)} \| \nabla X(t,\cdot) - \mathrm{Id}_3 \|_{\rL_x^\infty} \le C \cdot \sup_{t \in (0,\tau)} \| \nabla X(t,\cdot) - \mathrm{Id}_3 \|_{\rH_x^{1,q}} \le C \tau^{\nicefrac{1}{p'}}.
    \end{equation*}
    Thus, there is $\Tilde{\tau}$ such that for $0 < t < \tau \le \Tilde{\tau}$, we have $\| \nabla X(t,\cdot) - \mathrm{Id}_3 \|_{\rL_x^\infty} \le \nicefrac{1}{2}$, so~$\nabla X(t,\cdot)$ is invertible, and $Z = [\nabla X]^{-1}$ is thus well-defined.
    \item For $0 < \tau \le \Tilde{\tau}$, with $\Tilde{\tau}$ as made precise in~(i), there exists $C > 0$, independent of $\tau$, so that
    \begin{equation*}
        \begin{aligned}
            \| Z \|_{\rH_\tau^{1,p}(\rH_x^{1,q})} + \| Z \|_{\rL_\tau^\infty(\rH_x^{1,q})} + \| \nabla X \|_{\rH_\tau^{1,p}(\rH_x^{1,q})} + \| \nabla X \|_{\rL_\tau^\infty(\rH_x^{1,q})} \le C.
        \end{aligned}
    \end{equation*}
    \item Similar estimates as in (i) are valid for $Z$ and $Z^\top$, i.e., for some $\tau$-independent constant $C > 0$, we have
    \begin{equation*}
        \sup_{t \in (0,\tau)} \| Z(t,\cdot) - \mathrm{Id}_3 \|_{\rH_x^{1,q}} + \sup_{t \in (0,\tau)} \| Z^\top(t,\cdot) - \mathrm{Id}_3 \|_{\rH_x^{1,q}} \le C \tau^{\nicefrac{1}{p'}}.
    \end{equation*}
    \item For all $j$, $k$, $l = 1,2,3$, there is $C > 0$, independent of $\tau$, such that~$\bigl\| \frac{\del Z_{l,j}}{\del y_k} \bigr\|_{\rL_\tau^\infty(\rL_x^q)} \le C \tau^{\nicefrac{1}{p'}}$.
\end{enumerate}
\end{lem}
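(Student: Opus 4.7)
The overall strategy is to start from the explicit representation $X(t,y) = y + \int_0^t \tu(s,y)\srd s$ given in \eqref{eq:shape of X} and to derive every estimate from H\"older's inequality in time (which is what produces the powers of $\tau^{\nicefrac{1}{p'}}$), the Sobolev embedding $\rH^{1,q}(\Omega) \hookrightarrow \rL^\infty(\Omega)$ valid for $q > 3$, and the fact that $\rH^{1,q}(\Omega)$ is a Banach algebra under pointwise multiplication. The factor $\tau^{\nicefrac{1}{p'}}$ appearing in parts (i), (iii) and (iv) is the smallness that ultimately drives the contraction argument of \autoref{sec: local}.

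For (i), I differentiate \eqref{eq:shape of X} in $y$ to get $\nabla X(t,\cdot) - \mathrm{Id}_3 = \int_0^t \nabla \tu(s,\cdot) \srd s$ and then apply H\"older's inequality in time:
\[
\sup_{t\in (0,\tau)}\|\nabla X(t,\cdot) - \mathrm{Id}_3\|_{\rH_x^{1,q}} \le \tau^{\nicefrac{1}{p'}}\|\nabla \tu\|_{\rL_\tau^p(\rH_x^{1,q})} \le C\, \tau^{\nicefrac{1}{p'}}\|\tu\|_{\E_{1,u,\tau}}.
\]
Composing with the Sobolev embedding yields the $\rL^\infty_x$-bound and thus the full chain in (i). Choosing $\Tilde{\tau}>0$ so small that $C\Tilde{\tau}^{\nicefrac{1}{p'}}\le \nicefrac{1}{2}$ then gives $\|\nabla X - \mathrm{Id}_3\|_{\rL^\infty_x}\le \nicefrac{1}{2}$, so $\nabla X(t,\cdot)$ is pointwise invertible via a Neumann series and $Z$ is well-defined on $[0,\Tilde\tau]$.

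For (ii), the $\rL_\tau^\infty(\rH_x^{1,q})$- and $\rH_\tau^{1,p}(\rH_x^{1,q})$-bounds on $\nabla X$ follow immediately from (i), the triangle inequality, and $\dt \nabla X = \nabla \tu \in \rL_\tau^p(\rH_x^{1,q})$. For $Z$, the identity $Z - \mathrm{Id}_3 = -Z(\nabla X - \mathrm{Id}_3)$ combined with the algebra property of $\rH^{1,q}$ gives $\|Z - \mathrm{Id}_3\|_{\rH_x^{1,q}}\le C\|Z\|_{\rH_x^{1,q}}\|\nabla X - \mathrm{Id}_3\|_{\rH_x^{1,q}}$, and thanks to the $\tau^{\nicefrac{1}{p'}}$ factor from (i), the $\|Z\|_{\rH_x^{1,q}}$ term can be absorbed, yielding the $\rL_\tau^\infty(\rH_x^{1,q})$-bound on $Z$ and, along the way, also part (iii) for $Z$ (and symmetrically for $Z^\top$ via $Z^\top - \mathrm{Id}_3 = -(\nabla X - \mathrm{Id}_3)^\top Z^\top$). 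Differentiating $Z\cdot\nabla X = \mathrm{Id}_3$ in time then produces $\dt Z = -Z(\nabla\tu) Z$, whose $\rL_\tau^p(\rH_x^{1,q})$-norm is controlled by $\|Z\|_{\rL_\tau^\infty(\rH_x^{1,q})}^{2}\|\nabla\tu\|_{\rL_\tau^p(\rH_x^{1,q})}$ via the algebra property.

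For (iv), I spatially differentiate $Z\cdot\nabla X = \mathrm{Id}_3$ to obtain $\del_{y_k} Z = -Z\,(\del_{y_k}\nabla X)\,Z$, and estimate in $\rL_x^q$ using the $\rL^\infty_x$-bound on $Z$ (coming from (ii) and Sobolev embedding) together with
\[
\|\nabla^2 X\|_{\rL_\tau^\infty(\rL_x^q)} \le \tau^{\nicefrac{1}{p'}}\|\nabla^2 \tu\|_{\rL_\tau^p(\rL_x^q)} \le C\,\tau^{\nicefrac{1}{p'}}.
\]
The main obstacle, though technical rather than conceptually deep, is keeping the constant $C$ independent of $\tau$ throughout, which hinges on bounding $\|\tu\|_{\E_{1,u,\tau}}$ uniformly in $\tau$ via \autoref{lem:est in Linfty Besov} and on ensuring that the absorption step for $Z$ operates purely under the $\rL^\infty$ smallness $\|\nabla X - \mathrm{Id}_3\|_{\rL^\infty_x}\le \nicefrac{1}{2}$, so that no hidden dependence on the final time slips into the subsequent algebra-based estimates.
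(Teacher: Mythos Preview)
Your argument is correct and is precisely the standard route the paper has in mind: the paper does not spell out a proof but simply states that it ``follows by standard arguments'' and points to \cite[Sec.~2.3]{HMTT:19}, where exactly the ingredients you use---the integral representation $\nabla X-\mathrm{Id}_3=\int_0^t\nabla\tu\,\mathrm{d}s$, H\"older in time, the Sobolev embedding and Banach algebra property of $\rH^{1,q}$ for $q>3$, the Neumann series for $Z$, and the identities $\dt Z=-Z(\nabla\tu)Z$ and $\del_{y_k}Z=-Z(\del_{y_k}\nabla X)Z$---are carried out in detail. There is nothing to add.
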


In the following, we estimate the nonlinear terms, which result from changing to Lagrangian coordinates, and which have been calculated in \autoref{sec:Lagrangian coords}, by a constant that is shrinking in $\tau$.

\begin{lem}\label{lem: estimate non lin}
Let $p \in (2,\infty)$, $q \in (3,\infty)$, and consider a solution
\begin{equation*}
    \tz_1 \coloneqq (\trho_{\rd,1},\tu_1,\tT_1,\tq_{\rv,1},\tq_{\rc,1},\tq_{\rr,1}) \in \E_{1,\tau},
\end{equation*}
with $\tau \le \Tilde{\tau}$, to the linearized problem \eqref{eq:complete lin syst in Lagrangian coords} with data $(f_\rd,f_u,f_T,f_j)$, $(g_{T,i},q_{j,i})$, $(\rho_{\rd,0},u_0,T_0,q_{j,0})$, for $j \in \{\rv,\rc,\rr\}$, $i=1,2$, as made precise in \autoref{prop:opt data result moisture compre NSE}.

Then there exists a constant $C(\tau)$, which only depends on $p$, $q$ as well as $\Tilde{\tau}$ and satisfies $C(\tau) \to 0$ as~$\tau \to 0$, such that for the terms $G(\tz_1)$ and $B_i(\tz_1)$ introduced in \autoref{sec:Lagrangian coords}, we have
\begin{equation*}
    \| G(\tz_1) \|_{\E_{0,\tau}} + \sum_{i=1}^2 \| B_i(\tz_1) \|_{\F_\tau} \le C(\tau).
\end{equation*}
\end{lem}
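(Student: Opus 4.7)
The plan is to estimate each component of $G$ and each $B_i$ separately, always extracting an explicit factor that is small in $\tau$. Every nonlinear term in $G$ either contains a difference of the form $Z - \mathrm{Id}_3$, $\nabla Z$, $\tilde{\rho}_\rd - \rho_{\rd,0}$ or $\tilde{\rho}_\rd \tilde{Q}_\rm - \rho_{\rd,0} Q_{\rm,0}$, all of which vanish at $t=0$ and are controlled in $\rL^\infty_\tau(\rL^\infty_x)$ by $C \tau^{\nicefrac{1}{p'}}$ via \autoref{lem:ests of trafo}, or it is a polynomial (possibly composed with $(\cdot)^+$, which is $1$-Lipschitz) of variables which are already bounded in $\rL^\infty_\tau(\rL^\infty_x)$ by \autoref{lem:est in Linfty Besov} combined with the Sobolev embedding $\rB^{2(1-\nicefrac{1}{p})}_{q,p}(\Omega) \hookrightarrow \rL^\infty(\Omega)$ (which is applicable since $\nicefrac{2}{p}+\nicefrac{3}{q}<1$). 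For such polynomial source terms, the bound in $\rL^p_\tau(\rL^q_x)$ comes from $\|1\|_{\rL^p_\tau(\rL^q_x)} = \tau^{\nicefrac{1}{p}} |\Omega|^{\nicefrac{1}{q}}$.

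First, I would handle $G_\rd$: writing $G_\rd = -(\tilde{\rho}_\rd-\rho_{\rd,0})\mdiv \tilde{u} - \tilde{\rho}_\rd \nabla \tilde{u} : [Z^\top-\mathrm{Id}_3]$, the $\rH^{1,q}$-algebra structure together with \autoref{lem:ests of vars}(i), (ii) and \autoref{lem:ests of trafo}(iii) yields both pieces in $\rL^p_\tau(\rH^{1,q}_x)$ with norm $O(\tau^{\nicefrac{1}{p'}})$ (using H\"older in time after placing $\tilde{\rho}_\rd - \rho_{\rd,0}$ or $Z^\top-\mathrm{Id}_3$ in $\rL^\infty_\tau(\rH^{1,q}_x)$ and the remaining factor in $\rL^p_\tau(\rH^{1,q}_x)$). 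Next, for $G_u$, the prefactor $1 - \frac{\tilde{\rho}_\rd \tilde{Q}_\rm}{\rho_{\rd,0} Q_{\rm,0}}$ in front of $\partial_t \tilde{u}$ vanishes at $t=0$ and is small in $\rL^\infty_\tau(\rL^\infty_x)$ by the algebra property plus the initial condition, so the product lies in $\rL^p_\tau(\rL^q_x)$ with a $\tau^{\nicefrac{1}{p'}}$ factor; the operator differences $(\cL_1 - \Delta)\tilde{u}$ and $(\cL_2 - \nabla\mdiv)\tilde{u}$ are estimated using \eqref{eq:transformed Laplacian minus Laplacian}, \eqref{eq:transformed second comp Lame} by placing $Z - \mathrm{Id}_3$ in $\rL^\infty_\tau(\rL^\infty_x)$ against $\nabla^2 \tilde{u} \in \rL^p_\tau(\rL^q_x)$, and $\nabla Z$ in $\rL^\infty_\tau(\rL^q_x)$ (via \autoref{lem:ests of trafo}(iv)) against $\nabla \tilde{u} \in \rL^p_\tau(\rL^\infty_x)$ (via \autoref{lem:ests of vars}(ii)); the remaining first-order coupling terms are routine products of bounded factors. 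The analogous reasoning applies to $G_T$, where the switch-type terms $(\tilde{q}_\rv - \tilde{q}_\rvs)^+ \tilde{q}_\rc$, $\tilde{T} \tilde{q}_\rr (\tilde{q}_\rvs - \tilde{q}_\rv)^+$ are bounded by products in $\rL^\infty_\tau(\rL^\infty_x)$, giving an $\rL^p_\tau(\rL^q_x)$ bound of order $\tau^{\nicefrac{1}{p}}$; the same strategy takes care of $G_\rv$, $G_\rc$, $G_\rr$ (with the sedimentation contribution $\frac{1}{\tilde{\rho}_\rd} \tilde{q}_\rr \tilde{V}_\rr \partial_{y_l} \tilde{\rho}_\rd \, Z_{l,3}$ in $G_\rr$ treated by the lower bound $\tilde{\rho}_\rd \ge M_1^*$ and $\partial_y \tilde{\rho}_\rd \in \rL^\infty_\tau(\rL^q_x)$, paired with $\tilde{q}_\rr \tilde{V}_\rr \in \rL^\infty_\tau(\rL^\infty_x)$).

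For the boundary data $B_i$ in $\F_\tau = \rF^{\nicefrac{1}{2}-\nicefrac{1}{2q}}_{p,q}(\rL^q(G)) \cap \rL^p(\rB^{1-\nicefrac{1}{q}}_{q,q,\per}(G))$, the structure is
\[
B_{T,i} = (1-Z_{3,3})\del_{y_3}\tilde{T} - \sum_{k=1}^2 Z_{k,3}\del_{y_k}\tilde{T},
\]
and analogously for $B_{q_j,i}$. Here each trace $(1-Z_{3,3})|_{\Gamma_u \cup \Gamma_l}$ and $Z_{k,3}|_{\Gamma_u \cup \Gamma_l}$ ($k=1,2$) vanishes at $t=0$ (since $Z(0)=\mathrm{Id}_3$) and lies in $\rH^{1,p}(0,\tau;\rL^\infty(G))$ by \autoref{lem:ests of trafo}(ii), the $\rH^{1,q}$-algebra property and the embedding $\rH^{1,q}(\Omega) \hookrightarrow \rL^\infty(\Omega)$ for $q>3$ followed by restriction to $G$. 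The tangential derivatives $\del_{y_k}\tilde{T}|_G$ and the normal derivative (shifted off the boundary with any standard trace-retraction) lie in $\rF^{\nicefrac{1}{2}-\nicefrac{1}{2q}}_{p,q}(0,\tau;\rL^q(G)) \cap \rL^p(0,\tau;\rB^{1-\nicefrac{1}{q}}_{q,q,\per}(G))$ by \autoref{lem:est of normal derivative}. Applying \autoref{lem:est of bilin term on boundary} with $U_1 = U_3 = \rL^q(G)$, $U_2 = \rL^\infty(G)$ and $\Phi$ the pointwise product gives a $\tau^\delta$-small bound in the Triebel-Lizorkin component. The $\rL^p_\tau(\rB^{1-\nicefrac{1}{q}}_{q,q,\per}(G))$ component is controlled directly by H\"older in time, placing the $Z$-factor (which is $O(\tau^{\nicefrac{1}{p'}})$ in $\rL^\infty_\tau$ with values in an $\rH^{1,q}$-algebra) against the derivatives of $\tilde{T}$ (and analogously for $\tilde{q}_j$) bounded by \autoref{lem:est of normal derivative}.

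Summing the resulting bounds yields the asserted $C(\tau)$ with $C(\tau)\to 0$ as $\tau \to 0$. The main obstacle will be checking that the $\rF^s_{p,q}$-trace structure of $B_i$ tolerates multiplication by a Lagrangian coefficient: this is precisely the point where the vanishing of $Z-\mathrm{Id}_3$ at $t=0$ is essential, since the bilinear lemma requires $g(0)=0$, and where the condition $\nicefrac{2}{p}+\nicefrac{3}{q}<1$ is needed to guarantee enough regularity for pointwise products in $\rH^{1,q}$ and to embed $\rB^{2(1-\nicefrac{1}{p})}_{q,p}$ into $\rL^\infty$. Once these ingredients are in place, the rest is a careful bookkeeping of H\"older exponents in time.
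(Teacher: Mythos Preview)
Your approach is essentially the same as the paper's: treat each piece of $G$ and $B_i$ separately, extract a small-in-$\tau$ factor either from a transformation difference ($Z-\mathrm{Id}_3$, $\nabla Z$, $\trho_\rd-\rho_{\rd,0}$) or from H\"older in time on a bounded polynomial source, and handle the boundary terms with the bilinear lemma plus the trace theorem.

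There is one imprecision worth flagging. You assert that $\trho_\rd\tQ_\rm-\rho_{\rd,0}Q_{\rm,0}$ is $O(\tau^{1/p'})$ in $\rL^\infty_\tau(\rL^\infty_x)$ ``via \autoref{lem:ests of trafo}'', but that lemma only treats $X$ and $Z$; the part $\tQ_\rm-Q_{\rm,0}=\sum_j(\tq_{j,1}-q_{j,0})$ is not covered. The fundamental-theorem-of-calculus argument you have in mind gives only $\|\tq_{j,1}(t)-q_{j,0}\|_{\rL^q_x}\le t^{1/p'}\|\partial_t\tq_{j,1}\|_{\rL^p_\tau(\rL^q_x)}$, since $\partial_t\tq_{j,1}\in\rL^p_\tau(\rL^q_x)$ rather than $\rL^p_\tau(\rL^\infty_x)$. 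The paper closes this gap with a \emph{reference solution} argument: introduce $\tq_j^*$ solving the homogeneous heat equation with initial datum $q_{j,0}$, then use continuity of $t\mapsto\tq_j^*(t)$ in the trace space to get $\|q_{j,0}-\tq_j^*\|_{\rL^\infty_\tau(\rB^{2(1-1/p)}_{q,p})}\to 0$, and use the zero-trace embedding $\prescript{}{0}{\E_{1,h,\tau}}\hookrightarrow\rBUC([0,\tau];\rB^{2(1-1/p)}_{q,p})$ (with $\tau$-independent constant) for $\tq_j^*-\tq_{j,1}$. This yields only $C(\tau)\to 0$ without a rate, which is all the lemma claims. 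Your argument can be repaired either this way or via a mixed-derivative interpolation $\E_{1,h,\tau}\hookrightarrow\rC^{0,\epsilon}_\tau(\rL^\infty_x)$ for small $\epsilon>0$, but the specific exponent $1/p'$ is not available.
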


\begin{proof}
We successively estimate the nonlinear terms $G = (G_\rd,G_{u},G_T,G_\rv,G_\rc,G_\rr)$.
Many terms are of a similar shape, so we only focus on selected terms.

Let us start with the term $G_\rd$.
From the Banach algebra structure of $\rH^{1,q}(\Omega)$ thanks to $q > 3$, \autoref{lem:ests of vars}(i) for the estimate of $\trho_{\rd,1}-\rho_{\rd,0}$ and $\tz_1 \in \E_{1,\tau}$, it first follows that
\begin{equation*}
    \| (\trho_{\rd,1}-\rho_{\rd,0})\mdiv\tu_1 \|_{\rL_\tau^p(\rH_x^{1,q})} \le \| \trho_{\rd,1}-\rho_{\rd,0} \|_{\rL_\tau^\infty(\rH_x^{1,q})} \cdot \| \tu_1 \|_{\rL_\tau^p(\rH_x^{2,q})} \le C \tau^{\nicefrac{1}{p'}}.
\end{equation*}
In the same way, this time employing \autoref{lem:ests of vars}(i) for the boundedness of $\trho_{\rd,1}$, and using \autoref{lem:ests of trafo}(iii) for $Z^\top-\mathrm{Id}_3$, we get
\begin{equation*}
    \begin{aligned}
        \| \trho_{\rd,1}\nabla\tu_1:[Z^\top-\mathrm{Id}_3] \|_{\rL_\tau^p(\rH_x^{1,q})} 
        &\le \| \trho_{\rd,1} \|_{\rL_\tau^\infty(\rH_x^{1,q})} \cdot \| \tu_1 \|_{\rL_\tau^p(\rH_x^{2,q})} \cdot \| Z^\top - \mathrm{Id}_3 \|_{\rL_\tau^\infty(\rH_x^{1,q})}\\
        &\le C \tau^{\nicefrac{1}{p'}}.
    \end{aligned}
\end{equation*}
This shows that $G_\rd$ admits estimates of the desired shape.

The next nonlinear term under consideration is $G_u$.
With regard to the first term of $G_u$, we make use of the boundedness of $\rho_{\rd,0}$ and $q_{j,0}$, $j \in \{\rv,\rc,\rr\}$, from below, implying that $\nicefrac{1}{\rho_{\rd,0} Q_{\rm,0}}$ to derive that
\begin{equation}\label{eq:est of first term of G_u}
    \Bigl\| (1-\frac{\trho_{\rd ,1}\tQ_{\rm,1}}{\rho_{\rd,0}Q_{\rm,0}}\Bigr)\del_t\tu_1 \Bigr\|_{\rL_\tau^p(\rL_x^p)} \le \| \rho_{\rd,0} Q_{\rm,0} - \trho_{\rd,1} \tQ_{\rm,1} \|_{\rL_\tau^\infty(\rL_x^\infty)} \cdot \| \tu_1 \|_{\rH_\tau^{1,p}(\rL_x^q)}.
\end{equation}
The second factor can be controlled by a constant thanks to $\tz_1 \in \E_{1,\tau}$, so it remains to deal with the first one.
For this purpose, we expand it as
\begin{equation}\label{eq:est of first factor first term G_u}
    \begin{aligned}
        \| \rho_{\rd,0} Q_{\rm,0} - \trho_{\rd,1} \tQ_{\rm,1} \|_{\rL_\tau^\infty(\rL_x^\infty)} 
        \le \| (\rho_{\rd,0} - \trho_{\rd,1}) Q_{\rm,0}\|_{\rL_\tau^\infty(\rL_x^\infty)}  + \| \trho_{\rd,1} (Q_{\rm,0} - \tQ_{\rm,1}) \|_{\rL_\tau^\infty(\rL_x^\infty)}.
    \end{aligned}
\end{equation}
By \autoref{lem:ests of vars}(i) as well as the embedding $\rB_{q,p}^{2-\nicefrac{2}{p}}(\Omega) \hookrightarrow \rL^\infty(\Omega)$ thanks to the assumptions on $p$ and~$q$, which yields that $Q_{\rm,0}$ is bounded, we find that the first addend can be bounded by a positive power of $\tau$.
On the other hand, for the second added, we exploit \autoref{lem:ests of vars}(i) for the boundedness of~$\| \trho_{\rd,1} \|_{\rL_\tau^\infty(\rL_x^\infty)}$.
In order to control the remaining factor $\| Q_{\rm,0} - \tQ_{\rm,1} \|_{\rL_\tau^\infty(\rL_x^\infty)}$, we invoke a so-called {\em reference solution}.
More precisely, we first observe that $Q_{\rm,0} - \tQ_{\rm,1} = \sum_{j \in \{\rv,\rc,\rr\}} (q_{j,0} - \tq_{j,1})$.
A reference solution $\tq_j^*$ then is a solution to the heat equation subject with zero right-hand side and subject to homogeneous Neumann boundary conditions, but with initial data $q_{j,0}$, i.e., $\tq_j^*$ solves
\begin{equation*}
    \left\{
    \begin{aligned}
        \dt \tq_j^* - \Delta \tq_j^*
        &= 0, &&\tin (0,\tau) \times \Omega,\\
        \dz \tq_j^*
        &= 0, &&\ton (0,\tau) \times (\Gau \cup \Gal),\\
        \tq_j^*(0)
        &= q_{j,0}, &&\tin \Omega.
    \end{aligned}
    \right.
\end{equation*}
With the notation $\rY_\gamma^j \coloneqq \rB_{q,p}^{2-\nicefrac{2}{p}}(\Omega)$, indicating that, up to boundary conditions, which do not affect the norm, $\rB_{q,p}^{2-\nicefrac{2}{p}}(\Omega)$ is the time trace space associated with the variable $q_j$, it then follows that
\begin{equation}\label{eq:diff init value and ref sol}
    \| q_{j,0} - \tq_j^* \|_{\rL_\tau^\infty(\rL_x^\infty)} \le C \cdot \| q_{j,0} - \tq_j^* \|_{\rL_\tau^\infty(\rY_\gamma^j)} \to 0, \tas \tau \to 0.
\end{equation}
On the other hand, using of the embedding $\prescript{}{0}{\E_{1,h,\tau}} \hookrightarrow \rBUC([0,\tau];\rY_\gamma^j)$, see e.g., \cite[Thm.~III.4.10.2]{Ama:95}, the observation that the embedding constant can be chosen independent from $\tau$ thanks to the homogeneous initial values, and the fact that $\E_{1,h,\tau}$ is an integral norm, we find that
\begin{equation}\label{eq:diff ref sol and sol}
    \| \tq_j^* - \tq_{j,1} \|_{\rL_\tau^\infty(\rL_x^\infty)} \le C \cdot \| \tq_j^* - \tq_{j,1} \|_{\rL_\tau^\infty(\rY_\gamma^j)} \le C \cdot \| \tq_j^* - \tq_{j,1} \|_{\E_{1,h,\tau}} \to 0, \tas \tau \to 0.
\end{equation}
Making use of \eqref{eq:diff init value and ref sol} and \eqref{eq:diff ref sol and sol}, and invoking the above arguments, we find that \eqref{eq:est of first term of G_u} can be estimated in the desired way.

Now we handle the term $\nicefrac{\mu}{(\rho_{\rd,0}Q_{\rm,0})}\sum_{j,k,l=1}^3\frac{\partial^{2}(\tu_1)_i}{\partial{y_{k}}\partial{y_{l}}}(Z_{k,j}-\delta_{k,j})Z_{l,j}$.
The boundedness of $\rho_{\rd,0}$ and $q_{j,0}$, $j \in \{\rv,\rc,\rr\}$, from below together with \autoref{lem:ests of trafo}(iii) and~(ii) leads to
\begin{equation*}
    \begin{aligned}
        \biggl\| \frac{\mu}{\rho_{\rd,0}Q_{\rm,0}}\sum_{j,k,l=1}^3\frac{\partial^{2}(\tu_1)_i}{\partial{y_{k}}\partial{y_{l}}}(Z_{k,j}-\delta_{k,j})Z_{l,j} \biggr\|_{\rL_\tau^p(\rL_x^q)}
        &\le C \cdot \| \tu_1 \|_{\rL_\tau^p(\rH_x^{2,q})} \cdot \| Z - \mathrm{Id}_3 \|_{\rL_\tau^\infty(\rL_x^\infty)} \cdot \| Z \|_{\rL_\tau^\infty(\rL_x^\infty)}\\
        &\le C \cdot \| Z - \mathrm{Id}_3 \|_{\rL_\tau^\infty(\rH_x^{1,q})} \cdot \| Z \|_{\rL_\tau^\infty(\rH_x^{1,q})} \le C \tau^{\nicefrac{1}{p'}}.
    \end{aligned}
\end{equation*}
Let us observe that the terms of $G_u$ which involve differences of $Z$ and $\mathrm{Id}_3$ can be handled in a completely similar manner.
Thus, we next focus on the term~$\nicefrac{\mu}{(\rho_{\rd,0}Q_{\rm,0})} \sum_{j,k,l=1}^3\frac{\partial(\tu_1)_{i}}{\partial{y_{l}}} \frac{\partial Z_{l,j}}{\partial{y_{k}}}Z_{k,j}$.
The factor $\nicefrac{\mu}{(\rho_{\rd,0}Q_{\rm,0})}$ can be handled as above, while for $\tu$, we employ \autoref{lem:ests of vars}(ii), for the derivative of $Z$, we make use of \autoref{lem:ests of trafo}(iv), and we handle $Z_{k,j}$ with \autoref{lem:ests of trafo}(ii).
Hence, we infer that
\begin{equation*}
    \begin{aligned}
        \biggl\| \frac{\mu}{(\rho_{\rd,0}Q_{\rm,0})} \sum_{j,k,l=1}^3\frac{\partial(\tu_1)_{i}}{\partial{y_{l}}} \frac{\partial Z_{l,j}}{\partial{y_{k}}}Z_{k,j} \biggr\|_{\rL_\tau^p(\rL_x^q)}
        &\le C \cdot \| \nabla \tu_1 \|_{\rL_\tau^p(\rL_x^\infty)} \cdot \Bigl\| \frac{\del Z_{l,j}}{\del y_k} \Bigr\|_{\rL_\tau^\infty(\rL_x^q)} \cdot \| Z \|_{\rL_\tau^\infty(\rL_x^\infty)}\\
        &\le C \tau^{\nicefrac{1}{p'}} \cdot \| Z \|_{\rL_\tau^\infty(\rH_x^{1,q})} \le C \tau^{\nicefrac{1}{p'}}.
    \end{aligned}
\end{equation*}
We observe that the term $\nicefrac{(\mu+\lambda)}{(\rho_{\rd,0}Q_{\rm,0})}\sum_{j,k,l=1}^3\frac{\partial\tu_{j}}{\partial{y_{k}}} \frac{\partial Z_{k,j}}{\partial{y_{l}}}Z_{l,i}$ can be handled in the exact same way.

The next term to be estimated is $\frac{1}{\rho_{\rd,0}Q_{\rm,0}} \tT_1(1+\tq_{\rv,1})(Z^\top\nabla \trho_{\rd,1})_{i}$ in $G_u$.
The factor $\nicefrac{1}{(\rho_{\rd,0}Q_{\rm,0})}$ can be handled as above, so we do not include it in the below considerations.
On the other hand, for $\tT$ we employ \autoref{lem:ests of vars}(iii), we deal with~$\tq_{v}$  by \autoref{lem:ests of vars}(iv), \autoref{lem:ests of trafo}(iii) is used for the estimate of $Z^\top$,  and $\trho_{\rd} $ can be handled as 
\eqref{eq:est of first factor first term G_u}.
 Hence, we infer that 
\begin{equation*}
    \begin{aligned}
        \| \tT_1(1+\tq_{\rv,1})(Z^\top\nabla \trho_{\rd,1})_{i}\|_{\rL_\tau^p(\rL_x^{q})}
        &\leq C\cdot\|\tT_{1}\|_{\rL_\tau^p(\rL_x^\infty)}\cdot\|1+\tq_{\rv,1}\|_{\rL_\tau^\infty(\rL_x^\infty)} \cdot\| Z^\top\|_{\rL_\tau^\infty(\rL_x^\infty)}\cdot\|\trho_{d,1}\|_{\rL_\tau^\infty(\rH_x^{1,q})}\\
        &\leq C\tau^{\nicefrac{(1-s)}{2p}}.
    \end{aligned}
\end{equation*}
Now, we estimate the terms in $G_T$.
Note that $\|Q_{\rm,0}-\tQ_{\rm,1}\|_{\rL^\infty_\tau(\rL^\infty_x)}\to 0, \tas \tau \to 0 $ in~\eqref{eq:est of first factor first term G_u} and then observe that 
\begin{equation*}
   \left\|  \Bigl(1-\frac{\tQ_{\rm,1}}{Q_{\rm,0}}\Bigr)\del_t\tT_1\right\|_{\rLp_\tau(\rLq_x)}\leq C\cdot \|Q_{\rm,0}-\tQ_{\rm,1}\|_{\rL^\infty_\tau(\rL^\infty_x)}\|\del_t\tT_1\|_{\rLp_\tau(\rLq_x)}.
\end{equation*} 
The desired estimate can be concluded along the same lines as \eqref{eq:est of first term of G_u}.
Next, consider $(1+\tT_1)(\tq_{\rv,1} - \tq_{\rvs})\tq_{\rc,1}$. For $\tT$, we employ \autoref{lem:ests of vars}(iii) and obtain
\begin{equation*}
    \begin{aligned}
       \left\|(1+\tT_1)(\tq_{\rv,1} - \tq_{\rvs})\tq_{\rc,1}\right\|_{\rL_\tau^p(\rL_x^{q})}
       &\leq \|1+\tT_1\|_{\rL_\tau^p(\rL_x^\infty)}\cdot\|\tq_{\rv,1} - \tq_{\rvs}\|_{\rL_\tau^\infty(\rL_x^q)}\cdot\|\tq_{\rc,1}\|_{\rL^\infty_\tau(\rL^\infty_x)}\\&\leq C (\tau^{\nicefrac{1}{p}}+\tau^{\nicefrac{(1-s)}{2p}})\leq C\tau^\delta.
    \end{aligned}
\end{equation*}
Next, we estimate the singular term $(1+\tT_1)(\tq_{\rv,1} - \tq_{\rvs})^+$.
Arguing as above, and invoking the estimate $\|(\tq_{\rv,1}-\tq_{\rvs})^+\|_{\rL^\infty_\tau(\rLq_x)}\leq \|\tq_{\rv,1}-\tq_{\rvs}\|_{\rL^\infty_\tau(\rLq_x)}$, we infer that 
\begin{equation*}
    \begin{aligned}
       \left\|(1+\tT_1)(\tq_{\rv,1} - \tq_{\rvs})^+\right\|_{\rL_\tau^p(\rL_x^{q})}
       \leq \|1+\tT_1\|_{\rL_\tau^p(\rL_x^\infty)}\cdot\|(\tq_{\rv,1} - \tq_{\rvs})^+\|_{\rL_\tau^\infty(\rL_x^q)}
       \leq C (\tau^{\nicefrac{1}{p}}+\tau^{\nicefrac{(1-s)}{2p}})\leq C\tau^\delta.
    \end{aligned}
\end{equation*}
We observe that the other terms appearing in \( G_T \) can be estimated in the same way.
Similarly, for $G_\rv,G_\rc,G_\rr$, we only give the estimates of singular source term, that is,
 \begin{equation*}\|(\tq_{\rv,1}-\tq_\rvs)^{+}\|_{\rL_\tau^p(\rL_x^{q})}\leq \|\tq_{\rv,1}\|_{\rL_\tau^p(\rL_x^{q})}+C\tau^{\nicefrac{1}{p}}\leq C \tau^\delta, 
 \end{equation*} 
where we use \autoref{lem:ests of vars}(iv) for $\tq_\rv$.

Finally, we consider the boundary terms such as
\begin{equation*}
    (1-Z_{3,3}) \del_{y_3} \tT_{1} - Z_{1,3} \del_{y_2} \tT_{1} - Z_{2,3} \del_{y_3} \tT_{1} \eqqcolon B_{T,1}.
\end{equation*}
Due to the same shape, it is enough to consider the part $(1-Z_{3,3}) \del_{y_3} \tT_{1} $.
Using the trace theorem, employing \autoref{lem:ests of vars}(iii) for $\tT$, making use of \autoref{lem:ests of trafo}(iii) to estimate $\mathrm{Id}_3 - Z$, and invoking that $\tT_{1} \in \E_{1,T,\tau}$, we deduce
\begin{equation*}
    \begin{aligned}
        \| (1-Z_{3,3}) \del_{y_3} \tT_{1} \|_{\rL_\tau^p(\rB_{q,q}^{1 - \nicefrac{1}{q}}(G))}
        &\leq \| (1-Z_{3,3}) \del_{y_3} \tT_{1} \|_{\rL_\tau^p(\rH_x^{1 ,q})}
        &\leq C\tau^{\nicefrac{1}{p'}}.
    \end{aligned}
\end{equation*}
It remains to estimate this term in the second part of the norm of the boundary space. 
Applying \autoref{lem:est of bilin term on boundary} in conjunction with \autoref{lem:ests of trafo}(i) for $\mathrm{Id}_3-Z$ and \autoref{lem:est of normal derivative} for~$\del_{y_3} \tT$, we get
\begin{equation*}
    \begin{aligned}
        \| (1-Z_{3,3}) \del_{y_3} \tT_{1} \|_{\rF_{p,q}^{\nicefrac{1}{2}-\nicefrac{1}{2q}}(\rL_x^q(G))}
        &\leq C\tau^{\delta} \cdot \|\mathrm{Id}_3-Z\|_{\rH_\tau^{1,p}(\rB_{q,q}^{1 - \nicefrac{1}{q}}(G))} \cdot \|\del_{y_3}\tT_{1}\|_{\rF_{p,q}^{\nicefrac{1}{2}-\nicefrac{1}{2q}}(\rL_x^q(G))}\\
        &\leq C\tau^{\delta}\cdot (\|\tT_{1}(0)\|_{\rB_{q,p}^{2(1 - \nicefrac{1}{P})}(\Omega)}+\|\tT_{1}\|_{\E_{1,T,\tau}})
        \leq C\tau^{\delta}.
    \end{aligned}
\end{equation*}

The other transformed terms on the boundary $B_{T,i}(\tz_1),B_{\rv,i}(\tz_1),B_{\rc,i}(\tz_1),B_{\rr,i}(\tz_1)), i\in \{1,2\}$ can be handled analogously.
\end{proof}

In the same way as the preceding lemma, we obtain the following lemma on estimates of differences of the nonlinear terms.

\begin{lem}\label{lem: contraction}
Consider $p \in (2,\infty)$, $q \in (3,\infty)$ as well as $\tz_1$, $\tz_2 \in \E_{1,\tau}$, with $\tau \le \Tilde{\tau}$, where $\tz_k$, $k=1,2$, represents a solution to the linearized problem \eqref{eq:complete lin syst in Lagrangian coords} with data $(f_\rd^k,f_u^k,f_T^k,f_j^k)$, $(b_i^k,g_{T,i}^k,q_{j,i}^k)$, $(\rho_{\rd,0},u_0,T_0,q_{j,0})$, $j \in \{\rv,\rc,\rr\}$, $i=1,2$, satisfying the assumptions as made precise in \autoref{prop:opt data result moisture compre NSE}.

Then there is a constant $C(\tau) > 0$, merely depending on $p$, $q$ and $\Tilde{\tau}$ and fulfilling $C(\tau) \to 0$ as $\tau \to 0$, so that for the nonlinear terms $G(\tz_k)$, $B_i(\tz_k)$, $i=1,2$, $k=1,2$, from \autoref{sec:Lagrangian coords}, it holds that
\begin{equation*}
    \| G(\tz_1) - G(\tz_2) \|_{\E_{0,\tau}} + \sum_{i=1}^2 \| B_i(\tz_1) - B_i(\tz_2) \|_{\F_\tau} \le C(\tau) \cdot \| \tz_1 - \tz_2 \|_{\E_{1,\tau}}.
\end{equation*}
\end{lem}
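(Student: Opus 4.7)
The plan is to run the proof of \autoref{lem: estimate non lin} in a multilinear fashion, writing every product difference via telescoping identities of the form $a_1 b_1 - a_2 b_2 = (a_1 - a_2) b_1 + a_2 (b_1 - b_2)$, so that each expression $G(\tz_1) - G(\tz_2)$ and $B_i(\tz_1) - B_i(\tz_2)$ decomposes into a sum of products in which exactly one factor is a difference of components of $\tz_1 - \tz_2$ or of their transformed counterparts. The factors that do \emph{not} carry the difference are then estimated exactly as in \autoref{lem: estimate non lin}, yielding the shrinking prefactor $C(\tau)$, while the difference factors are controlled by the appropriate component of $\| \tz_1 - \tz_2 \|_{\E_{1,\tau}}$. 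Because the two solutions share the same initial data $(\rho_{\rd,0}, u_0, T_0, q_{j,0})$, all purely initial-datum pieces (e.g., $\rho_{\rd,0} Q_{\rm,0}$, $Q_{\rm,0}$, the reference solutions $\tq_j^*$) drop out of the differences, which removes the need to revisit the reference-solution argument used in \eqref{eq:diff init value and ref sol}--\eqref{eq:diff ref sol and sol}.

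The key auxiliary step is to control the dependence of the Lagrangian transform on the fluid velocity. From \eqref{eq:shape of X}, for the two diffeomorphisms $X_1, X_2$ associated with $\tu_1, \tu_2$, we have
\begin{equation*}
    \nabla X_1(t,y) - \nabla X_2(t,y) = \int_0^t \nabla(\tu_1 - \tu_2)(s,y) \srd s,
\end{equation*}
and the identity $Z_1 - Z_2 = - Z_1 (\nabla X_1 - \nabla X_2) Z_2$ combined with \autoref{lem:ests of trafo} gives $\| Z_1 - Z_2 \|_{\rL_\tau^\infty(\rH_x^{1,q})} \le C \tau^{\nicefrac{1}{p'}} \| \tu_1 - \tu_2 \|_{\E_{1,u,\tau}}$, together with analogous bounds for $\| \partial Z_1 / \partial y - \partial Z_2 / \partial y \|_{\rL_\tau^\infty(\rL_x^q)}$ and for the differences of $\tQ_{\rm,k}$, $\tp_k$, etc. These estimates provide the $\tau$-smallness needed on every factor involving the transformed geometry.

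For the non-differentiable switch terms $(\tq_{\rv,k} - \tq_\rvs)^+$, $(\tq_{\rc,k} - 1)^+$ appearing in $G_T$, $G_\rv$, $G_\rc$, $G_\rr$, the proof uses the pointwise Lipschitz estimate $| f^+ - g^+ | \le | f - g |$. Combining this with the uniform-in-$\tz_k$ bounds from \autoref{lem:ests of vars} on the smooth factors multiplying the switch terms (such as $1 + \tT_k$, $\tq_{\rr,k}$, and $\nicefrac{(1+\tq_{\rv,k})}{\tQ_{\rm,k}}$) reproduces, for the differences, the same $\tau^\delta$-type prefactors as in \autoref{lem: estimate non lin}. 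The boundary terms $B_{T,i}, B_{q_j,i}$ are treated in the same way: the difference is split into a part in which the discrepancy is in $\mathrm{Id}_3 - Z$ (controlled by $\| Z_1 - Z_2 \|$ together with the trace embedding or \autoref{lem:est of bilin term on boundary}) and a part in which the discrepancy is in $\nabla \tT$ or $\nabla \tq_j$ (controlled through \autoref{lem:est of normal derivative}), both of which yield $C(\tau) \cdot \| \tz_1 - \tz_2 \|_{\E_{1,\tau}}$.

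The main obstacle is the combined nonlinearity and nonlocality coming from the Lagrangian transform, in particular the fact that differences $Z_1 - Z_2$ must ultimately be expressed back in terms of $\tu_1 - \tu_2$ with quantitative smallness in $\tau$; once the telescoping identity for $Z_1 - Z_2$ above is installed, every other factor has already been estimated uniformly in \autoref{lem: estimate non lin}, and the proof reduces to a bookkeeping exercise that replaces a single uniformly-bounded factor in each product of \autoref{lem: estimate non lin} by its difference, picking up an additional $\| \tz_1 - \tz_2 \|_{\E_{1,\tau}}$ while keeping the shrinking $C(\tau)$.
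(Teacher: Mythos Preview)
Your proposal is correct and follows exactly the approach the paper indicates: the paper itself does not give a detailed proof but merely states that the contraction estimate is obtained ``in the same way as the preceding lemma,'' i.e., by repeating the arguments of \autoref{lem: estimate non lin} for differences. Your telescoping strategy, the Lipschitz bound $|f^+ - g^+| \le |f - g|$ for the switch terms, and the identity $Z_1 - Z_2 = -Z_1(\nabla X_1 - \nabla X_2)Z_2$ for the transform differences are precisely the ingredients that make this work, and you have in fact spelled out more detail than the paper does.
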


\subsection{Proof of \autoref{thm:localWP}}
\

In this subsection, we complete the proof of the local strong well-posedness.

\begin{proof}{(Proof of \autoref{thm:localWP}).}
For $\tz_1 = (\trho_{\rd,1},\tu_1,\tT_1,\tq_{\rv,1},\tq_{\rc,1},\tq_{\rr,1})  \in \E_{1,\tau}$, by $\tz \coloneqq \Psi(\tz_1) \in \E_{1,\tau}$, with~$\tz = (\trho_\rd, \tu,\tT,\tq_\rv, \tq_\rc,\tq_\rr)$, we denote the solution to the linearized system \eqref{eq:complete lin syst in Lagrangian coords} with right-hand sides~$G$ and boundary conditions $B_i$ given in terms of~$\tz_1$. 
The existence of a solution is guaranteed by \autoref{prop:opt data result moisture compre NSE} and \autoref{lem: estimate non lin}, and it follows that $\Psi$ is a self-map. 
In order to show that $\Psi$ is also a contraction, consider $\tz_1$, $\tz_2 \in \E_{1,\tau}$. 
Then \autoref{prop:opt data result moisture compre NSE} and \autoref{lem: contraction} yield the estimate
\begin{equation*}
    \| \Psi(\tz_1) - \Psi(\tz_2) \|_{\E_{1,\tau}} =  \| \tz_1 - \tz_2 \|_{\E_{1,\tau}} \le C \cdot \| G(\tz_1) - G(\tz_2) \|_{\E_{0,\tau}} \le C(\tau) \cdot \| \tz_1 - \tz_2 \|_{\E_{1,\tau}},
\end{equation*}
where the constant $C>0$ is independent of time, since $\tz_1-\tz_2$ has zero initial value. 
The contraction property then follows by choosing $\tau$ appropriately small such that $C(\tau) \in (0,1)$.
As the time $\tau > 0$ is chosen sufficiently small, it follows from \autoref{lem:ests of trafo} that $X(t,\cdot)$ is a $\rC^1$-diffeomorphism from $\Omega$ to $\Omega$, so we obtain the solution $(\rho_\rd,u,T,q_\rv,q_\rc,q_\rr) \in \E_{1,\tau}$ in the Eulerian formulation upon invoking the inverse transformation $Y$.
\end{proof}

\section{Global Well-Posedness for Small Data}\label{sec: global}

In order to prove the global-in-time existence for small data and in the absence of gravity, we assume that $q_\rvs \equiv 0$ for simplicity of notation. 
In fact, any other constant $q_\rvs^*$ could be considered by choosing the steady state solution $q_\rv =q_\rvs^*$, see also \autoref{rem_ assu}.

We transform the system \eqref{eq:coupled moisture compr NSE detailed} by using the Lagrange transform introduced in~\eqref{eq:CP Lagrange}. 
However, we define the new quantities taking into account the constant steady state solution.
More precisely, we define
\begin{equation*}
    \begin{aligned}
        \trho_\rd(t,y)
        &= \rho_\rd(t,X(t,y))- \bar{\rho}_\rd, \enspace \tu(t,y) = u(t,X(t,y)), \enspace \tT(t,y) =T(t,X(t,y)) - \bar{T} ,\\
        \tq_\rv(t,y) &= q_\rv(t,X(t,y)), \enspace \tq_\rc(t,y) = q_\rc(t,X(t,y)) -1 \tand \tq_\rr(t,y) = q_\rr(t,X(t,y)),
    \end{aligned}
\end{equation*}
where $\bar{\rho}_\rd>0$ and $\bar{T} \geq0$. 
The reason for this modification is our intention to linearize the system \eqref{eq:coupled moisture compr NSE detailed} around the constant steady state given by $(\bar{\rho}_\rd, 0,\bar{T},0,1,0)$. 
Note that $\tQ_m(t,y) =Q_m(t,X(t,y)) -1$. 
Therefore, the resulting transformed system is given by
\begin{equation}\label{eq:Lagrangian coords global}
	\left\{	
    \begin{aligned}
        \del_t\trho_\rd+ \bar{\rho}_\rd \mdiv\tu
        &=G_\rd, &&\tin (0,\tau) \times \Omega,\\
		\del_t\tu-\frac{1}{ 2\bar{\rho}_\rd}\rL\tu + \frac{\bar{T}}{2 \bar{\rho}_\rd} \nabla \trho_\rd + \frac{\bar{T}}{2}\nabla \tq_\rv + \frac{1}{2}\nabla \tT 
        &=G_u, &&\tin (0,\tau) \times \Omega,\\
        \del_t\tT-\frac{1}{2}\Delta\tT + \frac{1}{2}\bar{T} \mdiv \tu - \frac{ (\bar{T}+1)}{2}\tq_\rv
        &=G_T, &&\tin (0,\tau) \times \Omega,\\
        \del_t\tq_\rv-(\Delta -1)\tq_\rv
        &=G_\rv, &&\tin (0,\tau) \times \Omega,\\
        \del_t\tq_\rc-\Delta\tq_\rc - \tq_\rv+\tq_\rr
        &=G_\rc, &&\tin (0,\tau) \times \Omega,\\
        \del_t\tq_\rr-(\Delta+1) \tq_\rr 
        &=G_\rr, &&\tin (0,\tau) \times \Omega.
		\end{aligned}\right.
\end{equation} 
Recalling the difference of the transformed Laplacian and the original Laplacian~$\cL_1 - \Delta$ as well as the difference of the transformed term associated with $\nabla \mdiv$ from \eqref{eq:transformed Laplacian minus Laplacian} and \eqref{eq:transformed second comp Lame}, respectively, we find that the transformed terms $G_\rd$, $G_u$, $G_T$, $G_\rv$, $G_\rc$ and $G_\rr$ on the right-hand side of \eqref{eq:Lagrangian coords global} are given by
\begin{equation*}
    G_\rd = -\trho_\rd\mdiv\tu-(\trho_\rd + \bar{\rho}_\rd )\nabla\tu:[Z^\top-\mathrm{Id}_3],
\end{equation*}
\begin{equation*}
    \begin{aligned}
        (G_{u})_{i}
        &= - \frac{\trho_\rd}{\bar{\rho}_\rd}\del_t\tu_{i} - \frac{(\tq_\rv + \tq_\rc + \tq_\rr) \trho_\rd}{2 \bar{\rho}_\rd} \dt \tu_i - \frac{\tq_\rv + \tq_\rc + \tq_\rr}{2} \dt \tu_i + \frac{\mu}{ 2\bar{\rho}_\rd} (\cL_1 - \Delta) \tu_i\\
        &\quad + \frac{\mu+\lambda}{ 2\bar{\rho}_\rd} \bigl(\cL_2 - \nabla \mdiv) \tu\bigr)_i\\
        &\quad-\frac{1}{2 \bar{\rho}_\rd}\Bigl((\tT + \bar{T} )(Z^\top (\nabla \trho_\rd) \tq_\rv )_i+ \tT (Z^\top \nabla \trho_\rd)_i + \bar{T} \bigl ((Z^\top - \mathrm{Id}_3 ) \nabla \trho_\rd \bigr )_i\Bigr)\\
        &\quad -\frac{1}{2 \bar{\rho}_\rd}\Bigl( (\tT + \bar{T}) (\trho_\rd Z^\top \nabla \tq_\rv )_i+ \bar{\rho}_\rd \tT (Z^\top \nabla \tq_\rv)_i +\bar{T}\bar{\rho}_\rd \bigl ( (Z^\top - \mathrm{Id}_3) \nabla \tq_\rv \bigr )_i  \Bigr) \\
        &\quad -\frac{1}{2 \bar{\rho}_\rd}\Bigl( (\trho_\rd + \bar{\rho}_\rd ) (\tq_\rv Z^\top \nabla \tT )_i + \trho_\rd (Z^\top \nabla \tT)_i + \bar{\rho}_\rd \bigl ( (Z^\top - \mathrm{Id}_3 )\nabla \tT \bigr )_i  \Bigr)\\
        &\quad - \frac{\trho_\rd + \bar{\rho}_\rd}{2\bar{\rho}_\rd}\tq_\rr\tV_\rr\sum_{l=1}^3\frac{\del \tu_i}{\del y_l}Z_{l,3} ,
    \end{aligned}
\end{equation*}
\begin{equation*}
    \begin{aligned}
        G_T
        &= -\Bigl( \frac{\tq_\rv + \tq_\rc + \tq_\rr}{2}\Bigr)\del_t\tT + \frac{1}{2}\biggl((\cL_1 - \Delta) \tT + \tq_\rr\tV_\rr\sum_{l=1}^3 \frac{\del\tT}{\del y_{l}} Z_{l,3}\\
        &\qquad - \bigl (\tT + \tq_\rv ( \tT + \bar{T}) \bigr) (\nabla \tu:Z^\top)-\bar{T}\nabla \tu:(Z^\top-I)\\
        &\qquad -( \tT + \bar{T} +1 ) (\tT +\bar{T}) \frac{1+\tq_\rv}{2 + \tq_\rv + \tq_\rc + \tq_\rr} (-\tq_\rv)^+ \tq_\rr \\
        & \qquad + (\bar{T}+1) ( \tq_\rv \tq_\rc + (\tq_\rv)^+ ) + \tT(\tq_\rv \tq_\rc + (\tq_\rv)^+ ) \biggr ),
    \end{aligned}
\end{equation*}
\begin{equation*}
    \begin{aligned}
        G_\rv
        &= (\cL_1 - \Delta) \tq_\rv +(\tT +\bar{T}) \frac{1+\tq_\rv}{2 + \tq_\rv + \tq_\rc + \tq_\rr} ( -\tq_\rv)^+ \tq_\rr-\tq_\rv \tq_\rc-(\tq_\rv)^{+},\\
        G_\rc
        &= (\cL_1 - \Delta) \tq_\rc+\tq_\rv\tq_\rc +(\tq_\rv)^{+}-(\tq_\rc)^{+}-\tq_\rc\tq_\rr, \tand\\
        G_\rr
        &= (\cL_1 - \Delta) \tq_\rr +(\tq_\rc)^{+}+\tq_\rc\tq_\rr- (\tT +\bar{T}) \frac{1+\tq_\rv}{2 + \tq_\rv + \tq_\rc + \tq_\rr} ( -\tq_\rv)^+ \tq_\rr\\
        &\quad +\tV_\rr\sum_{l=1}^3 \frac{\del \tq_\rr}{\del y_{l}}Z_{l,3}
        +\tq_\rr\sum_{l=1}^3 \frac{\del \tV_\rr}{\del y_{l}} Z_{l,3}
        +\frac{1}{\trho_\rd + \bar{\rho}_\rd}\tq_\rr \tV_\rr\sum_{l=1}^3 \frac{\del \trho_\rd }{\del y_{l}} Z_{l,3}.
    \end{aligned}
\end{equation*}
The homogeneous Dirichlet boundary conditions for the fluid do not change by the transformation. 
However, the transformed boundary conditions for the temperature and the densities are given by
\begin{equation*}
    \begin{aligned}
        \del_{y_3} \tT
        &= (1-Z_{3,3}) \del_{y_3} \tT - Z_{1,3} \del_{y_1} \tT - Z_{2,3} \del_{y_2} \tT \eqqcolon B_{T,1}, &&\ton \Gau,\\
        \del_{y_3} \tT
        &= (1-Z_{3,3}) \del_{y_3} \tT - Z_{1,3} \del_{y_1} \tT - Z_{2,3} \del_{y_2} \tT \eqqcolon B_{T,2}, &&\ton \Gal,\\
        \del_{y_3} q_j
        &= (1-Z_{3,3}) \del_{y_3} q_j - Z_{1,3} \del_{y_1} q_j - Z_{2,3} \del_{y_2} q_j \eqqcolon B_{q_j ,1}, &&\ton \Gau,\\
        \del_{y_3} q_j
        &=  (1-Z_{3,3}) \del_{y_3} q_j - Z_{1,3} \del_{y_1} q_j - Z_{2,3} \del_{y_2} q_j \eqqcolon B_{q_j ,2}, &&\ton \Gal, 
    \end{aligned}
\end{equation*}
where $j \in \{\rv,\rc,\rr\}$. Finally the initial data is given by
\begin{equation}\label{eq: initial data global}
    \begin{aligned}
        \trho_\rd(0)&= \rho_\rd(0) -\bar{\rho}_\rd, \enspace \tu(0) = u(0), \enspace \tT(0) = T(0) -\bar{T}, \\ 
        \tq_\rv(0) &= q_\rv(0), \enspace \tq_\rc(0) = q_\rc(0)-1, \enspace \tq_\rr (0) = q_\rr(0).
    \end{aligned}
\end{equation}

\subsection{Linearized compressible Navier-Stokes-moisture system}\label{subsec:lin NS M}
\ 

In this subsection, we discuss some properties of the operator associated with the linear system \eqref{eq:Lagrangian coords global}. 
Set $\rX_0 \coloneqq \rH^{1,q}_\per(\Omega) \times \rLq(\Omega)^3 \times \rLq(\Omega)^4$, and for the principal variable $\tz = (\trho_\rd, \tu, \tT,\tq_\rv,\tq_\rc, \tq_\rr)$, define the differential operator $\cA$ by 
\begin{equation*}
    \begin{aligned}
        &\Bigl( -\bar{\rho}_\rd \mdiv \tu ,- \frac{\bar{T}\nabla  \trho_\rd }{2 \bar{\rho}_\rd} + \frac{\rL \tu}{2\bar{\rho}_\rd } - \frac{\nabla \tT}{2} - \frac{\bar{T}\nabla \tq_\rv}{2},- \frac{\bar{T}\mdiv \tu}{2} + \frac{\Delta \tT }{2}  +\frac{\bar{T}+1}{2}\tq_\rv,\\
        &\quad (\Delta -1)\tq_\rv,\tq_\rv + \Delta \tq_\rc + \tq_\rr, (\Delta +1) \tq_\rr \Bigr)^\top,
    \end{aligned}
\end{equation*}
subject to the boundary conditions 
\begin{equation}\label{eq: bs operator}
    \tu = 0 \ton \Gau \cup \Gal \tand \dz \tT= \dz \tq_j =0 \ton \Gau \cup \Gal, \tfor j \in \{ \rv,\rc,\rr \}.
\end{equation}
For $q>1$, consider the $\rLq$-realization of $\cA$ in $\rX_0$, subject to \eqref{eq: bs operator} defined by 
\begin{equation*}
\begin{aligned}
    A \tz &\coloneqq \cA \tz = \begin{pmatrix}
        0 & -\bar{\rho}_\rd \mdiv & 0 & 0 & 0 & 0 \\
       - \frac{\bar{T}}{2 \bar{\rho}_\rd} \nabla & \frac{1}{2 \bar{\rho}_\rd}\rL & -\frac{1}{2} \nabla & -\frac{\bar{T}}{2} \nabla & 0 & 0 \\ 0 & -\frac{\bar{T}}{2} \mdiv & \frac{1}{2}\Delta & \frac{ \bar{T}+1 }{2}& 0 & 0 \\ 0 & 0 & 0 & \Delta -1 & 0 & 0 \\ 0 & 0 & 0 & 1  & \Delta & 1 \\ 
       0 & 0 & 0 & 0 & 0& \Delta+1  
    \end{pmatrix} \begin{pmatrix}
        \trho_\rd \\ \tu \\ \tT \\ \tq_\rv \\ \tq_\rc \\ \tq_\rr 
    \end{pmatrix}, \twith
\end{aligned}
\end{equation*}
\begin{equation*}
    \begin{aligned}
        \rD(A) \coloneqq \bigl\{&\tz \in \rH^{1,q}_\per(\Omega) \times \rH^{2,q}_\per(\Omega)^3 \times \bigl ( \rH_\per^{2,q}(\Omega) \bigr )^4 :\\
        &\tu = 0 \tand \dz \tT = \dz \tq_j =0 \ton \Gau \cup \Gal \tfor j \in \{\rv,\rc,\rr\} \bigr \}.
    \end{aligned}
\end{equation*}
For the investigation of the linear system \eqref{eq:Lagrangian coords global}, we first show that the operator~$-A$, up to a shift, is $\mathcal{R}-$sectorial of angle strictly less than $\pi/2$, which in particular implies maximal $\rLp$-regularity on $\rX_0$ thanks to the UMD property of the latter space due to $q \in (1,\infty)$.

\begin{prop}
    \label{prop: max reg A}
    Let $q \in (1,\infty)$. Then there exists $\omega>0$ such that the operator~$-A+\omega$ is $\mathcal{R}-$sectorial on $\rX_0$ of angle $\Phi_{-A+\omega} < \nicefrac{\pi}{2}$.
\end{prop}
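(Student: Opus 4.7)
The plan is to decompose $A$ into a block-triangular operator whose two diagonal blocks---a compressible Navier--Stokes--temperature block and a moisture block---are treated separately, and then to absorb the remaining inter-block couplings as lower-order perturbations after a sufficiently large shift $\omega > 0$.

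First, I would isolate the moisture subsystem. Upon reordering the moisture unknowns as $(\tq_\rv, \tq_\rr, \tq_\rc)$, the corresponding block is lower-triangular with diagonal entries $\Delta - 1$, $\Delta + 1$, $\Delta$ acting on $\rLq(\Omega)$ with homogeneous Neumann boundary conditions on $\Gau \cup \Gal$ and lateral periodicity. The negative Neumann Laplacian is known to be $\mathcal{R}$-sectorial of angle strictly less than $\nicefrac{\pi}{2}$ for every $q \in (1,\infty)$ (see \cite{DHP:03}), so for $\omega > 1$ each shifted diagonal entry inherits this property. The triangular couplings into the $\tq_\rc$ equation are zero-order maps $\rLq \to \rLq$, hence relatively $\mathcal{R}$-bounded with arbitrarily small bound, and are absorbed via the perturbation result in \cite[Prop.~4.2]{DHP:03}.

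Second, I would treat the compressible Navier--Stokes--temperature block
\begin{equation*}
    A_{\mathrm{NT}} \coloneqq \begin{pmatrix} 0 & -\bar{\rho}_\rd \mdiv & 0 \\ -\tfrac{\bar{T}}{2\bar{\rho}_\rd}\nabla & \tfrac{1}{2\bar{\rho}_\rd}\rL & -\tfrac{1}{2}\nabla \\ 0 & -\tfrac{\bar{T}}{2}\mdiv & \tfrac{1}{2}\Delta \end{pmatrix}
\end{equation*}
on $\rH^{1,q}_\per(\Omega) \times \rLq(\Omega)^3 \times \rLq(\Omega)$, subject to $\tu = 0$ and $\dz \tT = 0$ on $\Gau \cup \Gal$ together with lateral periodicity. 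This is precisely the linearization of the heat-conducting compressible Navier--Stokes equations around the constant equilibrium $(\bar{\rho}_\rd, 0, \bar{T})$. Its $\mathcal{R}$-sectoriality of angle strictly less than $\nicefrac{\pi}{2}$ after a shift follows the procedure developed in \cite{DHP:07}: one first derives operator-valued Fourier multiplier bounds for the resolvent symbol of $A_{\mathrm{NT}}$ on $\R^3$, then localizes to the half-space to accommodate the Dirichlet and Neumann traces, transfers to the layer $\R^2 \times (0,1)$ by a partition-of-unity argument, and finally passes to $\Omega$ by the periodic extension procedure invoked right before \autoref{lem:lin opt data}.

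Third, the only inter-block couplings are the first-order entry $-\tfrac{\bar{T}}{2}\nabla \tq_\rv$ feeding into the momentum equation and the zero-order entry $\tfrac{\bar{T}+1}{2}\tq_\rv$ feeding into the temperature equation. Both are of order strictly less than two on $\tq_\rv$ and map into $\rLq$, hence relatively $\mathcal{R}$-bounded with $\mathcal{R}$-bound zero with respect to the moisture block. Enlarging $\omega$ if needed, another application of \cite[Prop.~4.2]{DHP:03} yields the $\mathcal{R}$-sectoriality of $-A + \omega$ on $\rX_0$ with angle preserved strictly below $\nicefrac{\pi}{2}$.

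The main obstacle is the second step. The partially hyperbolic character of the continuity equation renders the resolvent symbol of $A_{\mathrm{NT}}$ of mixed parabolic--hyperbolic type, with acoustic modes contributing only first-order smoothing in the density component. Establishing operator-valued Mikhlin-type bounds for such a symbol on the mixed base space $\rH^{1,q} \times \rLq \times \rLq$ requires a delicate decomposition into acoustic and parabolic parts and is the technical heart of the argument; in particular, the mismatch in the orders of smoothing is precisely the reason why the density component is placed in $\rH^{1,q}$ rather than in $\rLq$.
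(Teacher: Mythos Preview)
Your approach is correct in principle but takes a substantially harder route than the paper. The paper does \emph{not} treat the compressible Navier--Stokes--temperature block as a unit at all. Instead it splits $A = A_0 + Q$ where $A_0$ is block upper-triangular with only $\tfrac{1}{2\bar{\rho}_\rd}\rL$, $\tfrac{1}{2}\Delta$, and three copies of $\Delta$ on the diagonal (plus the bounded map $-\bar{\rho}_\rd\mdiv \colon \rH^{2,q}\to\rH^{1,q}$ in the first row), and $Q$ collects \emph{all} remaining couplings, including the pressure gradients $-\tfrac{\bar{T}}{2\bar{\rho}_\rd}\nabla\trho_\rd$, $-\tfrac{1}{2}\nabla\tT$, $-\tfrac{\bar{T}}{2}\nabla\tq_\rv$ and the term $-\tfrac{\bar{T}}{2}\mdiv\tu$. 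The crucial observation you miss is that since the density component of $\rX_0$ is $\rH^{1,q}$ rather than $\rLq$, the map $\trho_\rd \mapsto \nabla\trho_\rd$ is already bounded into $\rLq$, so the acoustic coupling costs nothing; the remaining entries of $Q$ are first- or zero-order in variables whose domain component is $\rH^{2,q}$, hence relatively $A_0$-bounded with bound zero by interpolation and Young's inequality. A single application of \cite[Prop.~4.3]{DHP:03} then finishes the proof.

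What this buys: the paper entirely sidesteps the mixed parabolic--hyperbolic symbol analysis that you identify as ``the technical heart of the argument.'' Your decomposition forces you to establish $\mathcal{R}$-sectoriality of the full linearized compressible Navier--Stokes--Fourier operator on $\rH^{1,q}\times\rLq\times\rLq$, which, while available in the literature, is a nontrivial standalone result and is not what \cite{DHP:07} directly provides (that reference treats purely parabolic systems). Your route would work, but it imports heavy machinery where a two-line perturbation argument suffices.
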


\begin{proof}
    We split the operator $A$ into $A = A_0 +Q$, where
    \begin{equation*}A_0=
    \begin{pmatrix}
        0 & -\bar{\rho}_\rd \mdiv & 0 & 0 & 0 & 0 \\
       0 & \frac{1}{2 \bar{\rho}_\rd}\rL & 0 & 0 & 0 & 0 \\ 0 & 0 & \frac{1}{2}\Delta & 0 & 0 & 0 \\ 0 & 0 & 0 & \Delta  & 0 & 0 \\ 0 & 0 & 0 & 0  & \Delta & 0 \\ 
       0 & 0 & 0 & 0 & 0& \Delta  
    \end{pmatrix} \ \text{and } \ Q =  \begin{pmatrix}
        0 & 0 & 0 & 0 & 0 & 0 \\
       - \frac{\bar{T}}{2 \bar{\rho}_\rd} \nabla & 0 & -\frac{1}{2} \nabla & -\frac{\bar{T}}{2} \nabla & 0 & 0 \\ 0 & -\frac{\bar{T}}{2} \mdiv & 0 &  \frac{ \bar{T}+1 }{2} & 0 & 0 \\ 0 & 0 & 0 & -1 & 0 & 0 \\ 0 & 0 & 0 & 1  & 0 & 1 \\ 
       0 & 0 & 0 & 0 & 0& 1
    \end{pmatrix} .
    \end{equation*}
    Since $\bar{\rho}_\rd \mdiv$ is a bounded operator from $\rH_\per^{2,q}(\Omega)^3$ to $\rH_\per^{1,q}(\Omega)$, and the Lam\'e operator as well as the Laplacian are $\cR$-sectorial up to a shift, we see that there exists $\omega>0$ such that $-A_0+\omega$ is $\cR$-sectorial on $\rX_0$ of angle strictly less than $\pi/2$. 
    Next, we note that for all $\tz \in \rD(A)$ it holds that
    \begin{equation*}
        \| Q \tz \|_{\rX_0} \leq C \bigl ( \| \trho_\rd \|_{\rH^{1,q}(\Omega)} +  \| \tT \|_{\rH^{1,q}(\Omega)} +  \| \tq_\rv \|_{\rH^{1,q}(\Omega)}+\| \tq_\rr \|_{\rH^{1,q}(\Omega)} + \| \tu \|_{\rH^{1,q}(\Omega)}  \bigr).
    \end{equation*}
    Complex interpolation and Young's inequality then imply that for any $\eps>0$, we get
    \begin{equation*}
        \begin{aligned}
            \| Q \tz \|_{\rX_0} &\leq \eps \bigl ( \| \tu \|_{\rH^{2,q}(\Omega)} +\| \tT \|_{\rH^{2,q}(\Omega)} + \| \tq_\rv \|_{\rH^{2,q}(\Omega)} + \|\tq_\rr\|_{\rH^{2,q}(\Omega)}   \bigr )\\ &\quad  + C(\eps) \bigl ( \| \trho_\rd \|_{\rH^{1,q}(\Omega)} +  \| \tT \|_{\rLq(\Omega)} +  \| \tq_\rv \|_{\rLq(\Omega)} + \| \tu \|_{\rLq(\Omega)} +\| \tq_\rr \|_{\rLq(\Omega)}  \bigr) \\
            &\leq \eps \cdot \| A_0 \tz \|_{\rX_0} + C(\eps) \cdot \| \tz \|_{\rX_0}.
        \end{aligned}
    \end{equation*}
    It follows that $A$ can be treated as a relatively $A_0$-bounded perturbation with relative bound zero.
    Hence, standard perturbation theory, see for example \cite[Prop.~4.3]{DHP:03}, yields the existence of $\omega>0$ such that $-A+\omega$ is $\cR$-sectorial on $\rX_0$ of angle $\Phi_A < \pi/2$.
\end{proof}

\subsection{Nonlinear estimates}
\

For some fixed $\tau > 0$, and with the maximal regularity space $\E_{1,\tau}$ as made precise in \eqref{eq:full sol space}, and for~$\eps > 0$, we introduce the ball
\begin{equation}\label{eq:sol ball} 
    \mathbb{B}_{\eps, \tau}\coloneqq\left\{\tz = (\trho_{\rd},\tu, \tT,\tq_\rv,\tq_\rc,\tq_\rr) \in \E_{1,\tau} : \| \tz \|_{\E_{1,\tau}}\leq \eps\right\}.
\end{equation}

As a preparation for the nonlinear estimates, we collect useful estimates of the terms related to the transform.
Again, we note that the proof is standard, and we refer for example to \cite[Sec.~3.9]{HMTT:19} for more details.

\begin{lem}\label{lem:ests of trafo global wp}
Let $p \in (2,\infty)$ and $q \in (3,\infty)$, consider $\tz \in \mathbb{B}_{\eps, \tau}$ for some fixed~$\tau > 0$, where $\mathbb{B}_{\eps, \tau}$ has been made precise in \eqref{eq:sol ball}, and recall $X$ from \eqref{eq:shape of X} as well as $Z = [\nabla X]^{-1}$.  
\begin{enumerate}[(i)]
    \item Then there exists a constant $C = C(p,q,\Omega,\tau) > 0$ such that
    \begin{equation*}
        \sup_{t \in (0,\tau
    )} \| \nabla X(t,\cdot) - \mathrm{Id}_3 \|_{\rL_x^\infty} + \sup_{t \in (0,\tau)} \| \nabla X(t,\cdot) - \mathrm{Id}_3 \|_{\rH_x^{1,q}} \le C \eps.
    \end{equation*}
    If $\eps\in (0,\frac{1}{2C})$, we especially have $\| \nabla X(t,\cdot) - \mathrm{Id}_3 \|_{\rL_\tau^\infty(\rL_x^\infty)} \le \frac{1}{2}$, so $\nabla X(t,\cdot)$ is invertible, and $Z = [\nabla X]^{-1}$ is thus well-defined.
    \item Similar estimates as in (i) are valid for $Z$ and $Z^\top$, i.e., for some constant $C = C(p,q,\Omega,\tau) > 0$, we have
    \begin{equation*}
        \begin{aligned}
            \sup_{t \in (0,\tau)} \| Z(t,\cdot) - \mathrm{Id}_3 \|_{\rH_x^{1,q}} + \sup_{t \in (0,\tau)} \| Z^\top(t,\cdot) - \mathrm{Id}_3 \|_{\rH_x^{1,q}} 
            &\le C \eps \tand\\
            \| Z \|_{\rL_\tau^\infty(\rH_x^{1,q})} + \| Z^\top \|_{\rL_\tau^\infty(\rH_x^{1,q})} 
            &\le C.
        \end{aligned}
    \end{equation*}
    \item For all $j$, $k$, $l = 1,2,3$, there is $C = C(p,q,\Omega,\tau) > 0$ with $\bigl\| \frac{\del Z_{l,j}}{\del y_k} \bigr\|_{\rL_\tau^\infty(\rL_x^q)} \le C \eps$.
    \item There exists a constant $C = C(p,q,\Omega,\tau) > 0$ such that the estimate $\| Z - \Id_3 \|_{\rC_\tau^{0,\nicefrac{1}{p'}}(\rH_x^{1,q})} \le C \eps$ is valid, where $p' \in (1,2)$ is the H\"older conjugate of $p$, i.e., $\nicefrac{1}{p}+\nicefrac{1}{p'}=1$.
\end{enumerate}
\end{lem}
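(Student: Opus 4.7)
The plan is to derive all four parts from the representation $X(t,y)=y+\int_0^t\tilde u(s,y)\,\mathrm ds$ combined with Sobolev embedding (valid since $q>3$, so $H^{1,q}(\Omega)\hookrightarrow L^\infty(\Omega)$ and $H^{1,q}$ is a Banach algebra), together with a Neumann series expansion for $Z=[\nabla X]^{-1}$.

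For part (i), I would differentiate the above integral representation in $y$ to obtain $\nabla X(t,\cdot)-\mathrm{Id}_3=\int_0^t\nabla\tilde u(s,\cdot)\,\mathrm ds$, take the $H^{1,q}_x$-norm inside the integral, apply H\"older in time, and then use the Sobolev embedding to pass to the $L^\infty_x$ norm. Since $\tilde z\in\mathbb B_{\varepsilon,\tau}$ bounds $\|\tilde u\|_{L^p_\tau(H^{2,q}_x)}\le\varepsilon$, this yields an estimate of order $\tau^{1/p'}\varepsilon$, which I absorb into the constant $C=C(p,q,\Omega,\tau)$. Choosing $\varepsilon$ small then forces $\|\nabla X-\mathrm{Id}_3\|_{L^\infty_\tau(L^\infty_x)}\le\tfrac12$, so a Neumann series in the Banach algebra $H^{1,q}(\Omega)$ converges and produces $Z$.

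For parts (ii) and (iii), I would write $Z-\mathrm{Id}_3=(\mathrm{Id}_3-\nabla X)Z$ and iterate the Neumann series to express $Z=\sum_{k\ge 0}(\mathrm{Id}_3-\nabla X)^k$. The Banach algebra property of $H^{1,q}(\Omega)$ together with (i) gives $\|Z\|_{L^\infty_\tau(H^{1,q}_x)}\le C$ and $\|Z-\mathrm{Id}_3\|_{L^\infty_\tau(H^{1,q}_x)}\le C\varepsilon$, and the same for $Z^\top$. For (iii), differentiating the identity $Z\,\nabla X=\mathrm{Id}_3$ yields $\partial_{y_k}Z=-Z(\partial_{y_k}\nabla X)Z$, so I can bound $\|\partial_{y_k}Z_{l,j}\|_{L^\infty_\tau(L^q_x)}$ by the product of two bounded factors $\|Z\|_{L^\infty_\tau(L^\infty_x)}$ and $\|\partial_{y_k}\nabla X\|_{L^\infty_\tau(L^q_x)}\le\int_0^\tau\|\nabla^2\tilde u(s)\|_{L^q_x}\,\mathrm ds\le\tau^{1/p'}\varepsilon$ by H\"older.

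For part (iv), I would use the telescoping identity $Z(t)-Z(s)=Z(t)\bigl(\nabla X(s)-\nabla X(t)\bigr)Z(s)$, take $H^{1,q}_x$ norms, and apply the Banach algebra property once more. The key observation is that $\nabla X(t)-\nabla X(s)=\int_s^t\nabla\tilde u(r)\,\mathrm dr$, whose $H^{1,q}_x$ norm is controlled by $|t-s|^{1/p'}\|\tilde u\|_{L^p_\tau(H^{2,q}_x)}\le|t-s|^{1/p'}\varepsilon$ via H\"older. Combined with the uniform bounds on $Z$ from (ii), this yields the desired H\"older seminorm estimate $[Z-\mathrm{Id}_3]_{C^{0,1/p'}_\tau(H^{1,q}_x)}\le C\varepsilon$; the $L^\infty$ part of the $C^{0,1/p'}$ norm is already contained in (ii). The main technical care, rather than the main obstacle, is ensuring throughout that one uses Sobolev embedding only in the spatial variable and keeps the temporal H\"older exponent $1/p'$ exactly, which arises from integrating in time an $L^p_\tau$ bound.
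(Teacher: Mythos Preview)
Your proposal is correct and follows precisely the standard approach the paper has in mind; indeed, the paper does not give its own proof but simply writes that ``the proof is standard'' and refers to \cite[Sec.~3.9]{HMTT:19}, whose argument is exactly the one you outline (integral representation of $\nabla X-\mathrm{Id}_3$, H\"older in time, Neumann series in the Banach algebra $\rH^{1,q}$, and the telescoping identity for the H\"older estimate). One minor remark: part (iii) is in fact an immediate consequence of part (ii), since $\partial_{y_k}\mathrm{Id}_3=0$ and hence $\|\partial_{y_k}Z_{l,j}\|_{\rL^q_x}\le\|Z-\mathrm{Id}_3\|_{\rH^{1,q}_x}$; your direct derivation via $\partial_{y_k}Z=-Z(\partial_{y_k}\nabla X)Z$ is of course also valid.
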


In the following, for fixed $\tau > 0$, we will also fix $\eps_0 = \eps_0(\tau) > 0$ such that $\eps_0 < \nicefrac{1}{2C}$, and the map $Z$ is thus well-defined by \autoref{lem:ests of trafo global wp}.

In the lemma below, we make precise some further relations to establish the nonlinear estimates.
It can be obtained in a similar way as \autoref{lem:ests of vars}, upon invoking suitable embeddings of the maximal regularity space $\E_{1,\tau}$.

\begin{lem}\label{lem:further ests global wp}
Consider $p \in (2,\infty)$, $q \in (3,\infty)$ and $(\trho_{\rd},\tu,\tT,\tq_\rv,\tq_\rc,\tq_\rr) \in \mathbb{B}_{\eps, \tau}$ for some fixed $\tau > 0$ and $\eps \in (0,\eps_0)$, with $\eps_0$ as fixed above.
Then there exists a constant $C = C(p,q,\Omega,\tau) > 0$ such that
\begin{enumerate}[(i)]
    \item the density $\trho_\rd$ satisfies the estimate $\| \trho_\rd \|_{\rL_\tau^\infty(\rH_x^{1,q})} \le C \eps$,
    \item the fluid velocity $\tu$ fulfills $\| \tu \|_{\rL_\tau^\infty(\rH_x^{1,q})} \le C\eps$,
    \item for the temperature $\tT$, it holds that $\| \tT \|_{\rL_\tau^\infty(\rH_x^{1,q})} \le C\eps$, and
    \item the estimates for $\tT$ as stated in~(iii) remain valid for the mixing ratios $\tq_\rv$, $\tq_\rc$ and $\tq_\rr$.
\end{enumerate}
\end{lem}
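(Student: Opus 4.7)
The plan is to derive each bound from standard embeddings of the maximal regularity spaces $\E_{1,\rd,\tau}$, $\E_{1,u,\tau}$, and $\E_{1,h,\tau}$ into appropriate continuous-in-time function spaces, using only that $\tz \in \mathbb{B}_{\eps,\tau}$ controls the full maximal regularity norm of each component by $\eps$. Unlike \autoref{lem:est in Linfty Besov} in the local setting, here the embedding constants are allowed to depend on $\tau$, since the statement permits $C = C(p,q,\Omega,\tau)$.

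For part~(i), I would invoke the elementary Sobolev embedding in time
\begin{equation*}
    \rH^{1,p}(0,\tau;\rH_\per^{1,q}(\Omega)) \hookrightarrow \rC([0,\tau];\rH^{1,q}(\Omega)),
\end{equation*}
which is valid because $p > 1$, together with a $\tau$-dependent embedding constant. Applying this to $\trho_\rd \in \E_{1,\rd,\tau}$ directly yields $\| \trho_\rd \|_{\rL_\tau^\infty(\rH_x^{1,q})} \le C \| \trho_\rd \|_{\E_{1,\rd,\tau}} \le C\eps$.

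For part~(ii), I would first use the (vector-valued) trace/embedding
\begin{equation*}
    \E_{1,u,\tau} \hookrightarrow \rC([0,\tau];\rB^{2(1-\nicefrac{1}{p})}_{q,p}(\Omega)^3),
\end{equation*}
which is the vector analogue of \autoref{lem:est in Linfty Besov} (see the remark following it), and then the Sobolev embedding $\rB^{2(1-\nicefrac{1}{p})}_{q,p}(\Omega) \hookrightarrow \rH^{1,q}(\Omega)$, which holds because $2(1-\nicefrac{1}{p}) > 1$ thanks to $p > 2$. Composition of these two embeddings gives
\begin{equation*}
    \| \tu \|_{\rL_\tau^\infty(\rH_x^{1,q})} \le C \| \tu \|_{\rL_\tau^\infty(\rB^{2(1-\nicefrac{1}{p})}_{q,p})} \le C \| \tu \|_{\E_{1,u,\tau}} \le C\eps.
\end{equation*}
Parts~(iii) and~(iv) then follow in the same way from the scalar version of \autoref{lem:est in Linfty Besov} applied to each of $\tT$, $\tq_\rv$, $\tq_\rc$, $\tq_\rr \in \E_{1,h,\tau}$, combined with $\rB^{2(1-\nicefrac{1}{p})}_{q,p}(\Omega) \hookrightarrow \rH^{1,q}(\Omega)$.

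I do not anticipate any real obstacle: the argument is a routine chain of embeddings, and the only points requiring attention are (a) verifying the parameter condition $p > 2$ which makes $2(1-\nicefrac{1}{p}) > 1$ and thus legitimates the Besov-into-Bessel embedding, and (b) noting that the trace is well-defined on the full interval $[0,\tau]$ with a constant that may depend on $\tau$, which is acceptable here. In contrast to the local existence argument, no $\tau$-smallness from embedding constants is needed, since the smallness is already encoded in the radius $\eps$ of the ball $\mathbb{B}_{\eps,\tau}$.
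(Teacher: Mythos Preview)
Your proposal is correct and follows essentially the same route the paper indicates: the paper does not give a detailed proof but remarks that the lemma ``can be obtained in a similar way as \autoref{lem:ests of vars}, upon invoking suitable embeddings of the maximal regularity space $\E_{1,\tau}$,'' which is precisely what you do. Your observation that the embedding constants may depend on $\tau$ here (so that one can use $\E_{1,h,\tau} \hookrightarrow \rC([0,\tau];\rB^{2(1-\nicefrac{1}{p})}_{q,p}(\Omega))$ directly without splitting off the initial value as in \autoref{lem:est in Linfty Besov}) is exactly the right point to make.
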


Before verifying the following nonlinear estimates, for $j \in \{\rv,\rc,\rr\}$, we provide the following auxiliary problem supplemented with the initial data \eqref{eq: initial data global}. 

\begin{equation}\label{eq:Lagrangian coords global proof}
	\left\{	
    \begin{aligned}
        \del_t\trho_\rd+ \bar{\rho}_\rd \mdiv\tu
        &= f_\rd, &&\tin (0,\tau) \times \Omega,\\
		\del_t\tu -\frac{1}{ 2\bar{\rho}_\rd}\rL\tu + \frac{\bar{T}}{2 \bar{\rho}_\rd} \nabla \trho_\rd + \frac{\bar{T}}{2}\nabla \tq_\rv + \frac{1}{2}\nabla \tT 
        &= f_u, &&\tin (0,\tau) \times \Omega,\\
        \del_t\tT -\frac{1}{2}\Delta\tT + \frac{1}{2}\bar{T} \mdiv \tu - \frac{ \bar{T}+1 }{2}(\tq_\rv + (\tq_\rv)^+) 
        &=  f_T, &&\tin (0,\tau) \times \Omega,\\
        \del_t\tq_\rv -(\Delta -1)\tq_\rv + (\tq_\rv)^+
        &= f_\rv  , &&\tin (0,\tau) \times \Omega,\\
        \del_t\tq_\rc-\Delta\tq_\rc - \tq_\rv - (\tq_\rv)^+ + (\tq_\rc)^++\tq_\rr
        &=f_\rc, &&\tin (0,\tau) \times \Omega,\\
        \del_t\tq_\rr -(\Delta+1) \tq_\rr - (\tq_\rc)^+
        &= f_\rr, &&\tin (0,\tau) \times \Omega, \\
          \tu = 0, \enspace \dz \tT = g_{T,1}, \enspace \dz \tq_j
        &= g_{j,1}, &&\ton (0,\tau) \times \Gau,\\
        \tu = 0, \enspace \dz \tT = g_{T,2}, \enspace \dz \tq_j
        &= g_{j,2}, &&\ton (0,\tau) \times \Gal.
		\end{aligned}\right.
\end{equation}

\begin{prop}\label{prop: nonlin est global}
Let $p \in (2,\infty)$, $q \in (3,\infty)$, and for $\eps \in (0,\eps_0)$, where $\eps_0$ is as made precise after \autoref{lem:ests of trafo global wp}, and for fixed $\tau > 0$, let $\tz \in \mathbb{B}_{\eps,\tau}$ be a solution to \eqref{eq:Lagrangian coords global proof} with data $(f_{\rd},f_u,f_T,f_j)$, $(g_{T,i},g_{j,i})$, $(\rho_{\rd,0},u_{0}, T_{0}, q_{j,0})$, $j\in \{v,c,r\}, i=1,2$, satisfying the assumptions from \autoref{prop:opt data result moisture compre NSE}, with the natural adjustments due to the shifts in the initial data. 

Then there is a constant $C = C(p,q,\tau,\Omega) > 0$ so that for the adjusted terms
$G_\rd(\tz)$, $G_u(\tz)$, $\tilde{G}_T(\tz) = G_T(\tz) - \frac{\bar{T}+1}{2} (\tq_{\rv})^+$, $\tilde{G}_\rv(\tz) = G_\rv(\tz) - (\tq_{\rv})^+$, $\tilde{G}_\rc(\tz) = G_\rc(\tz)+(\tq_{\rv})^+ - (\tq_{\rc})^+$ and $\tilde{G}_\rr(\tz) = G_\rr(\tz)+ (\tq_{\rc})^+$, where we will use $\Tilde{G} = (G_\rd,G_u,\tilde{G}_T,\tilde{G}_\rv,\tilde{G}_\rc,\tilde{G}_\rr)$, and for the boundary terms $B_{T,i}(\tz)$, $B_{j,i}(\tz)$, $j \in \{\rv,\rc,\rr\}$, $i=1,2$, which will be denoted by $B_i = (B_{T,i},B_{\rv,i},B_{\rc,i},B_{\rr,i})$, we have
\begin{equation*}
    \begin{aligned}
        \| \Tilde{G}(\tz) \|_{\E_{1,\tau}} + \sum_{i=1}^2 \| B_i(\tz))\|_{\F_\tau} \le  C\eps^{2}.
    \end{aligned}
\end{equation*}
\end{prop}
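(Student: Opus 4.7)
The aim is to express every term of $\Tilde G$ and every boundary term $B_i$ as a \emph{genuine bilinear form} built from factors that are each $\eps$-small. The available small quantities are: the solution components of $\tz$ themselves, bounded in the $\eps$-small norms of \autoref{lem:further ests global wp}; and the geometry quantities $Z-\mathrm{Id}_3$, $Z^\top-\mathrm{Id}_3$, and $\del Z$, bounded in the $\eps$-small norms of \autoref{lem:ests of trafo global wp}. The Banach algebra property of $\rH^{1,q}(\Omega)$, valid because $q>3$, then turns each such bilinear form into the desired $\eps^2$-bound. The only obstruction is that the switch terms $(\tq_\rv)^+$ and $(\tq_\rc)^+$ contain affine-linear parts which cannot be made quadratic by any bilinear splitting; these pieces have been transferred to the linear side of \eqref{eq:Lagrangian coords global proof}, so that $\Tilde G_T$, $\Tilde G_\rv$, $\Tilde G_\rc$, and $\Tilde G_\rr$ no longer contain them and the bilinear structure is restored.

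\textbf{Bulk estimates.} For $G_\rd=-\trho_\rd\mdiv\tu-(\trho_\rd+\bar\rho_\rd)\nabla\tu{:}[Z^\top-\mathrm{Id}_3]$, the Banach algebra bound gives $\|\trho_\rd\mdiv\tu\|_{\rL^p_\tau(\rH^{1,q}_x)}\le C\|\trho_\rd\|_{\rL^\infty_\tau(\rH^{1,q}_x)}\|\tu\|_{\rL^p_\tau(\rH^{2,q}_x)}\le C\eps^2$, and the second summand is handled the same way after extracting $\|Z^\top-\mathrm{Id}_3\|_{\rL^\infty_\tau(\rH^{1,q}_x)}\le C\eps$. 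For $G_u$, the prefactors multiplying $\dt\tu$ split into pieces such as $(\trho_\rd/\bar\rho_\rd)\dt\tu$ and $\tfrac12(\tq_\rv+\tq_\rc+\tq_\rr)\dt\tu$, each with one $\eps$-small $\rL^\infty_x$-factor paired with $\dt\tu\in\rL^p_\tau(\rL^q_x)$; the curvature corrections $(\cL_1-\Delta)\tu$ and $(\cL_2-\nabla\mdiv)\tu$ inherit a $Z-\mathrm{Id}_3$ or $\del Z$ factor from \eqref{eq:transformed Laplacian minus Laplacian}--\eqref{eq:transformed second comp Lame}; and the pressure-type terms are already displayed in the statement as bilinear combinations of $\eps$-small quantities, e.g.\ $\bar T(Z^\top-\mathrm{Id}_3)\nabla\trho_\rd$ and $\tT\,Z^\top\nabla\trho_\rd$. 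In $\Tilde G_T$, the evaporation contribution is a bounded function times $(-\tq_\rv)^+\tq_\rr$, hence genuinely quadratic; the remaining pieces such as $\tT\tq_\rv\tq_\rc$, $\bar T\nabla\tu{:}(Z^\top-\mathrm{Id}_3)$, and $\tq_\rr\tV_\rr\del\tT\cdot Z$ are bilinear with two $\eps$-small factors. The same scheme applies to $\Tilde G_\rv$, $\Tilde G_\rc$, and $\Tilde G_\rr$.

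\textbf{Boundary terms.} Each boundary contribution has the form $(\delta_{k,3}-Z_{k,3})\del_{y_k}\phi$ for $\phi\in\{\tT,\tq_\rv,\tq_\rc,\tq_\rr\}$. The $\rL^p_\tau(\rB^{1-\nicefrac{1}{q}}_{q,q,\per}(G))$-part of the $\F_\tau$-norm is bounded through the trace embedding and the Banach algebra property by $\|Z-\mathrm{Id}_3\|_{\rL^\infty_\tau(\rH^{1,q}_x)}\|\phi\|_{\rL^p_\tau(\rH^{2,q}_x)}\le C\eps^2$. For the $\rF^{\nicefrac{1}{2}-\nicefrac{1}{2q}}_{p,q}(\rL^q_x(G))$-part, I invoke \autoref{lem:est of bilin term on boundary} with $g=Z_{k,3}-\delta_{k,3}$, noting that $g(0)=0$ because $\nabla X(0,\cdot)=\mathrm{Id}_3$, and with $f=\del_{y_k}\phi|_G$ controlled via \autoref{lem:est of normal derivative}. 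This yields a bound of order $C(\tau)\eps^2$, admissible here because $\tau$ is fixed.

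\textbf{Main obstacle.} In contrast to the local result in \autoref{sec: local}, no shrinking time factor $\tau^\delta$ is available; the required smallness must come entirely from $\eps$. Every term must therefore admit a bilinear splitting whose two factors both land in one of the $\eps$-small norms above. The non-Fr\'echet-differentiable switch terms are the critical point: their linear parts $(\tq_\rv)^+$ and $(\tq_\rc)^+$ must be identified exactly and built into the linear operator $A$ of \autoref{subsec:lin NS M}, so that only truly quadratic debris remains for the nonlinear estimate. If any linear contribution were inadvertently left on the right-hand side, no $\eps$-smallness could produce the required $\eps^2$-bound.
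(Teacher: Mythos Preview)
Your bulk estimates match the paper's approach exactly: exploit the bilinear structure of each term in $\Tilde G$, pair one factor bounded via \autoref{lem:further ests global wp} or \autoref{lem:ests of trafo global wp} with a second $\eps$-small factor, and use the Banach algebra property of $\rH^{1,q}(\Omega)$ (respectively of $\rL^\infty_\tau(\rH^{1,q}_x)$) to close.

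For the time-regularity half of the boundary estimate the paper takes a slightly different route. Instead of reusing \autoref{lem:est of bilin term on boundary} (which produces the harmless but unneeded factor $\tau^\delta$), it invokes the H\"older-in-time bound $\|Z-\Id_3\|_{\rC^{0,\nicefrac{1}{p'}}_\tau(\rH^{1,q}_x)}\le C\eps$ from \autoref{lem:ests of trafo global wp}(iv) together with a paraproduct estimate (\cite[Thm.~2.8.2(ii)]{Tri:10}) to multiply $\del_k\phi|_G\in\rF^{\nicefrac{1}{2}-\nicefrac{1}{2q}}_{p,q,\tau}(\rL^q(G))$ against $(\Id_3-Z)|_G\in\rC^{0,\nicefrac{1}{p'}}_\tau(\rW^{1-\nicefrac{1}{q},q}(G))$. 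Your approach via \autoref{lem:est of bilin term on boundary} is also legitimate here, since $\tau$ is fixed and $\|Z-\Id_3\|_{\rH^{1,p}_\tau(\rH^{1,q}_x)}\le C\eps$ follows from $\partial_t Z=-Z(\nabla\tu)Z$; the paper's choice simply avoids a $\tau$-dependent prefactor and does not need the bilinear lemma's hypothesis $g(0)=0$.

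One conceptual correction to your closing paragraph: you write that $(\tq_\rv)^+$ and $(\tq_\rc)^+$ are ``built into the linear operator $A$ of \autoref{subsec:lin NS M}''. They are not---$A$ remains genuinely linear, and $(\cdot)^+$ is not. These switch terms are instead moved to the left-hand side of the auxiliary \emph{nonlinear} problem \eqref{eq:Lagrangian coords global proof} and handled by the separate energy argument of \autoref{prop: est on q}. For the present proposition all that matters is that they have been removed from $\Tilde G$, which you identified correctly.
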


\begin{proof}
Proceeding as in the proof of \autoref{lem: estimate non lin}, we give here an idea of the procedure for the estimates. 
Let us start with the term $G_{\rd}$.  
From the Banach algebra structure of $\rL^\infty(0,\tau;\rH^{1,q}(\Omega))$, \autoref{lem:further ests global wp}(i) for the estimate $\trho_{\rd}$ as well as $\tz \in \mathbb{B_{\eps,\tau}}$, it first follows that 
\begin{equation*}
    \|\trho_\rd \mdiv\tu\|_{\rLp_{\tau}(\rH^{1,q}_x)}\leq \|\trho_{\rd}\|_{\rL^\infty_\tau (\rH^{1,q}_x)}\|\mdiv\tu\|_{\rL^p_\tau(\rH^{1,q}_x)}\leq C\eps^{2}.
\end{equation*}
Similarly, additionally invoking the boundedness of the constant $\bar{\rho}_\rd$, and using \autoref{lem:ests of trafo global wp}(ii) for an estimate of $Z^\top- \mathrm{Id}_3$, we obtain
\begin{equation*}
    \begin{aligned}
        \|(\trho_{\rd}+\bar{\rho}_\rd)\nabla\tu:[Z^\top-\mathrm{Id}_3]\|_{\rL^p_\tau(\rH^{1,q}_x)}
        \leq \|\trho_{\rd}+\bar{\rho}_\rd\|_{\rL^\infty_\tau(\rH^{1,q}_x)} \cdot \|\nabla\tu\|_{\rL^p_\tau(\rH^{1,q}_x)} \cdot \|Z^\top-\mathrm{Id}_3\|_{\rL^\infty_\tau(\rH^{1,q}_x)}\leq C\eps^{2}.
    \end{aligned}
\end{equation*}
The next nonlinear term under consideration is $G_u$. 
For the estimate of the first term of $G_u$, we make use of \autoref{lem:further ests global wp}(i) in order to estimate $\trho_{\rd}$ by $\eps$ and exploit~$\tz \in \mathbb{B_{\eps,\tau}}$ for $\dt \tu$ to derive that
\begin{equation*}
   \left\|\frac{\trho_\rd}{\bar{\rho}_\rd}\del_t\tu\right\|_{\rL^p_\tau(\rL^{q}_x)}\leq \left\|\frac{\trho_{\rd}}{\bar{\rho}_\rd}\right\|_{\rL^\infty_\tau(\rH^{1,q}_x)} \cdot \|\partial_t\tu\|_{\rL^p_\tau(\rL^{q}_x)}\leq C\eps^2.
\end{equation*}
Next, noticing again that $\tz \in \mathbb{B_{\eps,\tau}}$, and employing \autoref{lem:ests of trafo global wp}(ii) for estimating the terms $Z- \mathrm{Id}_3$ and~$Z$, we have 
\begin{equation*}
    \begin{aligned}
        \left\| \frac{\mu}{ 2\bar{\rho}_\rd}\sum_{j,k,l=1}^3\frac{\partial^{2}\tu_{i}}{\partial{y_{k}}\partial{y_{l}}}(Z_{k,j}-\delta_{k,j})Z_{l,j}  \right\|_{\rL_\tau^{p}(\rL^{q}_x)}
        \leq C \|\tu\|_{\rL^p_\tau(\rH^{2,q}_x)}\cdot\|Z-\mathrm{Id}_3\|_{\rL^{\infty}_\tau(\rH_x^{1,q})} \cdot \|Z\|_{\rL^{\infty}_\tau (\rH_x^{1,q})}\leq C\eps^2.
    \end{aligned}
\end{equation*}
Let us observe that the term $\sum_{j,k,l=1}^3\frac{\partial\tu_{i}}{\partial{y_{l}}} \frac{\partial Z_{l,j}}{\partial{y_{k}}}Z_{k,j}
$ as well as the other terms of a similar shape in $G_u$ can be handled in the exact same way.

The next term in $G_u$ to be estimated is $(\tT + \bar{T} )(-Z^\top (\nabla \trho_\rd) \tq_\rv )$. 
Note that $\tT+\bar{T}$ can be bounded, and $\nabla \trho_{\rd}$ can be estimated by $\eps$ by virtue of \autoref{lem:further ests global wp}(iii) and~(i), respectively, while the factor $Z^\top$ can be bounded by \autoref{lem:ests of trafo global wp}(ii).
On the other hand, with regard to $\tq_\rv$, we recall that $\tz \in \mathbb{B_{\eps,\tau}}$, so a concatenation of the previous arguments and the embedding $\rH^{1,q}(\Omega) \hookrightarrow \rL^\infty(\Omega)$ lead to
\begin{equation*}
    \begin{aligned}
        \|(\tT + \bar{T} )(-Z^\top (\nabla \trho_\rd) \tq_\rv )\|_{\rL_\tau^{p}(\rL^{q}_x)}
        \leq \|\tT + \bar{T}\|_{\rL_\tau^{\infty}(\rH^{1,q}_x)} \cdot \|Z^\top\|_{\rL_\tau^{\infty}(\rH^{1,q}_x)} \cdot \|\nabla \trho_\rd\|_{\rL_\tau^{\infty}(\rL^{q}_x)} \cdot \|\tq_\rv\|_{\rL_\tau^{p}(\rH^{2,q}_x)}\le C\eps^2.
    \end{aligned}
\end{equation*}
The last term of $G_u$ to be taken into account is $\frac{\trho_\rd + \bar{\rho}_\rd}{2\bar{\rho}_\rd}\tq_\rr \sum_{l=1}^3\frac{\del \tu_i}{\del y_l}Z_{l,3}$.  
By \autoref{lem:ests of trafo global wp}(ii), \autoref{lem:further ests global wp}(i) as well as~(iv), we have 
\begin{equation*}
    \begin{aligned}
        \left\| - \frac{\trho_\rd + \bar{\rho}_\rd}{2\bar{\rho}_\rd}\tq_\rr \sum_{l=1}^3\frac{\del \tu_i}{\del y_l}Z_{l,3}\right\|_{\rL_\tau^{p}(\rL^{q}_x)}
        \leq ~\left\|\frac{\trho_\rd + \bar{\rho}_\rd}{2\bar{\rho}_\rd}\right\|_{\rL_\tau^{\infty}(\rH^{1,q}_x)} \cdot \|\tq_\rr\|_{\rL_\tau^{\infty}(\rH^{1,q}_x)} \cdot \|\tu\|_{\rL_\tau^{p}(\rH^{2,q}_x)} \cdot \|Z\|_{\rL_\tau^{\infty}(\rH^{1,q}_x)}\leq C\eps^2.
    \end{aligned}
\end{equation*}
In the same way, this time employing \autoref{lem:further ests global wp}(iv) and $\tz  \in \mathbb{B_{\eps,\tau}}$ for $\tT$, we obtain
 \begin{equation*}
    \left\| \frac{\tq_\rv + \tq_\rc + \tq_\rr}{2}\del_t\tT\right\|_{\rL_\tau^{p}(\rL^{q}_x)}\leq\left\|\frac{\tq_\rv + \tq_\rc + \tq_\rr}{2}\right\|_{\rL_\tau^{\infty}(\rH^{1,q}_x)} \cdot \|\del_t\tT\|_{\rL_\tau^{p}(\rL^{q}_x)}\leq C\eps^2.
 \end{equation*}
Making use of the embeddings $\E_{1,T,\tau}\hookrightarrow \rL^\infty(\Omega \times (0,\tau))$ and $\E_{1,\rv,\tau} \hookrightarrow \rL^\infty(\Omega \times (0,\tau))$, see also the proof of \autoref{lem:further ests global wp}, and invoking \autoref{lem:further ests global wp}(iv) as well as $\tz  \in \mathbb{B_{\eps,\tau}}$ for $\tq_\rr$, we derive the estimate
 \begin{equation*}\begin{aligned}
    & \quad \left\| ( \tT + \bar{T} +1 ) (\tT +\bar{T}) \frac{1+\tq_\rv}{2 + \tq_\rv + \tq_\rc + \tq_\rr} ( \tq_\rv)^+ \tq_\rr \right\|_{\rL_\tau^{p}(\rL^{q}_x)}\\
    &\leq\|\tT + \bar{T} +1 \|_{\rL_\tau^{\infty}(\rL^{\infty}_x)} \cdot \|\tT +\bar{T}\|_{\rL_\tau^{\infty}(\rL^{\infty}_x)}\cdot \left\|\frac{1+\tq_\rv}{2 + \tq_\rv + \tq_\rc + \tq_\rr}\right\|_{\rL_\tau^{\infty}(\rL^{\infty}_x)}\cdot \|\tq_\rv\|_{\rL_\tau^{\infty}(\rH^{1,q}_x)} \cdot \|\tq_\rr\|_{\rL_\tau^{p}(\rL^{q}_x)} \le C\eps^2.
 \end{aligned}
 \end{equation*}
Next, we estimate the term $\tV_\rr\sum_{l=1}^3 \frac{\del \tq_\rr}{\del y_{l}}Z_{l,3}
        $. 
Employing that $\tz \in \mathbb{B_{\eps,\tau}}$ for $\tq_\rr$, and invoking the assumption on $\tV_\rr$, we find that 
 \begin{equation*}
     \left\|\tV_\rr\sum_{l=1}^3 \frac{\del \tq_\rr}{\del y_{l}}Z_{l,3}\right\|_{\rL_\tau^{p}(\rL^{q}_x)}\leq\| \tV_\rr\|_{\rL_\tau^{\infty}(\rL^{\infty}_x)}\cdot \|Z_{l,3}\|_{\rL_\tau^{\infty}(\rL^{\infty}_x)}\cdot\|\tq_\rr\|_{\rL_\tau^{p}(\rH^{1,q}_x)}\leq C\eps^{2}.
\end{equation*}
We can deal with $\tq_\rr\sum_{l=1}^3 \frac{\del \tV_\rr}{\del y_{l}} Z_{l,3}$ in the same way.\\
Let us observe that the other terms appearing in $G_u,\tilde{G}_T, \tilde{G}_\rv, \tilde{G}_\rr, \tilde{G}_\rc$ can be estimated in the same way as above, and it thus remains to estimate the boundary terms.

For the boundary terms, we first use standard trace theory together with the Banach algebra structure of $\rL^\infty(0,\tau;\rW^{1,q}(\Omega))$ by $q > 3$, \autoref{lem:ests of trafo global wp}(ii) to estimate $\Id_3 - Z$ and $\tz \in \mathbb{B}_{\eps,\tau}$ for $\tT$ to derive that
\begin{equation*}
    \begin{aligned}
        \| (\Id_3 - Z) \nabla \tT \|_{\rL_\tau^p(\rW^{1-\nicefrac{1}{q},q}(G))} 
        &\le C \cdot \| (\Id_3 - Z^\top) \nabla \tT \|_{\rL_\tau^p(\rH_x^{1,q})}\\
        &\le C \cdot \| (\Id_3 - Z^\top) \|_{\rL_\tau^\infty(\rH_x^{1,q})} \cdot \| \nabla \tT \|_{\rL_\tau^p(\rH_x^{1,q})} \le C \eps^2.
    \end{aligned}
\end{equation*}
This also yields that $\| B_{T,1} \|_{\rL_\tau^p(\rW^{1-\nicefrac{1}{q},q}(G))} \le C \eps^2$, and likewise for $B_{T,2}$.
On the other hand, by~$\tT \in \E_{1,T,\tau}$, we deduce similarly as in \cite[Prop.~6.4]{DHP:07} that 
\begin{equation*}
    \left\| \left.\del_{k} \tT \right|_{G} \right\|_{\rF_{p,q,\tau}^{\nicefrac{1}{2}-\nicefrac{1}{2q}}(\rL^q(G))} \le C \cdot \| \tT \|_{\E_{1,T,\tau}} \le C \eps.
\end{equation*}
From standard trace theory together with \autoref{lem:ests of trafo global wp}(iv), it further follows that
\begin{equation*}
    \left\| \left.(\Id_3 - Z) \right|_{G} \right\|_{\rC_\tau^{0,\nicefrac{1}{p'}}(\rW^{1-\nicefrac{1}{q},q}(G))} \le C \cdot \left\| Z^\top - \Id_3 \right\|_{\rC_\tau^{0,\nicefrac{1}{p'}}(\rH_x^{1,q})} \le C \eps.
\end{equation*}
Making use of a paraproduct estimate, see \cite[Thm.~2.8.2(ii)]{Tri:10}, we find that
\begin{equation*}
    \begin{aligned}
        \| B_{T,1} \|_{\rF_{p,q,\tau}^{\nicefrac{1}{2}-\nicefrac{1}{2q}}(\rL^q(G))}
        \le C \cdot \left\| \left.\del_{k} \tT \right|_{G} \right\|_{\rF_{p,q,\tau}^{\nicefrac{1}{2}-\nicefrac{1}{2q}}(\rL^q(G))} \cdot \left\| \left.(\Id_3 - Z) \right|_{G} \right\|_{\rC_\tau^{0,\nicefrac{1}{p'}}(\rW^{1-\nicefrac{1}{q},q}(G))} \le C \eps^2.
    \end{aligned}
\end{equation*}
Note that $B_{T,2}$ can be estimated in the same way.
The proof of this proposition follows upon noting that the other boundary terms can also be estimated by the same procedure.
\end{proof}

Finally, in order to deal with the nonlinear terms accounting for the phase transitions, we consider the following auxiliary problem for some given time $\tau>0$:
\begin{equation}\label{eq: auxillary for q}
    \left\{
    \begin{aligned}
        \dt Q - \Delta Q
        &= f -Q^+, &&\tin (0,\tau) \times \Omega,\\
        \dz Q
        &= g_u, &&\ton (0,\tau) \times \Gau,\\
       \dz Q
        &= g_l, &&\ton (0,\tau) \times \Gal,\\
        Q(0)
        &= Q_0, &&\tin \Omega,
    \end{aligned}
    \right.
\end{equation}
and assume that $Q \in \E_{1,h,\tau}$ is a local, maximal solution for given data $(f,g_u,g_l,Q_0)$. 
In the following, we derive a priori bounds for the solution $Q$ depending on the size of the data as made precise below.

\begin{prop}
    \label{prop: est on q}
    Let $\tau >0$ and $p,q \in [2,\infty)$. Assume that for $i \in \{ u,l\}$, the data satisfies
    \begin{equation}\label{eq: assu q}
        \begin{aligned}
            &f \in \E_{0,h,\tau}, \ g_i \in \F_{h,\tau}   \tand Q_0 \in \rB_{q,p,\per}^{2(1-\nicefrac{1}{p})}(\Omega) \twith\\
            &\| f \|_{\E_{0,h,\tau}} + \| g_i \|_{\F_{h,\tau}} +\|Q_0 \|_{\rB^{2(1-\nicefrac{1}{p})}_{q,p}(\Omega) } \leq C_0\eps^2 ,
        \end{aligned}
    \end{equation}
    for some $\eps>0$ and $C_0>0$. 
    Then there is a constant $C>0$ with $\| Q \|_{\E_{1,h,\tau}} \leq C \eps^2$.
\end{prop}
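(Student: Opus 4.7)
The plan is to combine the optimal $\rLp$-regularity result for the Neumann heat equation from \autoref{lem:lin opt data}(iii) with an energy estimate that exploits the dissipative structure of the term $-Q^+$.

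First, I would regard $f - Q^+$ as a forcing term and apply \autoref{lem:lin opt data}(iii) directly to the auxiliary problem \eqref{eq: auxillary for q}, obtaining
\[
\|Q\|_{\E_{1,h,\tau}} \leq C\bigl(\|f\|_{\E_{0,h,\tau}} + \|Q^+\|_{\E_{0,h,\tau}} + \|g_u\|_{\F_{h,\tau}} + \|g_l\|_{\F_{h,\tau}} + \|Q_0\|_{\rB^{2(1-\nicefrac{1}{p})}_{q,p}(\Omega)}\bigr).
\]
By assumption \eqref{eq: assu q}, every term on the right-hand side except $\|Q^+\|_{\E_{0,h,\tau}}$ is already bounded by $C\eps^2$, so the entire task reduces to establishing the a priori bound $\|Q^+\|_{\rLp(0,\tau;\rLq(\Omega))} \leq C\eps^2$.

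Second, I would carry out the energy estimate by testing \eqref{eq: auxillary for q} against $(Q^+)^{q-1}$ and integrating over $\Omega$. Since $\nabla Q = \nabla Q^+$ on $\{Q>0\}$ and since the algebraic identity $-Q^+\cdot(Q^+)^{q-1} = -(Q^+)^q$ provides the key dissipation, this produces
\[
\tfrac{1}{q}\tfrac{d}{dt}\|Q^+\|_{\rLq(\Omega)}^q + (q-1)\int_\Omega (Q^+)^{q-2}|\nabla Q^+|^2 \srd y + \|Q^+\|_{\rLq(\Omega)}^q = \int_\Omega f\,(Q^+)^{q-1} \srd y + \sum_{i=1,2}\int_{G} g_{h,i}\, (Q^+)^{q-1} \srd S.
\]
Handling the right-hand side with H\"older and Young and bounding the boundary integral via trace theory together with \autoref{lem:est of normal derivative} (absorbing gradient terms into the dissipative part of the left-hand side), one arrives at a differential inequality of the form
\[
\tfrac{d}{dt}\|Q^+\|_{\rLq(\Omega)}^q + c\,\|Q^+\|_{\rLq(\Omega)}^q \leq C\bigl(\|f\|_{\rLq(\Omega)}^q + \|g_{h,1}\|_{\rLq(G)}^q + \|g_{h,2}\|_{\rLq(G)}^q\bigr).
\]

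Third, Duhamel integration of this ODE yields $\|Q^+(t)\|_{\rLq(\Omega)}^q \leq e^{-ct}\|Q_0^+\|_{\rLq(\Omega)}^q + (e^{-c\,\cdot} \ast h)(t)$, where $h(s)$ is the right-hand side above. Raising this pointwise estimate to the power $p/q$ and integrating in time, then invoking Young's convolution inequality with the $\rL^1$-kernel $e^{-c\,\cdot}$, yields
\[
\|Q^+\|_{\rLp(0,\tau;\rLq(\Omega))}^p \leq C\bigl(\|Q_0\|_{\rLq(\Omega)}^p + \|f\|_{\E_{0,h,\tau}}^p + \|g_{h,1}\|_{\F_{h,\tau}}^p + \|g_{h,2}\|_{\F_{h,\tau}}^p\bigr) \leq C\eps^{2p},
\]
after estimating $\|g_{h,i}\|_{\rLp(0,\tau;\rLq(G))}$ by $\|g_{h,i}\|_{\F_{h,\tau}}$ through the corresponding Sobolev embedding in time. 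Substituting this bound into the first step closes the estimate and completes the proof.

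The chief obstacle is the energy estimate itself, since the boundary data $g_{h,i}$ naturally belong to the Triebel-Lizorkin trace space $\F_{h,\tau}$ rather than to $\rLp(0,\tau;\rLq(G))$, so the boundary integral $\int_G g_{h,i} (Q^+)^{q-1} \srd S$ requires a delicate combination of trace inequalities, embedding of $\F_{h,\tau}$ into a suitable $\rLp\rLq$-type space on $\partial\Omega$, and absorption of gradient factors into the Dirichlet-type dissipation; alternatively, one may first lift the boundary data to an element of $\E_{1,h,\tau}$ and reduce to the homogeneous Neumann setting at the price of reabsorbing extra interior source terms. A second subtle point is reconciling the $\rLq$-in-space structure produced by the energy method with the $\rLp$-in-time structure required by the maximal regularity space, which is exactly what the Duhamel/Young convolution step in the third paragraph is designed to accomplish on the finite interval $(0,\tau)$.
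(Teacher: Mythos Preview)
Your proposal is correct and follows the same two-step strategy as the paper: maximal regularity reduces the task to bounding $\|Q^+\|_{\rLp_\tau(\rLq_x)}$, and this bound is obtained via an $\rLq$-energy estimate. The paper's execution differs in a few technical choices. It tests \eqref{eq: auxillary for q} against $Q|Q|^{q-2}$ rather than $(Q^+)^{q-1}$, so it controls all of $Q$ in $\rL^\infty_\tau(\rLq_x)$; the term $-Q^+\cdot Q|Q|^{q-2}$ is crudely bounded by $\|Q\|_q^q$ and the sign structure you exploit is simply not used. After the differential inequality, the paper applies Gronwall on $(0,\tau)$ to get $\|Q\|_{\rL^\infty_\tau(\rLq_x)}\le C\eps^2$ and then invokes the finite-interval embedding $\rL^\infty(0,\tau)\hookrightarrow\rLp(0,\tau)$, whereas you use Duhamel and Young's convolution inequality; on a bounded time interval both routes give the same result, and the exponential decay you extract from the dissipative term buys nothing additional here. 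One small correction: \autoref{lem:est of normal derivative} is not the relevant tool for the boundary integral; the paper instead bounds $\||Q|^{q/2}\|_{\rL^2(G)}^2$ by $C\bigl(\|\nabla|Q|^{q/2}\|_2^2+\|Q\|_q^q\bigr)$ via the trace theorem and interpolation, then absorbs the gradient piece into the left-hand side, which is precisely the mechanism you describe in words.
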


\begin{proof}
For brevity, for $q \in [1,\infty]$, we also simply denote the spatial $q$-norm by~$\| \cdot \|_q$.
We multiply \eqref{eq: auxillary for q} by $Q|Q|^{q-2}$ and integrate in space to obtain
\begin{equation*}
    \begin{aligned}
        \dt \| Q \|^q_q + \| \nabla | Q |^{\frac{q}{2}} \|^2_2 + \| |\nabla Q|  | Q|^{\frac{q-2}{2}} \|^2_2
        \leq C   \Bigl ( \int_\Omega \bigl (f - Q^+ \bigr )\cdot Q |Q|^{q-2} \srd x  + \int_G (g_u -g_l) \cdot Q |Q|^{q-2} \srd S\Bigr )
    \end{aligned}
\end{equation*}
for some constant $C>0$. 
In view of H\"older's and Young's inequality, we get
\begin{equation*}
    \Bigl | \int_\Omega (f - Q^+)\cdot Q |Q|^{q-2} \srd x \Bigr | \leq C \bigl ( \| f \|_q \cdot  \| Q \|_q^{q-1} + \| Q \|^q_q \bigr ) \leq C \bigl ( \| f\|^q_q + \| Q\|^q_q \bigr).
\end{equation*}
Concerning the boundary terms, we observe that similar arguments yield
\begin{equation*}
    \begin{aligned}
        \Bigl | \int_G (g_u -g_l) \cdot Q |Q|^{q-2} \srd S\Bigr | 
        \leq C \cdot \| g_u -g_l \|_{\rLq(G)}^q + \eps \cdot \| Q \|^q_{\rLq(G)}
        \leq C \cdot \| g_u -g_l \|^q_{\rB_{q,q}^{1 - \nicefrac{1}{q}}(G)} + \eps \cdot \| |Q|^{\frac{q}{2}} \|^2_{\rL^2(G)}.
    \end{aligned}
\end{equation*}
The trace theorem, interpolation and Young's inequality then imply
\begin{equation*}
    \begin{aligned}
        \| |Q|^{\frac{q}{2}} \|^2_{\rL^2(G)} \leq C \| |Q|^{\frac{q}{2}} \|^2_{\rW^{\nicefrac{1}{2},2}(\Omega)} 
        \leq C \bigl (\| |Q|^{\frac{q}{2}} \|^2_{\rH^{1}(\Omega)} + \||Q|^{\frac{q}{2}} \|^2_2 \bigr )
        \leq C\bigl ( \|\nabla |Q|^{\frac{q}{2}} \|^2_2 + \| Q\|^q_q \bigr ). 
    \end{aligned}
\end{equation*}
Hence, after absorbing, we obtain 
\begin{equation*}
    \dt \| Q \|^q_q \leq C \Bigl ( \| f\|^q_q + \| g_u -g_l \|^q_{\rB_{q,q}^{1 - \nicefrac{1}{q}}(G)}+ \| Q\|^q_q \Bigr). 
\end{equation*}
An integration in time then leads to
\begin{equation*}
    \| Q \|^q_q \leq C \bigl (\| f \|^q_{\rLq_\tau(\rL_x^q)} +\| g_u -g_l \|^q_{\rLq_\tau(\rB_{q,q}^{1 - \nicefrac{1}{q}}(G))}\bigr )+ C \int_0^t \| Q \|^q_q \ \mathrm{d}s + \| Q_0 \|^q_q.
\end{equation*}
By Gronwall's inequality, we conclude that
\begin{equation*}
    \| Q \|^q_q \leq \Bigl ( C  \| f \|^q_{\rLq_\tau(\rL_x^q)} +C\| g_u -g_l \|^q_{\rLq_\tau(\rB_{q,q}^{1 - \nicefrac{1}{q}}(G))} + \| Q_0 \|^q_q \Bigr ) \mathrm{e}^{C\tau}.
\end{equation*}
By Sobolev embeddings, we get $\rB^{2(1-\nicefrac{1}{p})}_{q,p,\per}(\Omega) \hookrightarrow \rLq(\Omega)$, so $\| Q\|_{\rL^\infty_\tau(\rL_x^q)} \leq C \eps^2$ by the assumptions on the data.
In particular, for all $q \geq 2$, we have 
\begin{equation*}
     \| \nabla | Q |^{\frac{q}{2}} \|^{\nicefrac{q}{2}}_2 + \| |\nabla Q|  | Q|^{\frac{q-2}{2}} \|^{\nicefrac{q}{2}}_2 \leq C \eps^2.
\end{equation*}
By the maximal $\rLp$-regularity on $\rX_0$, there exists a constant $C>0$ such that the local solution $Q \in \E_{1,h,\tau}$ to~\eqref{eq: auxillary for q} can be estimated by 
   \begin{equation*}
       \| Q \|_{\E_{1,h,\tau}} \leq C \bigl ( \| f \|_{\E_{0,h,\tau}} + \| Q^+\|_{\E_{0,h,\tau}}
       + \| g_u \|_{\F_{h,\tau}} +\| g_l \|_{\F_{h,\tau}} + \| Q_0 \|_{\rB_{q,p}^{2(1-\nicefrac{1}{p})}(\Omega)} \bigr ).
   \end{equation*}
The previous step joint with the embedding $\rL^\infty(0,\tau) \hookrightarrow \rLp(0,\tau)$ then yields
\begin{equation*}
    \| Q^+\|_{\E_{0,h,\tau}} \leq C \cdot \| Q \|_{\rL^\infty(0,\tau;\rLq(\Omega)) } \leq C\eps^2.
\end{equation*}
The claimed estimate then follows from the assumptions on the data.
\end{proof}

We are now in the position to prove the global existence of strong solutions for small data. 

\begin{proof}{(Proof of \autoref{thm: global WP}).}
Let $\tau>0$, $(f_\rd, f_u, f_T,f_\rv,f_\rc,f_\rr) \in \E_{0,\tau}$ and $(g_{T,i},g_{\rv,i},g_{\rc,i},g_{\rr,i}) \in (\F_{h,\tau})^4$ for $i \in \{1,2\}$ . 
Consider the system \eqref{eq:Lagrangian coords global proof}  supplemented with the initial data \eqref{eq: initial data global}. 
Then by \autoref{prop: max reg A} and similar arguments as in \autoref{sec: local}, there is a unique, local, maximal strong solution $\tz = (\trho_\rd,\tu,\tT,\tq_\rv,\tq_\rc,\tq_\rr) \in \E_{1,a_\mathrm{max}}$ of \eqref{eq:Lagrangian coords global proof}, where $a_{\mathrm{max}}>0$ denotes the maximal time of existence. 
If $a_{\mathrm{max}} < \tau$, i.e., the solution does not exist globally, this time $a_{\mathrm{max}}$ is characterized by the condition
\begin{equation*}
    \lim\limits_{t \to a_{\mathrm{max}}} \| (\trho_\rd,\tu,\tT,\tq_\rv,\tq_\rc,\tq_\rr) \|_{\E_{1,t}} = \infty.
\end{equation*}
Now for $\tz_1 \in \mathbb{B}_{\eps,a_{\mathrm{max}}} $, consider \eqref{eq:Lagrangian coords global proof} with $f_{\rd} = G_\rd(\tz_1)$, $f_{u} = G_u(\tz_1)$, $f_T = \Tilde{G}_T(\tz_1)$, $f_\rv = \Tilde{G}_\rv(\tz_1)$, $f_\rc = \Tilde{G}_{\rc}(\tz_1)$, $f_\rr = \Tilde{G}_\rr(\tz_1)$ and boundary terms 
\begin{equation*}
    g_{T,i} = B_{T,i}(\tz_1), \enspace g_{\rv,i} = B_{\rv,i}(\tz_1), \enspace g_{\rc,i} = B_{\rc,i}(\tz_1) \tand g_{\rr,i} = B_{\rr,i}(\tz_1), \tfor i \in \{1,2\}.
\end{equation*}
For the solution map 
\begin{equation*}
    \Phi : \mathbb{B}_{\eps,a_{\mathrm{max}}} \to \mathbb{B}_{\eps,a_{\mathrm{max}}}, \enspace \tz_1=(\trho_{\rd,1},\tu_1,\tT_1,\tq_{\rv,1},\tq_{\rc,1},\tq_{\rr,1})  \mapsto \tz =(\trho_\rd,\tu,\tT,\tq_\rv,\tq_\rc,\tq_\rr),
\end{equation*} 
we show in the following that $\Phi$ is well-defined, a self-map and a contraction, given that $\eps>0$ is sufficiently small. 
By \autoref{prop: nonlin est global}, it holds that  
\begin{equation*}
    \| \Tilde{G}(\tz_1) \|_{\E_{0,a_{\mathrm{max}}}} + \sum_{i=1}^2 \| (B_{T,i}(\tz_1), B_{\rv,i}(\tz_1),B_{\rc,i}(\tz_1), B_{\rr,i}(\tz_1)) \|_{\F_{a_\mathrm{max}}} \leq C\eps^2.
\end{equation*}
Combining the maximal $\rLp$-regularity on $\rX_0$ with \autoref{prop: est on q}, for all $t \in [0,a_{\mathrm{max}}]$, we then obtain
\begin{equation*}
    \begin{aligned}
        \| (\trho_\rd,\tu,\tT,\tq_\rv,\tq_\rc,\tq_\rr) \|_{\E_{1,t}}
        \leq C \eps^2 + \| ( \trho_{\rd,0}- \bar{\rho}_\rd, \tu_0, \tT_0 - \bar{T}, \tq_{\rv,0}, \tq_{\rc,0}-1, \tq_\rr ) \|_{\rX_\gamma} \leq C\eps^2,
    \end{aligned}
\end{equation*}
where we also used the assumptions on the initial data in the last inequality. 
Now, choosing $\eps>0$ sufficiently small yields that 
\begin{equation*}
    \| (\trho_\rd,\tu,\tT,\tq_\rv,\tq_\rc,\tq_\rr) \|_{\E_{1,t}} \leq \eps \enspace \text{for all} \enspace t \in[0,a_{\mathrm{max}}],
\end{equation*}
which shows that $\Phi$ is well-defined and a self-map. The contraction property can be shown in the same way. 
Finally, since $\eps$ may be chosen sufficiently small, the transform $X(t,\cdot)$ is a $\rC^1$-diffeomorphism, see \autoref{lem:ests of trafo global wp}, from $\Omega$ into $\Omega$ and we define 
\begin{equation*}
    \begin{aligned}
        \rho_\rd(t,x)
        &= \trho_\rd(t,Y(t,x))+ \bar{\rho}_\rd, \enspace u(t,x) = \tu(t,Y(t,x)), \enspace T(t,x) =T(t,Y(t,x)) + \bar{T} ,\\
        q_\rv(t,x) &= \tq_\rv(t,Y(t,x)), \enspace q_\rc(t,x) = \tq_\rc(t,Y(t,x)) +1 , \enspace q_\rr(t,x) = q_\rr(t,Y(t,x)).
    \end{aligned}
\end{equation*}
Then $(\rho_\rd,u,T,q_\rv, q_\rc,q_\rr)$ is a global, strong solution to system \eqref{eq:coupled moisture compr NSE detailed} with $(\rho_\rd,u,T,q_\rv, q_\rc,q_\rr)\in \E_{1,\tau}$.
\end{proof}

\medskip 

{\bf Acknowledgements.}
Felix Brandt would like to thank the German National Academy of Sciences Leopoldina for support via the Leopoldina Fellowship Program with grant number LPDS~2024-07.
Matthias Hieber and Tarek Z\"ochling acknowledge the support by DFG project FOR~5528.
Moreover, the authors would like to thank Rupert Klein for fruitful discussions.
They are also grateful to the anonymous referee for the careful reading of the manuscript and valuable comments.

\end{document}